\newtheorem{theorem}{Theorem}[section]
\newtheorem{lemma}[theorem]{Lemma}
\newtheorem{proposition}[theorem]{Proposition}
\newtheorem{corollary}[theorem]{Corollary}
\newtheorem{mainthm}{Theorem} 
\newtheorem{lemma*}[mainthm]{Lemma}
\newtheorem{proposition*}[mainthm]{Proposition}
\theoremstyle{definition}
\newtheorem{definition}[theorem]{Definition}
\newtheorem{notation}[theorem]{Notation}
\newtheorem{question}[theorem]{Question}
\newtheorem{remark}[theorem]{Remark}
\newtheorem{example}[theorem]{Example}
\newtheorem{remark*}[mainthm]{Remark}
\newtheorem{definition*}[mainthm]{Definition}
\newcommand{\Spec}{\operatorname{Spec}}
\newcommand{\Spa}{\operatorname{Spa}}
\newcommand{\Hom}{\operatorname{Hom}}
\newcommand{\id}{{\operatorname{id}}}
\newcommand{\an}{\mathrm{an}}
\newcommand{\ad}{\mathrm{ad}}
\newcommand{\perf}{{\operatorname{perf}}}
\newcommand{\hotimes}{\hat{\otimes}}
\newcommand{\et}{\operatorname{\acute{e}t}}
\newcommand{\fet}{\operatorname{f\acute{e}t}}
\newcommand{\proet}{\operatorname{pro\acute{e}t}}
\newcommand{\profet}{\operatorname{prof\acute{e}t}}
\renewcommand{\O}{\mathcal{O}}
\newcommand{\N}{\mathbb{N}}
\newcommand{\Z}{\mathbb{Z}}
\newcommand{\Q}{\mathbb{Q}}
\newcommand{\C}{\mathbb{C}}
\newcommand{\F}{\mathbb{F}}
\title[perfectoid covers of abelian varieties]{perfectoid  covers of abelian varieties} 
\author{
	Clifford Blakestad \and
	Dami\'an Gvirtz \and
	Ben Heuer \and 
	Daria Shchedrina \and
	Koji Shimizu \and 
	Peter Wear \and
	Zijian Yao}
\begin{document}
	
	\maketitle
	
	\begin{abstract}
For an abelian variety $A$ over an algebraically closed non-archimedean field of residue characteristic $p$, we show that there exists a perfectoid space which is the tilde-limit of $\varprojlim_{[p]}A$. Our proof also works for the larger class of abeloid varieties.
	\end{abstract}

	\section{Introduction} 

Let $p$ be a prime and let $K$ be an algebraically closed non-archimedean field of residue characteristic $p$.
For an abelian variety $A$ over $K$ we consider the inverse system of $A$ under the $p$-multiplication morphism:
\[\cdots\xrightarrow{[p]}A\xrightarrow{[p]}A\xrightarrow{[p]}A.\]
Via the adic analytification functor, we may see this as an inverse system of analytic adic spaces over $\operatorname{Spa}(K,\mathcal O_K)$, where $\mathcal O_K$ is the ring of integers of $K$.
The primary goal of this article is to show that the ``inverse limit'' of this tower exists in some way and is a perfectoid space: Since inverse limits rarely exist in the category of adic spaces, in \cite{huber2013etale} Huber introduced the weaker notion of tilde-limits to remedy this problem. This is the notion of ``limits" we are going to use. More precisely, we prove the following slightly more general result:


\begin{mainthm} \label{thm:main_thm_intro}
	Let $A$ be an abeloid variety over $K$, for instance an abelian variety seen as a rigid space. Then there is a unique perfectoid space $A_\infty$ over $K$ such that
	$A_\infty \sim \varprojlim_{[p]} A$ is a tilde-limit.
\end{mainthm}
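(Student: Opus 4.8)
The plan is to reduce the statement to a local, affinoid computation, perform the limit construction on integral models, and then glue. First I would recall the criterion for recognizing a tilde-limit: by Huber's theory (and the reformulation in Scholze–Weinstein), it suffices to exhibit a perfectoid space $A_\infty$ together with a compatible family of maps $A_\infty \to A$ over the tower such that $|A_\infty| \xrightarrow{\sim} \varprojlim |A|$ as topological spaces and such that, for a cofinal system of affinoid opens, the corresponding ring is the completion of the colimit of the rings at finite level. Uniqueness is then automatic once existence is established, since a tilde-limit is uniquely determined by its universal property up to unique isomorphism. So the entire content is existence.

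The key steps, in order: (1) Since the question is local on the base and the transition maps $[p]\colon A \to A$ are finite étale away from the need to understand the torsion — actually $[p]$ is finite flat of degree $p^{2g}$ and, over an algebraically closed field of residue characteristic $p$, after passing to a suitable integral model it is not étale — I would instead work with a formal model. Using the semistable (or rather, the formal Néron/Raynaud) model $\mathcal{A}$ of the abeloid $A$ over $\O_K$, the multiplication $[p]$ extends to the formal model, and on the formal level one analyzes its effect on the special fiber. (2) Decompose $[p]$ according to the connected-étale-type sequence: étale-locally on $\mathcal A$ the map $[p]$ looks like a product of a "toric/multiplicative" part, where it is given by $x \mapsto x^p$ on coordinates, an "abelian" part which is handled by descending to smaller abelian varieties / by the Raynaud uniformization, and an étale part which is trivial to take limits of. On the multiplicative part, $\varprojlim_{x\mapsto x^p}$ of $\O_K\langle x^{\pm 1}\rangle$ is the perfectoid ring $\O_K\langle x^{\pm 1/p^\infty}\rangle$, which is the prototype of the construction. (3) Assemble these pieces: take the $p$-adic completion of the colimit of the structure sheaves along the tower on a cofinal system of affinoid opens, check the result is a perfectoid Tate ring (reduced, with a topologically nilpotent unit $\varpi$ such that $\varpi^p \mid p$ and Frobenius surjective on $\O^+/\varpi$), and glue these affinoid perfectoid pieces to a perfectoid space $A_\infty$. (4) Verify the two conditions of the tilde-limit definition: the spectral-space identification of $|A_\infty|$ with $\varprojlim|A|$ and the density of $\varinjlim \O_A$ in $\O_{A_\infty}$.

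The main obstacle I expect is step (2): controlling $[p]$ integrally. Over the generic fiber $[p]$ is finite étale on an abeloid only in characteristic $0$ away from $p$, but we are inverting $p$-power multiplication, so the relevant maps are precisely the ones that degenerate mod $p$. One must use the structure theory of abeloid varieties — in particular their Raynaud uniformization $0 \to M \to E \to A \to 0$ where $E$ is an extension of an abelian variety with good reduction by a torus, and $M$ is a lattice — to split the problem into the good-reduction abelian part (where one needs that the tower of $p^n$-torsion covers becomes perfectoid; this is where an argument via the Hodge–Tate / Scholze pro-étale picture, or explicit coordinates on a formal model of an abelian variety with good reduction, is needed) and the torus/lattice part (elementary, via adjoining $p$-power roots). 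Checking that the completed colimit of structure sheaves is genuinely perfectoid — rather than merely a complete Tate ring — is the technical heart: one needs a uniform bound showing Frobenius is surjective modulo a fixed pseudouniformizer on the integral level, which is where the finiteness/flatness of $[p]$ on the formal model and a careful choice of $\varpi$ enter. Once the affinoid perfectoid building blocks are in hand, gluing and the verification of the tilde-limit axioms should be formal, using that tilde-limits can be checked on a cover and that $\varprojlim$ of the underlying spectral spaces behaves well for the spectral, quasicompact transition maps $[p]$.
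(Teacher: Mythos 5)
Your outline correctly identifies the Raynaud uniformisation $0 \to M \to E \to A \to 0$ as the structural backbone, and your treatment of the torus and good-reduction abelian pieces (perfectoid $\mathbb{G}_m$ by adjoining $p$-power roots, and formal models with Frobenius on the special fibre, respectively) matches the paper. Where you part ways with the paper is in the handling of the lattice quotient $E \to A = E/M$, and there is a genuine gap there.

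You describe "the torus/lattice part" as "elementary, via adjoining $p$-power roots," which conflates two very different things. Adjoining $p$-power roots handles the torus $T$; it does nothing for the lattice $M \cong \underline{\Z^r}$. The subtlety is that $M$ is not $p$-divisible, so the $[p]$-tower of $A$ does \emph{not} reduce to the $[p]$-tower of $E$ even locally. Concretely: if $U' \subset E$ is an open mapping isomorphically to $U \subset A$, then $[p^n]^{-1}_A(U)$ is strictly larger than $\pi([p^n]^{-1}_E(U'))$, because $A[p^n]/E[p^n] \cong M/p^nM$ (snake lemma on the columns of the uniformisation). So you cannot simply say $A$ is analytically-locally $E$ and pass the tilde-limit through. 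The paper's actual resolution is to choose auxiliary lattices $M_n \subset E$ with $[p]\colon M_{n+1} \xrightarrow{\sim} M_n$, which splits the $[p]$-tower of $A$ into two towers: a "vertical" tower $A_n := E/M_n$ whose limit $A'_\infty$ is inherited directly from $E_\infty$ (since the transition maps there look like $[p]$ on $E$ over a trivialising open), and a "horizontal" tower of finite étale covers $A \to A_n$ with kernel $D_n = M_n/M$, which locally is just $D_n \times U_n \to U_n$. The limit is then locally $D_\infty \times U_\infty$ with $D_\infty = \varprojlim D_n$ a profinite perfectoid group, and globally $A_\infty = D_\infty \times^{M_\infty} E_\infty$. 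None of this is "adjoining $p$-power roots," and it is the main new input needed once one leaves good reduction; your outline as written would stall at this step.

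A secondary, stylistic difference: you emphasize checking perfectoidness via surjectivity of Frobenius mod $\varpi$ on a formal model of $E$ and of $A$. The paper instead exploits that $E \to B$ is an analytic-locally split $T$-torsor, so $E$ is locally $T \times U$ and perfectoidness of $E_\infty$ reduces to that of $T_\infty \times U_\infty$ (a product of known perfectoids, via the lemma that tilde-limits of affinoids commute with fibre products), sidestepping a direct Frobenius computation on $E$. The formal-model route you sketch does work for $E$ (the paper records it as a remark), but it would still need the $M_n/D_n$ bookkeeping above to get from $E_\infty$ to $A_\infty$.
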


The possibility of results in this direction is mentioned in \S 7 and \S 13 of \cite{scholzeICMproceedings}, and in the case of abelian varieties with good reduction, this theorem was proven already in \cite[Lemme~A.16]{Pilloni-Stroh}. We recall the argument in Lemma~\ref{tilde-limit exists and is perfectoid in the good reduction case} below. 

In general, $A$ has semi-stable reduction by the assumption that $K$ is algebraically closed.
Consequently, the theory of Raynaud extensions provides us with a short exact sequence 
\[ 0 \rightarrow T \rightarrow E  \rightarrow  B  \rightarrow  0\]
of rigid groups, where $T = (\mathbb G_m^{\text{an}})^{d}$ is a split rigid torus and $B$ is the analytification of an abelian variety with good reduction, such that $A = E/M$ for a discrete lattice $M \subset E$. This short exact sequence is split locally on $B$, allowing us to locally write $E$ as a product of $T$ and an open subspace of $B$.
Our strategy for the proof of Theorem \ref{thm:main_thm_intro}, which more generally applies to any abeloid variety over $K$, is now similar to the good reduction case:
\begin{enumerate}
\item Construct a perfectoid tilde-limit $T_\infty\sim\varprojlim_{[p]} T$. This is easy.
\item Use $T_\infty$ and $B_\infty$ to construct a perfectoid tilde-limit $E_\infty\sim\varprojlim_{[p]} E$.
\item Study the quotient map $E\rightarrow A$ in the limit over $[p]$ to construct the desired space $A_\infty$.
\end{enumerate}

More precisely, this article is organised as follows: In \S2 we recall the definition of tilde-limits and collect some useful lemmas about tilde-limits and perfectoid spaces. In particular, we construct the perfectoid tilde-limit $T_\infty$. In \S3 we use the language of fibre bundles to construct a perfectoid tilde-limit $E_\infty$: The Raynaud extension of $A$ mentioned earlier arises from a short exact sequence of formal group schemes over $\O_K$
\[0\rightarrow \overline{T}\rightarrow \overline{E}\rightarrow \overline{B}\rightarrow 0\]
by taking generic fibres and forming the pushout with respect to the open immersion $\overline{T}_\eta\rightarrow T$. Since the sequence is locally split, we can see $\overline{E}\rightarrow \overline{B}$ as a principal $\overline{T}$-bundle and formation of $E$ amounts to a change of fibre from $\overline{T}_\eta$ to $T$. We get the desired tilde-limit by tracing the local splitting through the tower of multiplication by $[p]$. This will also show that there is a short exact sequence of perfectoid groups
\[ 0\to T_\infty \to E_\infty \to B_\infty \to 0.\]

In \S4 we finish the proof of Theorem~\ref{thm:main_thm_intro} by constructing $A_\infty$ from $E_\infty$ as follows: After choosing lattices $M\subset M_n\subset E$ that map isomorphically to $M$ under $[p^n]\colon E\rightarrow E$, the $[p]$-multiplication tower of $A=E/M$ naturally factors into two separate towers: One is the tower of maps $E/M_{n+1}\rightarrow E/M_n$ induced from $[p]$-multiplication of $E$, and the other is induced from the projection maps $v^n\colon E/M\rightarrow E/M_n$. Using local splittings, one can construct a perfectoid tilde-limit $A'_\infty\sim \varprojlim_n E/M_n$ of the first tower from $E_\infty$. It fits into a short exact sequence
\[0\to M\to E_\infty\to A'_\infty\to 0. \]
 The existence of $A_\infty\sim \varprojlim_{[p]}A$ then follows as the quotient maps $v^n\colon E/M\rightarrow E/M_n$ are \'etale. In fact, they are locally split in the analytic topology, from which one can deduce the following analogue of Raynaud uniformisation for $A_\infty$: Write $D_n$ for the kernel of $v^n$. Then there is a profinite perfectoid tilde-limit $D_\infty\sim \varprojlim_{[p]} D_n$ and a short exact sequence of perfectoid groups
\[0\rightarrow M\rightarrow D_\infty \times E_\infty \rightarrow A_\infty\rightarrow 0,\]
which we regard as an analogue of the sequence $0\rightarrow M\rightarrow E\rightarrow A\rightarrow 0$.

We give three applications of Theorem~\ref{thm:main_thm_intro} in \S 5:
As observed by Hansen, one can deduce from Theorem~\ref{thm:main_thm_intro} the existence of certain universal covers of curves by embedding them into their Jacobian:
	\begin{corollary}[Hansen,\cite{Hansen-blog}]
	Let $C$ be a connected smooth projective curve of genus $g\geq 1$ over $K$. Fix a geometric point $x\colon \Spec(K) \rightarrow C$ and for each open subgroup $H$ of $\pi_1(C,x)$, let $C_H$ denote the finite \'etale cover of $C$ corresponding to $H$. We regard $C$ and $C_H$ as analytic adic spaces.
	\begin{enumerate}
		\item There is a perfectoid tilde-limit $\tilde{C} \sim \varprojlim_{H}C_H$ where $H$ ranges over the open subgroups of $ \pi_1(C,x)$. 
		\item The morphism $\tilde{C}\to C$ is a pro-\'etale $\pi_1(C,x)$-torsor. It is universal with this property in the sense that it represents the fibre functor sending	 pro-finite-\'etale perfectoid covers $X\to C$ to the $\pi_1(C,x)$-module $F(X)=\mathrm{Hom}_C(x,X)$.
		\item For any pro-finite-\'etale morphism $X\to C$, there is a natural isomorphism
		\[ X = \underline{F(X)}\times^{\pi_1(C,x)}\tilde{C}:=(\underline{F(X)}\times \tilde{C})/\pi_1(C,x).\]
		Here the right hand side is the categorical quotient in adic spaces for the antidiagonal action.
	\end{enumerate}
\end{corollary}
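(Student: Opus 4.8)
The plan is to reduce everything to Theorem~\ref{thm:main_thm_intro} applied to the Jacobian of $C$, the key geometric input being that a smooth projective curve of positive genus sits as a Zariski closed subspace inside its Jacobian. Using the base point $x$, form the Abel--Jacobi map $\iota\colon C\hookrightarrow J$ into the Jacobian $J$ of $C$; since $g\ge 1$ this is a closed immersion of projective $K$-varieties, hence, after adic analytification, a closed immersion of analytic adic spaces over $\Spa(K,\O_K)$. For $n\ge 0$ set $C_n:=C\times_{J,[p^n]}J$, a finite \'etale cover of $C$, with transition maps induced by $[p]$, so that $\varprojlim_n C_n\sim C\times_J J_\infty$, where $J_\infty\sim\varprojlim_{[p]}J$ is the perfectoid space produced by Theorem~\ref{thm:main_thm_intro}. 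The first step---and the one I expect to be the main obstacle---is to show that $C\times_J J_\infty$ is perfectoid and that the projection exhibits it as the tilde-limit $\varprojlim_n C_n$. Covering $J$ by affinoid opens $V$ witnessing the tilde-limit property of $J_\infty$ (so that $V_\infty:=V\times_J J_\infty$ is affinoid perfectoid with structure algebra the completed colimit of the $\O(V_n)$), and writing $\O(\iota^{-1}(V))=\O(V)/I$, one is reduced to showing that the Zariski closed subspace of $V_\infty$ cut out by $I$---with structure algebra $\O(V_\infty)\hotimes_{\O(V)}\O(\iota^{-1}(V))$---is affinoid perfectoid. This is an instance of the statement that a Zariski closed subspace of a perfectoid space carries a perfectoid structure; in mixed characteristic I would prove it by tilting, using the elementary fact that a quotient of a perfect $\F_p$-algebra is perfect, and then untilting. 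The remaining tilde-limit axioms should then follow from those for $J_\infty$ together with the lemmas of \S2.

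Granting this, put $\tilde C_0:=C\times_J J_\infty$. For every finite \'etale cover $C'\to C$ the base change $C'\times_C\tilde C_0\to\tilde C_0$ is finite \'etale over a perfectoid space, hence itself perfectoid (almost purity), and $C'\times_C\tilde C_0\sim\varprojlim_n(C'\times_C C_n)$ because tilde-limits are stable under finite \'etale base change. Since every $C'\times_C C_n$ is again a finite \'etale cover of $C$, while $C'=C'\times_C C_0$, reindexing identifies the universal pro-finite-\'etale cover of $C$ with a cofiltered limit of perfectoid spaces along finite \'etale transition maps,
\[ \tilde C\;:=\;\varprojlim_{H}C_H\;\sim\;\varprojlim_{C'}\bigl(C'\times_C\tilde C_0\bigr), \]
which therefore exists as a perfectoid space and is a tilde-limit by the \S2 lemmas on tilde-limits of perfectoid affinoids; this is (1). (Passing through $\tilde C_0$ is essential: the tower $\{C_n\}$ is cofinal only among abelian pro-$p$ covers of $C$, since $C_n$ is classified by the preimage in $\pi_1(C,x)$ of the kernel of $[p^n]$ on $\pi_1(J,0)$, so one genuinely has to enlarge it.)

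Parts (2) and (3) are then formal. By construction $\tilde C\to C$ is a cofiltered limit of finite \'etale maps; the deck transformations of the finite Galois subcovers assemble to an action of $\pi_1(C,x)$ on $\tilde C$ over $C$ with $\tilde C/\pi_1(C,x)=C$, and passing to the limit in the level-wise isomorphisms $C_H\times_C C_H\cong C_H\times\underline{\pi_1(C,x)/H}$ (over open normal $H$, which are cofinal) yields $\tilde C\times_C\tilde C\cong\tilde C\times\underline{\pi_1(C,x)}$, i.e.\ $\tilde C\to C$ is a pro-\'etale $\pi_1(C,x)$-torsor. For a pro-finite-\'etale $X\to C$, write $X=\varprojlim_i X_i$ with $X_i\to C$ finite \'etale; since $\tilde C$ is a pro-universal cover, every finite \'etale cover of $\tilde C$ splits, so $\tilde C\times_C X_i\cong\tilde C\times\underline{F(X_i)}$ compatibly in $i$, hence $\tilde C\times_C X\cong\tilde C\times\underline{F(X)}$, $\pi_1(C,x)$-equivariantly for the antidiagonal action. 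This gives $\Hom_C(\tilde C,X)\cong\Hom_{\pi_1(C,x)}(\pi_1(C,x),F(X))\cong F(X)$, which is the universality in (2); and, as $\tilde C\times_C X\to X$ is the base change of the torsor $\tilde C\to C$,
\[ X\;=\;(\tilde C\times_C X)/\pi_1(C,x)\;\cong\;\bigl(\underline{F(X)}\times\tilde C\bigr)/\pi_1(C,x)\;=\;\underline{F(X)}\times^{\pi_1(C,x)}\tilde C, \]
which is (3); here one needs that the quotient of a perfectoid space by a free continuous action of a profinite group exists in adic spaces and agrees with the categorical quotient, checked affinoid-locally on $C$ where $\tilde C$ is affinoid perfectoid. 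In sum, the substantive inputs beyond Theorem~\ref{thm:main_thm_intro} are the perfectoidness of the Zariski closed preimage in the first step and the existence of these quotients, both of which I would isolate as lemmas before beginning.
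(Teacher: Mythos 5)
Your proposal is correct and follows the same overall strategy as the paper (embed $C$ in its Jacobian $J$, use Theorem~\ref{thm:main_thm_intro} plus perfectoidness of Zariski closed subsets to get $C_\infty\sim\varprojlim C_n$, then exhibit $\tilde C$ as a pro-finite-\'etale cover of $C_\infty$ to which almost purity applies). The one place where you genuinely deviate is in how you realize $\tilde C$ over $C_\infty$: you take the system $\{C'\times_C C_\infty\}_{C'}$ indexed by all finite \'etale $C'\to C$, whereas the paper works with the connected covers $C_{H,\infty}:=\varprojlim_n C_{G_n\cap H}$ and needs the stabilization of the indices $[G_n:G_n\cap H]$ together with the fibre-product isomorphism $C_{G_{n+1}\cap H}\cong C_{G_n\cap H}\times_{C_{G_n}}C_{G_{n+1}}$ to see that each $C_{H,\infty}\to C_\infty$ is genuinely finite \'etale. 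Your base-change construction sidesteps that combinatorial verification at the price of replacing the connected $C_{H,\infty}$ by the (typically disconnected) $C_H\times_C C_\infty$; both systems are cofinal in $C_{\mathrm{prof\acute et}}$, so the pro-objects agree, and your version is arguably the cleaner way to see finite-\'etaleness of the transition maps.

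One small gap to flag: the assertion that the cofiltered limit $\varprojlim_{C'}(C'\times_C\tilde C_0)$ is a perfectoid tilde-limit does not follow from the \S2 lemmas (which handle open immersions and products of affinoid towers, not infinite towers of finite \'etale covers); the paper appeals to \cite[Lemma 4.6]{p-adic_Hodge}, which is exactly the statement you need. Similarly, perfectoidness of the Zariski closed preimage $C\times_J J_\infty$ is already available as \cite[Lemma II.2.2]{torsion}, so you need not reprove it via tilting; and the existence/identification of the categorical quotient in part (3) is where the paper invokes \cite[Lemma 2.24]{CHJ} together with \cite[Theorem 8.2.3]{KedlayaLiu-II}. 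With those references supplied, your argument is complete and matches the paper's.
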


Second, we note that the analogue of this corollary also works for $C$ replaced by an abelian variety, in which case the pro-\'etale fundamental group is isomorphic to the absolute Tate module $TA:=\varprojlim_{N\in \N}A[N]$. In particular, one obtains from this two different natural ways to uniformise the diamond $A^{\diamond}$ attached to $A$: On the one hand, as a consequence of Theorem~\ref{thm:main_thm_intro}, we can write
\[ A^{\diamond}=A_\infty/T_pA.\]
On the other hand, one can deduce from Theorem~\ref{thm:main_thm_intro} that there is also a perfectoid tilde-limit $\tilde{A}\sim \varprojlim_{[N]} A$ which gives rise to a natural isomorphism
\[ A^{\diamond}=\tilde{A}/T A.\]
Here the second equation describes $A$ in terms of the universal connected pro-finite-\'etale cover $\tilde{A}\to A$, whereas the first uses the universal connected pro-finite-\'etale pro-$p$-cover.
Either may be seen as a sort of analogue of Riemann uniformisation of abelian varieties over $\C$. 

Our third application of Theorem~\ref{thm:main_thm_intro} states that in line with this analogy to the complex case, the cohomology of constructible $\F_p$-sheaves on $A_\infty$ behaves like that of a Stein space: This follows in combination with a result of Reinecke:
\begin{corollary}[Reinecke]
	Let $L$ be a constructible sheaf of $\mathbb F_p$-modules on $A_{\et}$. Then for $i>\dim A$,
	\[\textstyle\varinjlim_{n\in \mathbb N}H_{\et}^i(A,[p^n]^{\ast}L)=0.\]
\end{corollary}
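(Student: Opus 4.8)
The plan is to combine Theorem~\ref{thm:main_thm_intro} with a cohomological dimension bound for $A_\infty$, in two steps: first identify the colimit on the left with the étale cohomology of the perfectoid space $A_\infty\sim\varprojlim_{[p]}A$, and then show this cohomology vanishes above degree $\dim A$ because $A_\infty$ behaves, for $\F_p$-étale cohomology, like a Stein space of dimension $\dim A$ --- the latter being Reinecke's input.

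For the first step, recall an abeloid variety $A$ is proper, hence quasi-compact and quasi-separated, so $A_\infty$ is again qcqs. Write $L_\infty$ for the pullback of $L$ to $A_{\infty,\et}$ along the projection $A_\infty\to A$; it is again a constructible $\F_p$-sheaf, since $[p]$ is finite and preimages of constructible subsets under the spectral map $|A_\infty|\to|A|$ are constructible, and it is compatibly the pullback of $[p^n]^{\ast}L$ at every finite level of the tower. By the standard comparison of étale cohomology with tilde-limits for torsion sheaves on qcqs adic spaces (see \cite{huber2013etale}), one obtains a natural isomorphism
\[ \textstyle\varinjlim_{n\in\N}H^i_{\et}\bigl(A,[p^n]^{\ast}L\bigr)\;\cong\;H^i_{\et}(A_\infty,L_\infty)\qquad(i\ge 0),\]
with transition maps on the left given by pullback along $[p]$. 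It therefore suffices to show $H^i_{\et}(A_\infty,L_\infty)=0$ for $i>\dim A$.

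This vanishing is Reinecke's theorem applied to $A_\infty$, which is realised as a tilde-limit of copies of the $d$-dimensional rigid space $A$ ($d=\dim A$): constructible $\F_p$-sheaves on such an $A_\infty$ have étale cohomological dimension at most $d$. The approach I would take to this bound --- and the step I expect to be the real obstacle --- is to pass to the tilt: $A_\infty^{\flat}$ is a perfectoid space of characteristic $p$ over $K^{\flat}$, the tilting equivalence of small étale topoi $A_{\infty,\et}\simeq A^{\flat}_{\infty,\et}$ carries $L_\infty$ to a constructible $\F_p$-sheaf, and one then brings characteristic-$p$ techniques to bear --- Artin--Schreier theory together with an Artin-type vanishing on affinoid perfectoid charts --- the heuristic being that in the colimit over the degree-$p$-power isogenies $[p^n]$ the cohomology of the proper space $A$ collapses onto that of something $d$-dimensional. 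Granting this bound completes the proof; everything before it is the routine ``combination'' referred to in the statement, the only points requiring attention being the hypotheses under which étale cohomology commutes with the tilde-limit and the preservation of constructibility on $A_\infty$ (and $A_\infty^{\flat}$).
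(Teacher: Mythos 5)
Your proposal hits the same structural point as the paper: once Theorem~\ref{thm:main_thm_intro} furnishes a perfectoid tilde-limit $A_\infty\sim\varprojlim_{[p]}A$, the vanishing is exactly what Reinecke's theorem produces. The paper's proof is in fact a single line — it simply invokes Theorem~\ref{thm:main_thm_intro} and then cites Reinecke's Theorem~3.3 applied to the system $\cdots\to A\xrightarrow{[p]}A$, with Reinecke's hypothesis being precisely that the tower has a perfectoid tilde-limit. Everything after your first paragraph — the identification $\varinjlim H^i_{\et}(A,[p^n]^*L)\cong H^i_{\et}(A_\infty,L_\infty)$, the constructibility of $L_\infty$, the cohomological dimension bound via tilting and affinoid acyclicity — is really unpacking the internals of Reinecke's proof rather than of this corollary, and the paper leaves all of that to the citation (it does add an informal remark afterwards in the same spirit as your sketch, emphasizing the Stein-like acyclicity of affinoid perfectoid opens from \cite{p-adic_Hodge} and a Čech reduction, which is closer to Reinecke's actual argument than the "tilt and then Artin--Schreier" route you propose). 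Two small cautions on the extra material: constructibility for a sheaf on the non-Noetherian space $A_\infty$ needs more care than "preimages of constructible subsets are constructible," and your "collapse onto something $d$-dimensional" heuristic is doing the real work of the dimension bound — the concrete input there is a finite cover of $A$ by $d+1$ affinoids, lifted to affinoid perfectoids in $A_\infty$, fed into a Čech spectral sequence. None of this affects correctness of the corollary as you use Reinecke as a black box, which is exactly what the paper does.
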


 The paper ends with an appendix on fibre bundles and associated fibre bundle constructions in the context of adic spaces, which develops some language that we need in the construction of $A_\infty$.

 \addtocontents{toc}{\protect\setcounter{tocdepth}{0}} 
 \section*{Acknowledgements}
 \addtocontents{toc}{\protect\setcounter{tocdepth}{2}} 
 This work started as a group project at the 2017 Arizona Winter School. We would like to thank Bhargav Bhatt for proposing the project, for his guidance and for his constant encouragement, and we would like to thank Matthew Morrow for his help during the Arizona Winter School. In addition we would like to thank the organizers of the Arizona Winter School for setting up a great environment for us to participate in this project. We would like to thank David Hansen for letting us include Corollary~\ref{c:universal-covers-of-curves} which we learnt from his blog.
 
  During this work, Cliff Blakestad was partially supported by NRF-2018R1A4A1023590.
 Dami\'an Gvirtz and Ben Heuer were supported by the Engineering and Physical Sciences Research Council [EP/L015234/1], the EPSRC Centre for Doctoral Training in Geometry and Number Theory (The London School of Geometry and Number Theory), University College London. 
 During the Arizona Winter School, Daria Shchedrina was supported by Peter Scholze and the DFG.
During the preparation of this work, Koji Shimizu was partially supported by NSF grant DMS-1638352 through membership at the Institute for Advanced Study.
 Peter Wear was supported by NSF grant DMS-1502651 and UCSD and would like to thank Kiran Kedlaya for helpful discussions.

\addtocontents{toc}{\protect\setcounter{tocdepth}{0}} 
\section*{Notation}
\addtocontents{toc}{\protect\setcounter{tocdepth}{2}} 
	Let $K$ be an algebraically closed non-archimedean field, let $\mathcal O_K$ be the ring of integers of $K$ and fix a pseudo-uniformiser $\varpi\in \mathcal O_K$ such that $p\in\varpi\mathcal O_K$. 
	
	We will use adic spaces over $\operatorname{Spa}(K,\O_K)$ in the sense of Huber, and perfectoid spaces over $\operatorname{Spa}(K,\O_K)$ in the sense of Scholze \cite{perfectoid}. We denote by $X\mapsto X^{\an}$ the analytification functor from schemes of finite type over $X$ to analytic adic spaces over $(K,\O_K)$.
	
	By a rigid space, we shall always mean an analytic adic space of topologically finite type over $\operatorname{Spa}(K,\mathcal O_K)$. 
	In particular, by an open cover of a rigid space we shall always mean a cover of the associated adic space, so that we do not need the notion of admissible covers.
	
	For a $\varpi$-adic formal scheme $\mathfrak X$ over $\operatorname{Spf}(\mathcal O_K)$, we denote by $\mathfrak X_\eta:=\mathfrak X^{\mathrm{ad}}\times_{\operatorname{Spa}(\mathcal O_K,\mathcal O_K)}\operatorname{Spa}(K,\mathcal O_K)$ its adic generic fibre. Here $\mathfrak X^{\mathrm{ad}}$ is the adification in the sense of \cite{SW}.

	
\numberwithin{theorem}{section}
	\section{Tilde-limits of rigid groups} \label{section:tilde_limit}

		\subsection{Tilde-limits} 
	We begin with some lemmas on tilde-limits that we will need throughout.
		
	Inverse limits often do not exist in the category of adic spaces, and neither do they in rigid spaces. Instead we use the notion of tilde-limits from \cite[Definition 2.4.2]{huber2013etale}:	
	\begin{definition} 
Let $(X_i)_{i\in I}$ be a filtered inverse system of adic spaces with quasi-compact and quasi-separated transition maps, and let $X$ be an adic space with a compatible system of morphisms $f_i\colon X \rightarrow X_i$. We write $X \sim \varprojlim X_i$ and say that $X$ is a \textbf{tilde-limit} of the inverse system $(X_i)_{i\in I}$ if the map of underlying topological spaces $|X| \rightarrow \varprojlim |X_i|$ is a homeomorphism, and there exists an open cover of $X$ by affinoids $\operatorname{Spa} (A, A^+) \subset X$ such that the map 
$$ \varinjlim_{\operatorname{Spa}(A_i, A_i^+) \subset X_i} A_i \rightarrow A$$
has dense image, where the direct limit runs over all $i\in I$ and all open affinoid subspaces $\operatorname{Spa}(A_i, A_i^+) \subset X_i$ through which the morphism $\operatorname{Spa}(A, A^+) \subseteq X\rightarrow X_i$ factors.
	\end{definition}
	
	\begin{remark} \label{remark:tilde_limit_non_unique}
As pointed out after Proposition 2.4.4 of \cite{SW}, tilde-limits (if they exist) are in general not unique. However, Corollary~\ref{corollary: perfectoid tilde limit is unique} below says that perfectoid tilde-limits are unique.
	\end{remark}

We recall a few results from \cite{SW}, \S2.4 on tilde-limits that we will use frequently throughout:

\begin{proposition}[\cite{SW}, Proposition 2.4.2]\label{SW Proposition 2.4.2}
	Let $(A_i,A_i^{+})$ be a direct system of affinoids over $(K,\mathcal O_K)$ with compatible rings of definition $A_{i,0}$ carrying the $\varpi$-adic topology. Let $(A,A^{+})=(\varinjlim A_i,\varinjlim A_i^{+})$ be the affinoid algebra equipped with the topology making $\varinjlim A_{i,0}$ with the $\varpi$-adic topology a ring of definition. Then
	\[\operatorname{Spa}(A,A^{+})\sim \textstyle\varprojlim \operatorname{Spa}(A_i,A_i^{+}).\]
\end{proposition}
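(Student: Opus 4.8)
The plan is to verify the two defining conditions of a tilde-limit for the inverse system $(\Spa(A_i,A_i^+))_{i\in I}$ together with the affinoid $X:=\Spa(A,A^+)$ equipped with the morphisms induced by the continuous ring homomorphisms $A_i\to A$; note that the transition maps are morphisms of affinoids, hence quasi-compact and quasi-separated, so the definition applies. The density condition is then immediate once one takes as open cover of $X$ the single affinoid $X$ itself: the morphism $X\to X_i$ factors trivially through the affinoid $X_i$ itself, so $\mathcal O_{X_i}(X_i)$ occurs among the rings in the relevant colimit, and since $\varinjlim_i A_i=A$ already has dense image in $\mathcal O_X(X)$ (the completion of $A$), so does $\varinjlim_i\mathcal O_{X_i}(X_i)$.

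The substance is the claim that the natural map $f\colon|X|\to\varprojlim_i|X_i|$ is a homeomorphism. Continuity of $f$ is clear, being the universal map into an inverse limit induced by the continuous morphisms $X\to X_i$. Injectivity is also easy: if continuous valuations $v,w$ on $A$ have equivalent restrictions to every $A_i$, then, since any two elements of $A=\varinjlim_i A_i$ lie in a common $A_i$, the relations $v(a)\le v(b)\Leftrightarrow w(a)\le w(b)$ hold on all of $A$, so $v\sim w$. For surjectivity I would pass to the valuation-ring description of points: a compatible system of points of the $|X_i|$, represented by valuations $v_i$ on $A_i$, has supports $\supp(v_i)$ gluing to a prime $\mathfrak p\subset A$ with $\mathfrak p\cap A_i=\supp(v_i)$; one has $\kappa(\mathfrak p)=\varinjlim_i\kappa(\supp(v_i))$, and compatibility gives $\mathcal O_{v_i}=\mathcal O_{v_j}\cap\kappa(\supp(v_i))$, so the union $V:=\bigcup_i\mathcal O_{v_i}$ is a valuation ring of $\kappa(\mathfrak p)$. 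The associated valuation $v$ on $A$ restricts to $v_i$ on each $A_i$; it satisfies $v(A^+)\le1$ because $A^+=\varinjlim_i A_i^+$ with each $v_i(A_i^+)\le1$, and its continuity is addressed next.

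Continuity of $v$ is the step I expect to require the most care, and it is where the hypothesis on the rings of definition is genuinely used. Each $A_{i,0}$, being a ring of definition, consists of power-bounded elements, so $v(A_{i,0})=v_i(A_{i,0})\le1$ and hence $v(A_0)\le1$ for $A_0:=\varinjlim_i A_{i,0}$. Continuity of each $v_i$ shows that $v_i(\varpi)$ is cofinal in the value group of $v_i$, and since these value groups include (order-preservingly) into the value group of $v$ with $v(\varpi)=v_i(\varpi)$, the element $v(\varpi)$ is cofinal there as well. Therefore, for every $\gamma$ in the value group of $v$, choosing $i$ with $\gamma$ in the value group of $v_i$ and then $n$ large gives $v(\varpi^n A_0)\le v(\varpi)^n<\gamma$, so $\{a\in A:v(a)<\gamma\}$ contains the open set $\varpi^n A_0$; as these sets form a neighbourhood basis of $0$ for the $\varpi$-adic topology on $A_0$, the valuation $v$ is continuous.

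Finally, to promote the continuous bijection $f$ to a homeomorphism it suffices to show that the preimages under the maps $X\to X_i$ of the open subsets of the $X_i$ form a basis of the topology of $|X|$: the opens of $\varprojlim_i|X_i|$ are generated by such preimages, and a continuous bijection that pulls back a basis to a basis is open, hence a homeomorphism. This holds because $A=\varinjlim_i A_i$: any rational subset of $X$ is cut out by finitely many elements of $A$, all of which lie in some $A_i$, and after enlarging $i$ so that the relevant ideal is already open in $A_i$ (possible since it is open in $A$ and $A=\varinjlim_i A_i$), that rational subset of $X$ is precisely the preimage of the corresponding rational subset of $X_i$; since rational subsets form a basis of $|X|$, we are done.
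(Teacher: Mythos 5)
The paper gives no proof of this statement: it is quoted verbatim (modulo minor rephrasing) from \cite{SW}, Proposition~2.4.2, and used as a black box. There is therefore no in-paper argument to compare your proof against; what you have written is essentially a reconstruction of the argument in \cite{SW}, and the overall structure is right. The density condition is indeed immediate from $A=\varinjlim A_i$, injectivity of $|X|\to\varprojlim|X_i|$ is formal since any two elements of $A$ lie in a common $A_i$, surjectivity via gluing of supports and valuation rings is the standard construction, and the homeomorphism is correctly reduced to the observation that every rational subset of $\operatorname{Spa}(A,A^+)$ already descends to some $\operatorname{Spa}(A_i,A_i^+)$.

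One step of your continuity argument is, however, justified by a false implication. You assert that $v_i(A_{i,0})\le 1$ ``since $A_{i,0}$, being a ring of definition, consists of power-bounded elements.'' It is not true in general that a continuous valuation on a Tate ring is $\le 1$ on power-bounded elements. For example, on $A=K\langle T\rangle$ with $A_0=\mathcal O_K\langle T\rangle$, take the rank-$2$ valuation $v\bigl(\textstyle\sum a_kT^k\bigr)=\max_k\,|a_k|\cdot\gamma^k$ with values in $|K^\times|\times\gamma^{\mathbb Z}$ ordered lexicographically with the $|K^\times|$-part dominant, $\gamma$ being an infinitesimal element greater than $1$. Then $v$ is continuous ($v(\varpi)=(|\varpi|,1)$ is cofinal, and $\varpi^mA_0\subset\{v<\delta\}$ whenever $|\varpi|^m$ is smaller than the $|K^\times|$-part of $\delta$), the element $T$ is power-bounded, yet $v(T)=\gamma>1$. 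What actually makes $v_i(A_{i,0})\le 1$ hold in the setting of the proposition is a hypothesis that the paper's restatement silently drops but that is present in \cite{SW}: the rings of definition are required to satisfy $A_{i,0}\subset A_i^+$, which gives $v_i(A_{i,0})\le v_i(A_i^+)\le 1$ directly (and also guarantees that $A^+=\varinjlim A_i^+$ is open, so $(A,A^+)$ is genuinely a Huber pair). With that hypothesis reinstated your continuity argument is correct; without it, the uniform bound $v(A_0)\le 1$ needed to produce a single $n$ with $\varpi^nA_0\subset\{v<\gamma\}$ is not available, and the step would need a different justification.
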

\begin{proposition}[\cite{SW}, Proposition 2.4.3]\label{SW Proposition 2.4.3}
	Let $X\sim \varprojlim_{i\in I} X_i$ be a tilde-limit and let $U_i\hookrightarrow X_i$ be an open immersion for some $i\in I$. Set $U_j:=U_i\times_{X_i}X_j$ for $j\geq i$ and $U:=U_i\times_{X_i}X$. Then 
	\[U\sim \textstyle\varprojlim_{j\geq i} U_j.\]
\end{proposition}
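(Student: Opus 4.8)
The plan is to handle the underlying topological spaces by elementary limit bookkeeping, and to reduce the density condition to the case of rational localizations of an affinoid, where it can be verified by hand.

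First I would record the formal preliminaries. The index set $\{j\ge i\}$ is cofinal in the filtered $I$, hence filtered itself, and the transition maps of $(U_j)_{j\ge i}$ are quasi-compact and quasi-separated: for $j'\ge j$ one has $U_{j'}=U_j\times_{X_j}X_{j'}$, a base change of $X_{j'}\to X_j$ along the open immersion $U_j\hookrightarrow X_j$. For the spaces: since $U_i\hookrightarrow X_i$ is an open immersion, $U=U_i\times_{X_i}X$ is just the open subspace of $X$ over $|U_i|$, and likewise $|U_j|$ is the open subspace of $|X_j|$ over $|U_i|$. Using cofinality, $\varprojlim_{j\ge i}|X_j|=\varprojlim_I|X_j|\xrightarrow{\sim}|X|$, and under this homeomorphism $\varprojlim_{j\ge i}|U_j|$ — the compatible systems $(x_j)$ with $x_j$ over $x_i$, hence with $x_j\in|U_j|$ for all $j$ iff $x_i\in|U_i|$ — corresponds precisely to $|U|$; as an inverse limit of subspace embeddings $|U_j|\hookrightarrow|X_j|$ is again a subspace embedding, $|U|\to\varprojlim_{j\ge i}|U_j|$ is a homeomorphism.

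It remains to produce an open affinoid cover of $U$ witnessing the density condition. Start from an open affinoid cover $\{V=\Spa(A,A^+)\}$ of $X$ with $\varinjlim_{(A_l,A_l^+)\subseteq X_l}A_l\to A$ of dense image, as furnished by $X\sim\varprojlim X_i$; say $V\to X_l$ factors through the affinoid open $V_l=\Spa(A_l,A_l^+)$. Fix such a $V$ and a point $u\in V\cap U$, and choose a rational subset $W=\Spa(B,B^+)\subseteq V$ with $u\in W\subseteq V\cap U$, cut out by $f_0,\dots,f_n\in A$ generating the unit ideal. By Huber's lemma on the stability of rational subsets under small perturbations of their defining functions, for a sufficiently good approximation the $f_j$ may be replaced by elements in the image of some $A_k$ (with $k\ge i$) without changing $W$; so I assume $f_0,\dots,f_n$ lie in the image of $A_k$, fix lifts $f_j^{(l)}\in A_l$ for $l\ge k$, and set $W_l:=U(f_1^{(l)},\dots,f_n^{(l)}/f_0^{(l)})\subseteq V_l$. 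Then $\mathcal O(W_l)=A_l\langle f_1^{(l)},\dots,f_n^{(l)}/f_0^{(l)}\rangle$ forms a filtered system in $l\ge k$, the map $W\to X_l$ factors through $W_l$ (as $W$ is the preimage of $W_l$ under $V\to V_l$), and $\varinjlim_{l\ge k}\mathcal O(W_l)\to\mathcal O(W)=A\langle f_1,\dots,f_n/f_0\rangle$ has dense image: an arbitrary element is approximated by truncating its Tate-series expansion and then approximating the finitely many remaining coefficients through $\varinjlim_l A_l$, which lands in $\varinjlim_l\mathcal O(W_l)$.

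Finally I would bridge from the system $(X_l)$ to $(U_l)$. Since $W\subseteq U$, the map $W\to X_l$ has image in $U_l$ (the preimage of $U_i$), so the composite $W\to W_l$ has image in the open $W_l\cap U_l$ and hence factors through some affinoid rational open $B_l\subseteq W_l\cap U_l$; as affinoid opens of $U_l$ are exactly the affinoid opens of $X_l$ contained in $U_l$, each $B_l$ is a legitimate index in the colimit computing the density condition for $U\sim\varprojlim U_l$. Since $\mathcal O(W_l)\to\mathcal O(W)$ factors through $\mathcal O(B_l)\to\mathcal O(W)$, the image of $\varinjlim_{(B_l,B_l^+)\subseteq U_l}B_l\to\mathcal O(W)$ contains the dense subset $\bigcup_l\mathrm{im}(\mathcal O(W_l)\to\mathcal O(W))$, hence is dense. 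Letting $u$ range over $V\cap U$ and $V$ over the chosen cover of $X$, the resulting family of such $W$ is an open affinoid cover of $U$ with the required property. I expect the real work — and the only genuine obstacle — to be in the last two paragraphs: invoking Huber's approximation lemma in the correct generality so that the defining data of a rational subset can be descended to a finite level while keeping $u$ inside it, and then tracking how density of $\varinjlim_l A_l\to A$ propagates through completed rational localizations and, via the shrinking trick $W_l\rightsquigarrow B_l\subseteq U_l$, into the colimit that actually appears in the definition of the tilde-limit for $U$; the topological statement and the formal properties of the index system are routine.
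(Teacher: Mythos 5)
The paper cites this result from~\cite{SW} without reproducing a proof, so the comparison is really against the argument in~\cite{SW}. Your topological argument and your strategy for density (reduce to rational subsets, then propagate density through rational localizations via Tate-series truncation) are both sound and in the expected direction. However, the bridging paragraph contains a genuine gap: you assert that because the composite $W \to W_l$ has image in the open $W_l \cap U_l$, it ``factors through some affinoid rational open $B_l \subseteq W_l \cap U_l$.'' The image of $W$ is indeed quasi-compact, but a quasi-compact subset of an affinoid lying inside an open subspace is in general only contained in a \emph{finite union} of rational subsets, not in a single rational (or affinoid) open, since rational subsets are not stable under union. As written, the step does not follow; fixing it requires covering $W$ by the preimages of those finitely many rational subsets and re-running the density argument on each piece.

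The need for this shrinking step is itself an artifact of the approximation detour, which is avoidable. Since $U_i$ is an open immersion at a \emph{finite} level $i$, for each affinoid $V = \Spa(A,A^+)$ in the given cover of $X$ and each $u \in V \cap U$, one can choose $l \ge i$ with $V \to X_l$ factoring through an affinoid $V_l \subseteq X_l$, then pick a rational subset $R_l$ of $V_l$ with $u_l \in R_l \subseteq U_l \cap V_l$, and set $W$ to be the preimage of $R_l$ in $V$. The defining functions of $W$ then already come from $A_l$, so no invocation of Huber's perturbation lemma is needed; and for $m \ge l$, the preimage $R_m$ of $R_l$ in $V_m$ is automatically contained in $U_m \cap V_m$, so no shrinking is needed either. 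The Tate-series density computation then closes the argument cleanly, and this is the route one should take.
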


\begin{proposition}[\cite{SW}, Proposition 2.4.5]\label{SW Proposition 2.4.5}
	Let $(X_i)_{i\in I}$ be an inverse system of adic spaces over $(K,\mathcal O_K)$ and assume that there is a perfectoid space $X$ such that $X\sim \varprojlim_{i\in I} X_i$. Then for any perfectoid $K$-algebra $(B,B^{+})$, 
	\[X(B,B^{+})  = \textstyle\varprojlim_{i\in I}X_i(B,B^{+}).\]
\end{proposition}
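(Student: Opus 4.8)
The plan is to show that the tautological map
\[ X(B,B^+)\longrightarrow \varprojlim_{i\in I}X_i(B,B^+),\qquad g\longmapsto (f_i\circ g)_i, \]
induced by the structure morphisms $f_i\colon X\to X_i$, is bijective. For injectivity, suppose $g,g'\colon \Spa(B,B^+)\to X$ satisfy $f_i\circ g=f_i\circ g'$ for all $i$. Passing to underlying spaces and using that $|X|\to \varprojlim|X_i|$ is a homeomorphism, in particular injective, gives $|g|=|g'|$. Now cover $X$ by affinoids $U=\Spa(A,A^+)$ as in the definition of a tilde-limit, so that $\varinjlim_i A_i\to A$ has dense image, the colimit running over the open affinoids $\Spa(A_i,A_i^+)\subseteq X_i$ through which $U\to X_i$ factors. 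Over the open $V:=|g|^{-1}(U)=|g'|^{-1}(U)$ both $g$ and $g'$ factor through $U$, and restricting further to any rational subset $W=\Spa(B_W,B_W^+)\subseteq V$ they correspond to continuous ring homomorphisms $(A,A^+)\to(B_W,B_W^+)$ which agree on the image of every $A_i$, hence on the dense subring $\varinjlim_i A_i\subseteq A$. As $B_W$ is a complete Tate ring, in particular Hausdorff, the two homomorphisms coincide; since such $W$ cover $V$ and such $U$ cover $X$, we conclude $g=g'$.

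For surjectivity, let $(g_i)_i$ be a compatible system with $g_i\colon \Spa(B,B^+)\to X_i$. The maps $|g_i|$ are compatible and assemble to a continuous map $|g|\colon|\Spa(B,B^+)|\to\varprojlim|X_i|=|X|$, which I want to upgrade to a morphism of adic spaces. Fix $U=\Spa(A,A^+)\subseteq X$ as above; for each $\Spa(A_i,A_i^+)$ appearing in the colimit the morphism $U\to X_i$ lands in it by definition. Hence for any rational subset $W=\Spa(B_W,B_W^+)$ of $\Spa(B,B^+)$ with $|g|(|W|)\subseteq|U|$, the morphism $g_i|_W$ factors through $\Spa(A_i,A_i^+)$, yielding compatible continuous homomorphisms $(A_i,A_i^+)\to(B_W,B_W^+)$ and thus a continuous homomorphism $\varinjlim_i A_i\to B_W$. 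Because $\varinjlim_i A_i$ has dense image in $A$ and $B_W$ is complete, this extends uniquely to a continuous homomorphism $(A,A^+)\to(B_W,B_W^+)$, that is, to a morphism $W\to U\hookrightarrow X$ such that the composite $W\to X\xrightarrow{f_i}X_i$ equals $g_i|_W$ for all $i$. Letting $W$ range over a cover of $|g|^{-1}(U)$, and then $U$ over the chosen cover of $X$, the uniqueness proved above shows these local morphisms agree on overlaps, so they glue by the sheaf property to a morphism $g\colon\Spa(B,B^+)\to X$ with $f_i\circ g=g_i$; this is the desired preimage.

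The step I expect to be the main obstacle is this last extension: one must verify that a continuous homomorphism out of $\varinjlim_i A_i$ genuinely extends along the dense map $\varinjlim_i A_i\to A$. This is where the hypotheses are really used — one works with compatible rings of definition carrying the $\varpi$-adic topology, in the spirit of Proposition~\ref{SW Proposition 2.4.2}, so that $A$ is, up to completion, built from $\varinjlim_i A_i$, and one uses that $B_W$ (a rational localization of an affinoid perfectoid algebra, hence again perfectoid) is complete. The two remaining points, namely that $U\to X_i$ factors through a single open affinoid for the relevant $i$ and that the locally constructed morphisms are mutually compatible, are bookkeeping already encoded in the definition of a tilde-limit and in the fact that morphisms of adic spaces form a sheaf.
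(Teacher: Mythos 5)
The paper does not prove this proposition; it is stated with a citation to Scholze--Weinstein, so there is no internal proof to compare against. Judged on its own, the overall strategy of your argument --- injectivity from density plus Hausdorffness of $B_W$, surjectivity by first building the map on underlying spaces via $|X|\cong\varprojlim|X_i|$ and then producing ring maps locally on a tilde-limit-adapted affinoid cover --- is the standard one, and the injectivity half is complete and correct.

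For surjectivity, however, the ``extends uniquely by density and completeness'' step that you yourself flag is a genuine gap, and the appeal to ``compatible rings of definition in the spirit of Proposition~\ref{SW Proposition 2.4.2}'' is misplaced: in that proposition $(A,A^+)$ is \emph{defined} as a colimit with a prescribed topology, whereas here $(A,A^+)$ is given in advance and one only knows $\varinjlim A_i\to A$ has dense image. A compatible system $(A_i,A_i^+)\to(B_W,B_W^+)$ yields a ring map $\varinjlim A_i\to B_W$ that is continuous for the \emph{colimit} topology; to extend it through the dense map into $A$ one needs it continuous for the \emph{subspace} topology from $A$, i.e.\ one needs that elements of $A^+$ are $\varpi$-adically approximated by images of $\varinjlim A_i^+$ --- and ``dense image of $\varinjlim A_i$'' alone does not say this. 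The missing input is the other half of the tilde-limit hypothesis: $|X|\cong\varprojlim|X_i|$, which you use only to build $|g|$. Combined with the fact that for a complete Huber pair $A^+=\{a\in A:|a(x)|\le 1\ \text{for all}\ x\in\operatorname{Spa}(A,A^+)\}$, it shows $A^+$ is the integral closure of the $\varpi$-adic completion of the image of $\varinjlim A_i^+$ (any continuous valuation bounded on that image restricts into each $\operatorname{Spa}(A_i,A_i^+)$, hence gives a point of $\varprojlim|X_i|=|\operatorname{Spa}(A,A^+)|$, hence is $\le 1$ on $A^+$). With that identification, the maps $A_i^+\to B_W^+$ extend $\varpi$-adically to the completion and then to the integral closure because $B_W^+$ is integrally closed in $B_W$, finally giving $(A,A^+)\to(B_W,B_W^+)$. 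So the step you postponed is exactly where the topological half of the tilde-limit condition earns its keep, and it needs to be filled rather than asserted.
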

\begin{corollary}\label{corollary: perfectoid tilde limit is unique}
	Any two perfectoid spaces that are tilde-limits of the same inverse system of adic spaces over $(K,\mathcal O_K)$ are canonically isomorphic.
\end{corollary}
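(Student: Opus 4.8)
The plan is to deduce the corollary directly from Proposition~\ref{SW Proposition 2.4.5} by a Yoneda-type argument, using that perfectoid spaces over $(K,\O_K)$ are determined by the functor of points they represent on perfectoid test objects. First I would recall that a perfectoid space $X$ over $(K,\O_K)$ is, up to canonical isomorphism, determined by its restriction to the category of affinoid perfectoid $K$-algebras $(B,B^+)$: indeed, any perfectoid space is covered by affinoid perfectoid opens, and affinoid perfectoid spaces are (anti-)equivalent to affinoid perfectoid $K$-algebras by \cite{perfectoid}, so the presheaf $(B,B^+)\mapsto X(B,B^+)$ on affinoid perfectoid $K$-algebras, together with its sheaf structure for the analytic (or pro-\'etale) topology, recovers $X$.

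Now suppose $X$ and $X'$ are both perfectoid spaces with $X\sim\varprojlim_{i\in I}X_i$ and $X'\sim\varprojlim_{i\in I}X_i$ for the same inverse system $(X_i)_{i\in I}$. By Proposition~\ref{SW Proposition 2.4.5}, for every affinoid perfectoid $K$-algebra $(B,B^+)$ we have canonical bijections
\[ X(B,B^+)=\varprojlim_{i\in I}X_i(B,B^+)=X'(B,B^+),\]
and these are compatible with the maps induced by morphisms $(B,B^+)\to(C,C^+)$ of perfectoid algebras, since all the identifications come from the structure morphisms $X\to X_i$ and $X'\to X_i$ of the tilde-limits. Hence $X$ and $X'$ represent the same functor on affinoid perfectoid $K$-algebras, compatibly with the analytic topology, and therefore are canonically isomorphic as perfectoid spaces. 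The isomorphism is canonical because it is the unique one compatible with the projections to the $X_i$.

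The one point that needs a little care — and which I expect to be the main (minor) obstacle — is making precise the passage from ``same functor of points on affinoid perfectoid algebras'' to ``canonically isomorphic as adic spaces''. This requires knowing that a perfectoid space is determined by this functor, i.e. a representability/gluing statement: one writes $X=\bigcup U_j$ with $U_j$ affinoid perfectoid, observes each $U_j$ is determined by the corresponding subfunctor of $X(-,-)$ via the equivalence of \cite{perfectoid}, and that the gluing data (the overlaps $U_j\cap U_k$, themselves pro-open perfectoid subspaces) are likewise encoded functorially; since the identification of functors above is compatible with all morphisms, it respects this gluing data and patches to a global isomorphism $X\xrightarrow{\sim}X'$. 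One should also note uniqueness: any isomorphism $X\cong X'$ over the system $(X_i)$ induces the identity on $B$-points for all $(B,B^+)$ and hence, by the same representability, is the canonical one — so there is no ambiguity in calling it canonical.
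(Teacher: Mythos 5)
Your proof is correct and matches the paper's (implicit) approach: the paper states the corollary immediately after Proposition~\ref{SW Proposition 2.4.5} without further proof, the intended deduction being exactly the Yoneda-style argument you give, namely that a perfectoid space is determined up to canonical isomorphism by its functor of points on affinoid perfectoid $(K,\O_K)$-algebras (since any perfectoid space is glued from affinoid perfectoid opens), and this functor is pinned down by the tilde-limit property via Proposition~\ref{SW Proposition 2.4.5}. The care you take with the gluing step is reasonable but is the standard representability fact, so you have reconstructed essentially the same short argument the authors had in mind.
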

In the situation of the corollary, we will also refer to such a perfectoid space as \textit{the} perfectoid tilde-limit of the inverse system. Of course perfectoid tilde-limits do not always exist. An example of a basic situation in which they do is the following:
\begin{lemma}\label{l:pro-finite-perfectoid-spaces}
	Let $(S_i)_{i\in I}$ be an inverse system of finite sets. Let $S=\varprojlim_{i\in I} S_i$. Then the system of constant groups $\underline{S_i}=\mathrm{Spa}(\mathrm{Map}_{\mathrm{cts}}(S_i,K),\mathrm{Map}_{\mathrm{cts}}(S_i,\O_K))$ has a perfectoid tilde-limit	\[\underline{S}:=\mathrm{Spa}(\mathrm{Map}_{\mathrm{cts}}(S,K),\mathrm{Map}_{\mathrm{cts}}(S,\O_K))\sim\textstyle\varprojlim_{i\in I} \underline{S_i}.\]
\end{lemma}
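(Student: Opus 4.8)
The strategy is to realize everything concretely at the level of rings of continuous functions and then invoke Proposition~\ref{SW Proposition 2.4.2}. First I would check that $\underline{S}$ is perfectoid: since $S=\varprojlim_i S_i$ is a profinite set, $\mathrm{Map}_{\cts}(S,K)=\varinjlim_i \mathrm{Map}_{\cts}(S_i,K)$ (every continuous function on a profinite set factors through a finite quotient), and each $\mathrm{Map}_{\cts}(S_i,K)=K^{S_i}$ is a finite product of copies of $K$. A filtered colimit of perfectoid $K$-algebras, completed appropriately, is perfectoid; here the relevant ring of definition is $\varinjlim_i \mathrm{Map}_{\cts}(S_i,\O_K)=\varinjlim_i \O_K^{S_i}$ with its $\varpi$-adic topology, whose $\varpi$-adic completion is $\mathrm{Map}_{\cts}(S,\O_K)$ (a function on $S$ with values in $\O_K$ is a uniform limit of locally constant ones). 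Since $\O_K/\varpi$ is perfect and Frobenius is surjective on $\mathrm{Map}_{\cts}(S,\O_K/\varpi)=\mathrm{Map}_{\cts}(S,\O_K)/\varpi$, and $\varpi$ admits a compatible system of $p$-power roots in $K$, the pair $(\mathrm{Map}_{\cts}(S,K),\mathrm{Map}_{\cts}(S,\O_K))$ is a perfectoid affinoid $K$-algebra; hence $\underline{S}=\Spa(\mathrm{Map}_{\cts}(S,K),\mathrm{Map}_{\cts}(S,\O_K))$ is a perfectoid space.

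Next I would verify the tilde-limit property. The transition maps $\underline{S_j}\to\underline{S_i}$ are finite (in particular quasi-compact and quasi-separated), so the inverse system is of the required shape. Now apply Proposition~\ref{SW Proposition 2.4.2} directly with $A_i=\mathrm{Map}_{\cts}(S_i,K)$, $A_i^+=A_{i,0}=\mathrm{Map}_{\cts}(S_i,\O_K)=\O_K^{S_i}$ carrying the $\varpi$-adic topology: these form a direct system of affinoids over $(K,\O_K)$ with compatible $\varpi$-adic rings of definition, and $(\varinjlim_i A_i,\varinjlim_i A_i^+)$ is, after completing the ring of definition $\varinjlim_i A_{i,0}$ $\varpi$-adically, exactly $(\mathrm{Map}_{\cts}(S,K),\mathrm{Map}_{\cts}(S,\O_K))$ as identified in the previous step. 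Proposition~\ref{SW Proposition 2.4.2} then yields $\underline{S}\sim\varprojlim_i\underline{S_i}$, and Corollary~\ref{corollary: perfectoid tilde limit is unique} shows this is the unique perfectoid tilde-limit.

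The only genuinely substantive point — and the one I would write out carefully — is the identification $\varinjlim_i \mathrm{Map}_{\cts}(S_i,\O_K)^{\wedge}_\varpi = \mathrm{Map}_{\cts}(S,\O_K)$ and, relatedly, that the topology on $A=\varinjlim_i A_i$ coming from this $\varpi$-adically completed ring of definition agrees with the natural topology on $\mathrm{Map}_{\cts}(S,K)$; everything else is bookkeeping. This identification is essentially the statement that locally constant $\O_K$-valued functions on the profinite set $S$ are dense in all continuous ones for the uniform ($\varpi$-adic) topology, which is a standard compactness argument. No serious obstacle is expected beyond making these topological identifications precise.
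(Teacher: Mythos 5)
Your approach is essentially the same as the paper's: establish that $\underline{S}$ is perfectoid via the identification $\mathrm{Map}_{\cts}(S,\O_K)/\varpi = \mathrm{Map}_{\cts}(S,\O_K/\varpi)$ and Frobenius surjectivity, then invoke Proposition~\ref{SW Proposition 2.4.2} for the tilde-limit. One small imprecision you should fix: the claimed equality $\mathrm{Map}_{\cts}(S,K) = \varinjlim_i \mathrm{Map}_{\cts}(S_i,K)$ is false as stated, since $K$ is not discrete and so a continuous $K$-valued function on $S$ need not factor through a finite quotient (e.g.\ the inclusion $\Z_p\hookrightarrow K$); the colimit gives only the locally constant functions, which form a dense subalgebra. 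You in fact correct this a sentence later by passing to the $\varpi$-adic completion of the colimit, which is what Proposition~\ref{SW Proposition 2.4.2} requires, so the argument itself goes through — just replace the equality with a density/completion statement.
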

\begin{proof}
	Since $S$ is compact, $\mathrm{Map}_{\mathrm{cts}}(S,K)=\mathrm{Map}_{\mathrm{cts}}(S,\O_K)[\tfrac{1}{\varpi}]$. Perfectoidness now follows from $\mathrm{Map}_{\mathrm{cts}}(S,\O_K)/\varpi=\mathrm{Map}_{\mathrm{lc}}(S,\O_K/\varpi)$. The tilde-limit property follows from Proposition~\ref{SW Proposition 2.4.2}.
\end{proof}
We will need the following basic lemma later on.

	\begin{lemma}\label{affinoid tilde-limits commute with fibre products}
		Let $(A_i, A_i^+)$ and $(B_i, B_i^+)$ be direct systems of affinoids over $(K, \mathcal O_K)$ with compatible rings of definition $A_{i,0}$ and $B_{i,0}$ carrying the $\varpi$-adic topology. Assume that there are perfectoid tilde-limits $\operatorname{Spa}(A, A^+)\sim \varprojlim \operatorname{Spa}(A_i, A_i^+)$ and $\operatorname{Spa}(B, B^+)\sim \varprojlim \operatorname{Spa}(B_i, B_i^+)$. Then \[\operatorname{Spa}(A, A^+)\times_{\operatorname{Spa}(K, \mathcal O_K)}\operatorname{Spa}(B, B^+)\sim\varprojlim (\operatorname{Spa}(A_i, A_i^+)\times_{\operatorname{Spa}(K, \mathcal O_K)} \operatorname{Spa}(B_i, B_i^+))\]
		is also a perfectoid tilde-limit.
	\end{lemma}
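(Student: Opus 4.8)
The plan is to show that the fibre product
$X:=\operatorname{Spa}(A,A^{+})\times_{\operatorname{Spa}(K,\O_K)}\operatorname{Spa}(B,B^{+})$
is itself the asserted perfectoid tilde-limit. So I would first put $X$ into the exact shape to which Proposition~\ref{SW Proposition 2.4.2} applies, deduce the tilde-limit property from that proposition, and then check that the structure ring of $X$ is perfectoid by an integral-level Frobenius computation; uniqueness as \emph{the} perfectoid tilde-limit then follows from Corollary~\ref{corollary: perfectoid tilde limit is unique}.

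For the shape: fibre products of affinoid adic spaces are computed by (completed) tensor products, so $X_i:=\operatorname{Spa}(A_i,A_i^{+})\times_{\operatorname{Spa}(K,\O_K)}\operatorname{Spa}(B_i,B_i^{+})=\operatorname{Spa}(C_i,C_i^{+})$, where, before completion, $C_i=A_i\otimes_K B_i$, the subring $C_i^{+}$ is the integral closure of the image of $A_i^{+}\otimes_{\O_K}B_i^{+}$, and $A_{i,0}\otimes_{\O_K}B_{i,0}$ with its $\varpi$-adic topology is a ring of definition. Since filtered colimits of rings are exact and commute with tensor products and with integral closure, these data assemble into the analogous data for $C:=A\otimes_K B$, with ring of definition $\varinjlim_i(A_{i,0}\otimes_{\O_K}B_{i,0})=A_0\otimes_{\O_K}B_0$, where $A_0:=\varinjlim_i A_{i,0}$ and $B_0:=\varinjlim_i B_{i,0}$ are the rings of definition of $A$ and $B$ produced by Proposition~\ref{SW Proposition 2.4.2}. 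As $\operatorname{Spa}$ is insensitive to completion and $A\hotimes_K B=(A_0\otimes_{\O_K}B_0)^{\wedge}_{\varpi}[\tfrac{1}{\varpi}]$, this identifies $\operatorname{Spa}(\varinjlim_i C_i,\varinjlim_i C_i^{+})$ with $X$. The transition maps $X_j\to X_i$ are affinoid, hence quasi-compact and quasi-separated, so Proposition~\ref{SW Proposition 2.4.2} gives $X\sim\varprojlim_i X_i$.

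It remains to see that $C=A\hotimes_K B$ is a perfectoid $K$-algebra, which I would check on the ring of definition $C_0:=(A_0\otimes_{\O_K}B_0)^{\wedge}_{\varpi}$. It is $\varpi$-adically complete by construction and $\varpi$-torsion free, because $A_0$ and $B_0$ are flat over the valuation ring $\O_K$, tensor products of flat $\O_K$-modules are flat, and $\varpi$-adic completion preserves $\varpi$-torsion freeness over $\O_K$. Since the transition maps $P/\varpi^{n+1}P\to P/\varpi^{n}P$ are surjective for $P:=A_0\otimes_{\O_K}B_0$, a Mittag--Leffler argument gives $C_0/\varpi^{1/p}C_0=P/\varpi^{1/p}P$ and $C_0/\varpi C_0=P/\varpi P$ (here $\varpi^{1/p}\in\O_K$ as $K$ is a perfectoid field, and $\varpi,\varpi^{1/p}$ induce the same topology on $C_0$), i.e.\ $C_0/\varpi^{1/p}=(A_0/\varpi^{1/p})\otimes_{\O_K/\varpi^{1/p}}(B_0/\varpi^{1/p})$ and $C_0/\varpi=(A_0/\varpi)\otimes_{\O_K/\varpi}(B_0/\varpi)$. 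Under these identifications the Frobenius $\Phi\colon C_0/\varpi^{1/p}\to C_0/\varpi$ is, up to the isomorphism $\Phi_{\O_K}\colon\O_K/\varpi^{1/p}\xrightarrow{\sim}\O_K/\varpi$, the tensor product of the Frobenius maps of $A_0$ and of $B_0$, both of which are isomorphisms since $A$ and $B$ are perfectoid. Hence $\Phi$ is an isomorphism, so $C$ is perfectoid and $X$ is an (affinoid) perfectoid space.

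I expect the only genuine work to be bookkeeping: verifying that the formation of fibre products of affinoid rings — rings of definition and $+$-rings alike — is compatible with filtered colimits and $\varpi$-adic completion, so that the input to Proposition~\ref{SW Proposition 2.4.2} is literally of the prescribed form, together with the check of the integral perfectoidness criterion for $C_0$. There is no geometric content beyond Proposition~\ref{SW Proposition 2.4.2}: once the algebra is arranged correctly, both the homeomorphism $|X|\to\varprojlim_i|X_i|$ and the density statement are exactly what that proposition provides.
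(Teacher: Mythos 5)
Your route to the tilde-limit property is genuinely different from the paper's and is, in fact, arguably cleaner: you assemble the affinoid direct system $(C_i,C_i^{+})$ with rings of definition $A_{i,0}\otimes_{\O_K}B_{i,0}$, check that filtered colimits commute with tensor products, integral closure and $\varpi$-adic completion, and then invoke Proposition~\ref{SW Proposition 2.4.2}. The paper does not do this; it checks the two tilde-limit axioms by hand, using Proposition~\ref{SW Proposition 2.4.5} on perfectoid points to identify the underlying topological spaces and a direct density argument on elements. Your approach avoids the slightly informal ``the topological space can be reconstructed from this data'' step in the paper, so on that half you have a real simplification.

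The perfectoidness step, however, has a genuine gap. The Frobenius-isomorphism condition in the definition of a perfectoid $K$-algebra is a statement about $A^{\circ}$ (equivalently, an almost-isomorphism for any ring of integral elements $A^{+}$), not about an arbitrary ring of definition. Your $A_0=\varinjlim_i A_{i,0}$ is a ring of definition assembled from the rigid affinoid models $A_{i,0}$, which have no perfectness property, and the transition maps in the system are not assumed to factor through Frobenius. So the claim that $\Phi\colon A_0/\varpi^{1/p}\to A_0/\varpi$ is an isomorphism ``since $A$ is perfectoid'' is unjustified: one only knows $A_0\subseteq A^{\circ}$, and in general $\Phi$ on $A_0$ will fail surjectivity. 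The same applies to $B_0$, so the tensor-product Frobenius computation does not go through. Repairing this would force you to pass to $A^{\circ}\otimes_{\O_K}B^{\circ}$ (or work in the almost category with $A^{+},B^{+}$) and then identify its completion with $C^{\circ}$ up to almost elements --- which is exactly the content of the proof of \cite[Prop.\ 6.18]{perfectoid}. The paper sidesteps all of this by simply citing that proposition for the perfectoidness of $\operatorname{Spa}(A,A^+)\times_{\operatorname{Spa}(K,\O_K)}\operatorname{Spa}(B,B^+)$. The cleanest fix for your write-up is to keep your Proposition~\ref{SW Proposition 2.4.2} argument for the tilde-limit property but replace the Frobenius computation by a citation of \cite[Prop.\ 6.18]{perfectoid}; that hybrid is correct and a bit more streamlined than the paper's own proof.
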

	\begin{proof}
		The fibre product $\operatorname{Spa}(A, A^+)\times_{\operatorname{Spa}(K, \mathcal O_K)}\operatorname{Spa}(B, B^+)$ exists and is perfectoid by \cite[Prop 6.18]{perfectoid}. In fact, it is represented by $\operatorname{Spa}(C,C^+)$, where $C=A\widehat{\otimes}_KB$ and $C^+$ is the $\varpi$-adic completion of the integral closure of the image of $A^+\otimes_{\mathcal O_K}B^+$.
		
		We first check the condition on topological spaces:
		 Since fibre products commute with limits in the category of sheaves, it follows from Proposition~\ref{SW Proposition 2.4.5} that for any perfectoid field $(D,D^+)$ over $(K,\O_K)$, we have 
\[
 (\operatorname{Spa}(A, A^+)\times_{\operatorname{Spa}(K, \mathcal O_K)}\operatorname{Spa}(B, B^+))(D,D^+)=\varprojlim (\operatorname{Spa}(A_i, A_i^+)\times_{\operatorname{Spa}(K, \mathcal O_K)} \operatorname{Spa}(B_i, B_i^+))(D,D^+).
\]
 Since the topological space can be reconstructed from this data, it follows that the underlying topological spaces of both sides coincide.
		
		It remains to check that if $\varinjlim A_i \rightarrow A$ has dense image and $\varinjlim B_i \rightarrow B$ has dense image, then $\varinjlim (A_i\otimes B_i) \rightarrow A\otimes B$ has dense image. As direct limits commute with tensor products, we have $\varinjlim (A_i\otimes B_i) = (\varinjlim A_i)\otimes (\varinjlim B_i)$. Now density can be checked directly on elements. 
	\end{proof}

\subsection{Perfectoid tilde-limits for rigid groups}

One reason why perfectoid tilde-limits along group homomorphisms are particularly interesting is that these again have a group structure:

\begin{definition}
	A \textbf{perfectoid group} is a group object in the category of perfectoid spaces.
\end{definition}
The category of perfectoid spaces over $K$ has finite products, so this is a well-defined notion.

\begin{lemma}\label{perfectoid tilde-limit is perfectoid group in a functorial way}
	Let $(G_i)_{i\in I}$ be an inverse system of adic groups such that the transition maps are homomorphisms of adic groups. Assune that there is a perfectoid tilde-limit $G_\infty\sim \varprojlim_{i\in I}G_i$.
	\begin{enumerate}
		\item  There is a unique way to endow $G_\infty$ with the structure of a perfectoid group in such a way that all projections $G_\infty\rightarrow G$ are group homomorphisms
		\item Given a morphism of inverse systems of adic groups $(G_i)_{i\in I}\to (H_j)_{j\in J}$ and a perfectoid tilde-limit $H_\infty\sim\varprojlim_{j\in J}H_j$, there is a unique morphism of perfectoid groups $G_\infty\rightarrow H_\infty$
		commuting with all projection maps.
	\end{enumerate}
\end{lemma}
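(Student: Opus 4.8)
The plan is to use the key uniqueness property of perfectoid tilde-limits, namely Corollary~\ref{corollary: perfectoid tilde limit is unique}, together with the fact (Lemma~\ref{affinoid tilde-limits commute with fibre products}, or rather its consequence for general perfectoid tilde-limits, plus Proposition~\ref{SW Proposition 2.4.5}) that finite products of perfectoid tilde-limits are again perfectoid tilde-limits of the corresponding product systems. The point is that all the structure maps of a group object — multiplication, inversion, unit — are morphisms between (finite products of) the spaces in question, and we want to produce them on $G_\infty$ by a universal property argument rather than by hand.

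\emph{Step 1: products of tilde-limits.} First I would record that if $X \sim \varprojlim X_i$ and $Y \sim \varprojlim Y_i$ are perfectoid tilde-limits, then $X \times_{\Spa(K,\O_K)} Y \sim \varprojlim (X_i \times_{\Spa(K,\O_K)} Y_i)$ is a perfectoid tilde-limit. Locally on $X$ and $Y$ this is Lemma~\ref{affinoid tilde-limits commute with fibre products}; one checks the topological-space condition globally using Proposition~\ref{SW Proposition 2.4.5} as in the proof of that lemma, and the density condition is local. By induction the same holds for any finite product, so in particular $G_\infty \times G_\infty \sim \varprojlim (G_i \times G_i)$ is the perfectoid tilde-limit, and it is \emph{the} one by Corollary~\ref{corollary: perfectoid tilde limit is unique}.

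\emph{Step 2: transporting the structure maps.} The multiplication maps $m_i \colon G_i \times G_i \to G_i$ are compatible with the transition maps (these being homomorphisms), so they assemble to a morphism of inverse systems $(G_i \times G_i)_{i} \to (G_i)_{i}$. Composing with the projections $G_\infty \times G_\infty \to G_i \times G_i \to G_i$ gives a compatible family $G_\infty \times G_\infty \to G_i$; I would like to conclude that this factors uniquely through $G_\infty$. This is exactly where part (2) of the lemma is needed, so the honest way to organize the argument is to prove (2) first — it is the general statement that a perfectoid tilde-limit of a target system receives a unique map from a perfectoid tilde-limit of a source system compatibly with projections — and then apply (2) three times (to $m$, to inversion $\iota$, and to the unit $e\colon \Spa(K,\O_K) \to G_i$, with source the one-point system) to get $m_\infty$, $\iota_\infty$, $e_\infty$ on $G_\infty$.

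\emph{Step 3: proving (2) and then deducing (1).} For (2): given $(G_i) \to (H_j)$, composing the projections $G_\infty \to G_i$ with the system maps $G_i \to H_j$ (suitably indexed) yields a compatible family $G_\infty \to H_j$. Since $G_\infty$ is perfectoid and $H_\infty \sim \varprojlim H_j$ is a perfectoid tilde-limit, Proposition~\ref{SW Proposition 2.4.5} identifies $H_\infty(G_\infty, \text{its ring of integral elements})$ — or more precisely $\Hom(G_\infty, H_\infty)$ via $v$-sheaf/representability arguments — with $\varprojlim_j \Hom(G_\infty, H_j)$, giving the desired unique $G_\infty \to H_\infty$. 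The cleanest route is: $|G_\infty| \to \varprojlim|H_j| = |H_\infty|$ is forced as the limit of the $|G_\infty|\to|H_j|$; on a suitable affinoid cover the density condition for $H_\infty$ means that a dense subring of $\O_{H_\infty}$ comes from the $H_j$, hence extends uniquely by continuity to the map on structure sheaves pulled back from $G_\infty$. Then (1) follows formally: apply (2) to obtain $m_\infty, \iota_\infty, e_\infty$, and verify the group axioms (associativity, unit, inverse) — each axiom is an equality of two morphisms $G_\infty^{\times k} \to G_\infty$ which agree after composing with every projection $G_\infty \to G_i$ (because the $G_i$ are groups), hence agree by the uniqueness clause in (2) applied with $H_\infty = G_\infty$. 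Uniqueness of the perfectoid group structure is the same uniqueness clause.

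\emph{Main obstacle.} The only real content is the extension-by-continuity argument in Step 3 showing that a compatible family of morphisms out of a \emph{perfectoid} space $G_\infty$ into the $H_j$ glues to a morphism into $H_\infty$ — i.e. that $H_\infty$ has the expected mapping-out universal property among perfectoid spaces. Everything downstream (products of tilde-limits, verifying the group axioms) is then formal diagram-chasing via the uniqueness of perfectoid tilde-limits. One subtlety to be careful about is matching affinoid covers: the cover of $G_\infty$ witnessing its tilde-limit property and the cover witnessing that of $H_\infty$ need not be compatible with the map $G_\infty \to H_\infty$, so the density argument should be run after refining to a common cover pulled back from a single level $G_i$, which is possible because such opens are cofinal.
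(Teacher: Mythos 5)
Your proposal is correct and follows the same route as the paper: the paper's proof is a one-line appeal to the universal property of perfectoid tilde-limits (Proposition~\ref{SW Proposition 2.4.5}), saying one may then argue as with categorical limits, which is exactly what you unpack by first proving (2) and then deducing (1) for $m$, $\iota$, $e$ via uniqueness. One small remark: the route you mention in passing — identifying $\Hom(G_\infty,H_\infty)$ with $\varprojlim_j\Hom(G_\infty,H_j)$ by applying Proposition~\ref{SW Proposition 2.4.5} to the affinoid perfectoid pieces of $G_\infty$ and gluing — is what the paper has in mind and is actually cleaner than the explicit topological-space-plus-density construction you label as "the cleanest route," since it sidesteps the cover-matching subtlety you flag at the end.
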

\begin{proof}
	These are all consequences of the universal property of the perfectoid tilde-limit, Proposition~\ref{SW Proposition 2.4.5}, which shows that one can argue like in the case of categorical limits.
\end{proof}

Let $G$ be an adic group locally of finite type over $(K,\O_K)$, that is, a group object in the category of adic spaces over $\Spa(K,\O_K)$. Throughout we will always consider commutative groups. The main topic of study of this work is the $[p]$-multiplication tower
\[ \cdots\xrightarrow{[p]}G\xrightarrow{[p]}G.\]
We will usually assume that $G$ is $p$-divisible, i.e.\ that $[p]\colon G\to G$ is surjective.
\begin{question}\label{qu:tilde-limits-of-adic-groups}
	When is there a perfectoid space $G_\infty$ such that $G_\infty \sim \varprojlim_{[p]} G$ is a tilde-limit?
\end{question}

We are primarily interested in the following examples:
\begin{enumerate}	 
	\item Analytifications over $\Spa(K,\O_K)$ of finite type group schemes over $K$. Examples include analytifications of abelian varieties $A$ over $K$ and of tori $T$ over $K$.
	\item Generic fibres of locally topologically finite type formal group schemes over $\mathcal O_K$.
	\item Raynaud's covering space $E$  of an abelian variety with semi-stable reduction.
\end{enumerate}
\begin{remark}
	More generally, one could ask Question~\ref{qu:tilde-limits-of-adic-groups} for families of abelian varieties over $\Spec(R)$ where $R$ is any perfectoid ring. Considering the fibers of such a family in any point of $\Spa(R,R^\circ)$ motivates to also study analytifications over $\Spa(K,K^+)$ where $K^+$ is any open bounded integrally closed subring of $\O_K$. However, one can reduce this case to the one of $K^+=\O_K$.
	
	Indeed, this follows from the following technical observation:
	Let $(X_i)_{i\in I}$ be an inverse system of adic spaces $X_i$ of finite type over $(K,K^+)$ with finite transition maps. Let $X_{i,\eta}:=X_i\times_{\Spa(K,K^+)}\Spa(K,\mathcal O_K)$. Then the following are equivalent: 
	\begin{enumerate}
	\item There is a perfectoid tilde-limit $X_{\infty}\sim \varprojlim_{i \in I}X_{i}$. 
	\item There is a perfectoid tilde-limit $X_{\infty,\eta}\sim \varprojlim_{i \in I}X_{i,\eta}$.
	\end{enumerate}
We will therefore restrict attention to the case of  $K^+=\O_K$ without loss of generality.
\end{remark}

As we have already mentioned in the introduction, Question~\ref{qu:tilde-limits-of-adic-groups} has an affirmative answer in the case of abelian varieties of good reduction by \cite[Lemme~A.16]{Pilloni-Stroh}. More generally:
\begin{lemma}\label{tilde-limit exists and is perfectoid in the good reduction case}
		Let $\mathfrak G$ be a flat commutative formal group scheme over $\O_K$ such that $[p]:\mathfrak G\to\mathfrak G$ is affine. Let $G=\mathfrak G^{\ad}_{\eta}$ be the adic generic fibre. Then $G_\infty := (\varprojlim_{[p]}\mathfrak G)^{\ad}_\eta$ is a perfectoid tilde-limit
		\[G_\infty\sim \varprojlim_{[p]} G. \]
		In particular, if $B$ is an abelian variety of good reduction over $K$, there is a perfectoid tilde-limit $B_\infty\sim \varprojlim B$.
	\end{lemma}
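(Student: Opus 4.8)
The plan is to reduce the statement to an affine one, in which the tilde-limit assertion follows readily from Proposition~\ref{SW Proposition 2.4.2} and perfectoidness is checked via the criterion of \cite[Theorem~5.2]{perfectoid}; the single geometric input will be that multiplication by $p$ on a commutative flat group scheme factors through relative Frobenius modulo $p$. Concretely, label the tower $\cdots\xrightarrow{[p]}\mathfrak G_1\xrightarrow{[p]}\mathfrak G_0=\mathfrak G$ and choose an open cover of $\mathfrak G$ by affine formal schemes $U=\Spf R_0$. Since $[p]$ is affine, so is each composite $[p^n]\colon\mathfrak G_n\to\mathfrak G_0$, whence $U_n:=[p^n]^{-1}(U)=\Spf R_n$ is affine and, being an open of $\mathfrak G$, flat over $\O_K$; the restriction of $\varprojlim_{[p]}\mathfrak G$ over $U$ is $\Spf R_\infty$, where $R_\infty$ is the $\varpi$-adic completion of $\varinjlim_n R_n$, and $R_\infty/p=\varinjlim_n R_n/p$ (as $p$ divides a power of $\varpi$). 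As $U$ varies these pieces form an open cover of $\varprojlim_{[p]}\mathfrak G$ compatible with the transition maps, so their adic generic fibres form a compatible open cover of $G_\infty$ over the tower $\varprojlim_{[p]}G$. Since being perfectoid is local, and since both the homeomorphism of underlying spaces and the affinoid-local density condition in the definition of tilde-limit may be checked on such a cover, it suffices to prove, for each $U$, that — writing $(C,C^+):=(R_\infty[\tfrac{1}{\varpi}],R_\infty^+)$ with $R_\infty^+$ the integral closure of $R_\infty$ in $C$ — the pair $(C,C^+)$ is perfectoid and $\Spa(C,C^+)\sim\varprojlim_n\Spa(R_n[\tfrac{1}{\varpi}],R_n^+)$.

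Taking the $R_n$ with their $\varpi$-adic topologies as compatible rings of definition, Proposition~\ref{SW Proposition 2.4.2} gives $\Spa(C,C^+)\sim\varprojlim_n\Spa(R_n[\tfrac{1}{\varpi}],R_n^+)$ at once, so only perfectoidness of $(C,C^+)$ remains. By \cite[Theorem~5.2]{perfectoid} it suffices to show that $\varinjlim_n R_n^+/\varpi$ — or, equivalently up to almost isomorphism, $R_\infty/\varpi$ — is a perfectoid $\O_K^a/\varpi$-algebra. Flatness over $\O_K^a/\varpi$ is inherited from the $R_n$, and, the $\O_K^a/\varpi$-almost framework absorbing the usual subtleties (e.g.\ that the $p$-th roots used below exist in $\O_K$ because $K$ is algebraically closed), bijectivity of the Frobenius comes down to surjectivity of the Frobenius on $R_\infty/p=\varinjlim_n R_n/p$, i.e.\ to $R_\infty/p$ being semiperfect.

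To establish this, reduce mod $p$. The map $[p]\colon\mathfrak G_{n+1}\to\mathfrak G_n$ becomes the multiplication-by-$p$ endomorphism $[p]_0$ of the commutative flat group scheme $\mathfrak G_0:=\mathfrak G\otimes_{\O_K}\O_K/p$ over $\O_K/p$ — an honest scheme, as $\varpi$ is nilpotent on it — and $[p]_0=V\circ F$, where $F\colon\mathfrak G_0\to\mathfrak G_0^{(p)}$ is the relative Frobenius and $V$ the Verschiebung, so $[p]_0^*=F^*\circ V^*$. On an affine open of $\mathfrak G_0^{(p)}$, with coordinate ring $S\otimes_{\O_K/p,\,\mathrm{Frob}}\O_K/p$, the homomorphism $F^*$ sends $\sum_i s_i\otimes\lambda_i$ to $\sum_i\lambda_i s_i^p=(\sum_i\mu_i s_i)^p$, where $\mu_i^p=\lambda_i$ (the cross terms vanishing since $\mathrm{char}=p$); hence its image consists of $p$-th powers, and this persists after restriction to the relevant affine opens. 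Therefore each transition map $R_n/p\to R_{n+1}/p$ carries every element into the set of $p$-th powers, so every element of $R_\infty/p=\varinjlim_n R_n/p$ is a $p$-th power. Thus $R_\infty/p$ is semiperfect, and the proof is complete.

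For the final assertion, if $B$ is an abelian variety with good reduction, let $\mathcal B/\O_K$ be its abelian scheme model and take $\mathfrak G:=\widehat{\mathcal B}$, the $\varpi$-adic formal completion: it is a flat commutative formal group scheme over $\O_K$ with $\mathfrak G^{\ad}_\eta=B^{\an}$ (using that $\mathcal B$ is proper), and $[p]\colon\mathcal B\to\mathcal B$ is finite locally free, in particular affine, so the lemma applies and gives $B_\infty\sim\varprojlim_{[p]}B$. The main obstacle in all of this is the third step: recognising that passing to the $[p]$-limit makes the coordinate ring semiperfect modulo $p$, hence perfectoid, precisely because multiplication by $p$ factors through relative Frobenius on a commutative flat group scheme in characteristic $p$. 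Everything else is formal — the reduction to the affine case, the tilde-limit statement from Proposition~\ref{SW Proposition 2.4.2}, and the passage from ``semiperfect mod $p$, flat, $\varpi$-adically complete'' to ``perfectoid'' via \cite[Theorem~5.2]{perfectoid}.
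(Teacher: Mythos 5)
Your proof is correct and follows essentially the same approach as the paper: reduce to an affine/local statement, apply \cite[Proposition 2.4.2]{SW} for the tilde-limit property, and obtain perfectoidness from \cite[Theorem 5.2]{perfectoid} via the observation that $[p]$ on a flat commutative group scheme over a characteristic-$p$ base factors through (relative) Frobenius, so that the limit becomes (relatively) perfect mod $p$. The paper phrases the Frobenius step as $\varprojlim_{[p]}\tilde G$ being relatively perfect over $\O_K/\varpi$, whereas you deduce semiperfectness of $R_\infty/p$ by showing $F^*$ lands in $p$-th powers (using that $\O_K$ is perfect to absorb the twist) and lean on flatness plus the almost framework for injectivity of Frobenius; this is a fleshed-out rendering of the same argument rather than a different route.
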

\begin{proof}
	This holds by the same proof as in \cite[Lemme~A.16]{Pilloni-Stroh}, (see also Exercise 4 -- 6 in \cite{Bhatt}):
	 Let $\varpi\in\mathcal O_K$ be a pseudo-uniformiser such that $p\in \varpi\mathcal O_K$. The assumption that $[p]:\mathfrak G\to \mathfrak G$ is affine ensures that the limit $\mathfrak G_{\infty}:=\varprojlim_{[p]}\mathfrak G$ exists.
	 
	The mod $\varpi$ special fibre $\tilde{G} = \mathfrak G\times \Spec(\O_K/\varpi)$ is a group scheme over $\mathcal O_K/\varpi$, so the map $[p]\colon\tilde{G}\rightarrow \tilde{G}$ factors through the relative Frobenius map. Consequently, the mod $\varpi$ special fibre $\varprojlim_{[p]}\tilde{G}$ of $\mathfrak G_{\infty}$ is relatively perfect over $\mathcal O_K/\varpi$. This implies that the adic generic fibre of $\mathfrak G_{\infty}$ is perfectoid by \cite[Theorem~5.2]{perfectoid}.
\end{proof}
\begin{lemma}
	Let $T$ be a torus over $K$. Then there is a perfectoid tilde-limit $T_\infty\sim \varprojlim_{[p]} T$.
\end{lemma}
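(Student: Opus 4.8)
The plan is to reduce to $\mathbb{G}_m^{\an}$ and then handle it by a covering argument, treating each annulus like the good reduction case.

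Since $K$ is algebraically closed, $T$ is split, so there is a group isomorphism $T \cong \mathbb{G}_m^{\,d}$ and hence $T^{\an} \cong (\mathbb{G}_m^{\an})^d$ intertwining $[p]$ with the coordinatewise $p$-power map. Granting a perfectoid tilde-limit $\mathbb{G}_{m,\infty}^{\an} \sim \varprojlim_{[p]} \mathbb{G}_m^{\an}$, I would set $T_\infty := (\mathbb{G}_{m,\infty}^{\an})^d$, which is perfectoid as a finite product of perfectoid spaces (and a perfectoid group by Lemma~\ref{perfectoid tilde-limit is perfectoid group in a functorial way}); that it is a tilde-limit follows because the homeomorphism condition is a product of homeomorphisms ($\varprojlim$ commuting with finite products), while the density condition may be checked on a cover of $T^{\an}$ by products of affinoids, where it holds by Lemma~\ref{affinoid tilde-limits commute with fibre products} applied $d-1$ times (note that $[p]^{-k}$ of a product of opens is the product of the $[p]^{-k}$'s, since $[p]$ acts coordinatewise). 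So it suffices to treat $\mathbb{G}_m^{\an}$.

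Cover $\mathbb{G}_m^{\an}$ by the open affinoid annuli $Y_n := \{\, |\varpi|^{n} \le |x| \le |\varpi|^{-n} \,\}$ for $n \ge 0$, an increasing exhaustive cover with open inclusions $Y_n \hookrightarrow Y_{n+1}$. Fix a compatible system of $p$-power roots $\varpi_k$ of $\varpi$, available since $K$ is algebraically closed. Since the $k$-th multiplication map is $x \mapsto x^{p^k}$, the preimage of $Y_n$ in the $k$-th term of the $[p]$-tower is the annulus $Y_n^{(k)} := \{\, |\varpi_k|^{n} \le |x| \le |\varpi_k|^{-n}\,\} = \Spa(A_{n,k}, A_{n,k}^+)$, with ring of definition $A_{n,k}^+ = \O_K\langle \varpi_k^n x, \varpi_k^n x^{-1}\rangle$ carrying the $\varpi$-adic topology, and the transition map $[p]$ carries $\varpi_k^n x$ to $(\varpi_{k+1}^n x)^p$ and $\varpi_k^n x^{-1}$ to $(\varpi_{k+1}^n x^{-1})^p$. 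Proposition~\ref{SW Proposition 2.4.2} then gives a tilde-limit $Y_{n,\infty} := \Spa(A_{n,\infty}, A_{n,\infty}^+) \sim \varprojlim_k Y_n^{(k)}$, where $A_{n,\infty}^+$ is the $\varpi$-adic completion of $\varinjlim_k A_{n,k}^+$. This is perfectoid by the same argument as in the proof of Lemma~\ref{tilde-limit exists and is perfectoid in the good reduction case}: modulo $\varpi$ the transition maps are $p$-power maps, so $\varprojlim_k (Y_n^{(k)} \times \Spec \O_K/\varpi)$ is relatively perfect over $\O_K/\varpi$ and one concludes by \cite[Theorem~5.2]{perfectoid}; the group structure plays no role in that step. (Equivalently, $\varinjlim_k A_{n,k}^+/\varpi$ is visibly semiperfect, as $\O_K/\varpi$ has surjective Frobenius, and $\varpi$ admits $p$-power roots.) Finally I would glue over $n$: the open immersions $Y_n^{(k)} \hookrightarrow Y_{n+1}^{(k)}$ satisfy $Y_n^{(k)} = Y_n^{(0)} \times_{Y_{n+1}^{(0)}} Y_{n+1}^{(k)}$, so by Proposition~\ref{SW Proposition 2.4.3} the preimage of $Y_n$ in $Y_{n+1,\infty}$ is a tilde-limit of $\varprojlim_k Y_n^{(k)}$, hence canonically $Y_{n,\infty}$ by uniqueness of perfectoid tilde-limits (Corollary~\ref{corollary: perfectoid tilde limit is unique}). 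Thus the $Y_{n,\infty}$ glue to a perfectoid space $\mathbb{G}_{m,\infty}^{\an} := \bigcup_n Y_{n,\infty}$; the homeomorphism $|\mathbb{G}_{m,\infty}^{\an}| \xrightarrow{\sim} \varprojlim_{[p]} |\mathbb{G}_m^{\an}|$ and the density condition can both be checked locally on the $Y_{n,\infty}$, giving $\mathbb{G}_{m,\infty}^{\an} \sim \varprojlim_{[p]} \mathbb{G}_m^{\an}$.

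The only delicate point I anticipate is the perfectoidness of the $Y_{n,\infty}$: one must verify that the ``relatively perfect mod $\varpi$'' step of the good reduction lemma really does apply to the non-group formal schemes $\Spf \O_K\langle \varpi_k^n x, \varpi_k^n x^{-1}\rangle$ — it does, since that step uses only that the transition maps become $p$-power maps modulo $\varpi$, together with the $\O_K$-flatness of $\O_K\langle a,b\rangle/(ab-c)$. The rest is bookkeeping with Propositions~\ref{SW Proposition 2.4.2} and~\ref{SW Proposition 2.4.3} and Lemma~\ref{affinoid tilde-limits commute with fibre products}.
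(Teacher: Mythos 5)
Your proposal is correct, but takes a genuinely different route to the key step. Both arguments begin by splitting $T\cong(\mathbb{G}_m^{\an})^d$ over the algebraically closed field $K$ and reducing to $d=1$ via Lemma~\ref{affinoid tilde-limits commute with fibre products}. For $\mathbb{G}_m^{\an}$ the paper's proof is shorter: it uses the open embedding $\mathbb{G}_m^{\an}=\mathbb{P}^{1,\an}\setminus\{0,\infty\}\subset\mathbb{P}^{1,\an}$, observes that $[p]$ on $\mathbb{G}_m^{\an}$ is the restriction of $\varphi\colon(x:y)\mapsto(x^p:y^p)$, and applies Proposition~\ref{SW Proposition 2.4.3} to the perfectoid tilde-limit $\mathbb{P}^{1,\mathrm{perf}}\sim\varprojlim_\varphi\mathbb{P}^{1,\an}$ already constructed in \cite{perfectoid}. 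You instead build $\mathbb{G}_{m,\infty}^{\an}$ by hand: cover $\mathbb{G}_m^{\an}$ by closed annuli $Y_n$, take the explicit formal models $\Spf\,\O_K\langle\varpi_k^n x,\varpi_k^n x^{-1}\rangle$ of their preimages, run the argument of Lemma~\ref{tilde-limit exists and is perfectoid in the good reduction case} affinoid-locally (the transition maps become Frobenius mod $\varpi$, so the limit is semiperfect mod $\varpi$ and its generic fibre is perfectoid), and glue via Proposition~\ref{SW Proposition 2.4.3} and Corollary~\ref{corollary: perfectoid tilde limit is unique}. Your approach is more self-contained (not relying on the $\mathbb{P}^1$ construction), and the explicit perfectoid annuli give a concrete affinoid cover of $T_\infty$; it is in spirit the formal-model construction the paper alludes to later in Remark 3.8. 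The paper's approach buys brevity and avoids the bookkeeping with formal models and the gluing/homeomorphism verifications, which you correctly flag as the points requiring care.
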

\begin{proof}
Since we assume $K$ algebraically closed, we may choose a splitting $T\cong (\mathbb{G}_m^{\an})^d$ for some $d\in \N$. By Lemma~\ref{affinoid tilde-limits commute with fibre products}, it suffices to consider the case of $d=1$. For this, we may use the open embedding $\mathbb G_m^{\an}= \mathbb P^{1,\an}\setminus\{0, \infty\}\subseteq \mathbb P^{1,\an}$. Sending $(x:y)\mapsto (x^p:y^p)$ defines a morphism $\varphi:\mathbb P^{1,\an}\to \mathbb P^{1,\an}$. The pullback of $\varphi$ to $\mathbb G_m^{\an}$ is precisely $[p]:\mathbb G_m^{\an}\to \mathbb G_m^{\an}$. We can therefore apply Proposition \ref{SW Proposition 2.4.3} to the perfectoid tilde-limit $\mathbb P_1^{\text{perf}}\sim \varprojlim_{\varphi} \mathbb P^{1,\an}$ introduced in \cite{perfectoid}. 
\end{proof}

			
	\begin{example}
		Regarding Question~\ref{qu:tilde-limits-of-adic-groups}, we note
		that if $G$ is not $p$-divisible, $\varprojlim_{[p]}G$ might have a tilde-limit for trivial reasons: For example, let $\mathfrak G_a$  be the $p$-adic completion of the affine group scheme $\mathbb G_a$ over $\mathcal O_K$. Then the trivial group $\operatorname{Spa}(K,\mathcal O_K)\sim \varprojlim_{[p]}(\mathfrak G_a)^{\ad}_{\eta}$ is a perfectoid tilde-limit.
	\end{example}


	\section{Perfectoid tilde-limits of Raynaud extensions}\label{Raynaud extensions as principal bundles of formal and rigid spaces}
	In this section we study the $p$-multiplication tower of the Raynaud extensions associated to abeloid varieties over an algebraically closed perfectoid field $K$. The main result of this section is Proposition \ref{p-F-formal tower exists for E}, which shows that the existence of perfectoid tilde-limits is closed under analytic-locally split extensions, and thus there exists a perfectoid tilde-limit $E_\infty \sim \varprojlim_{[p]} E$.

	\begin{remark}\label{Remark on dealing with general perfectoid fields by Galois descent}
		Everything in this section also works with minor modifications over a general perfectoid field. But we opt to work over an algebraically closed field to simplify the exposition.
	\end{remark}

	\subsection{Raynaud extensions}
	
        We briefly sketch the theory of Raynaud extensions here, and refer the readers to \cite{Bosch-Lut, Lut-survey, Lut} for more details on the setup.

	Let $A$ be an abelian variety over $K$. There exists a unique open rigid analytic subgroup $\overline A$ of $A$ such that $\overline A$ admits a formal model $\overline E$ that is a connected smooth $\mathcal O_K$-group scheme fitting into a short exact sequence of formal group schemes
	\begin{equation}\label{formal Raynaud extension}
	0\rightarrow \overline T \rightarrow \overline E \xrightarrow{\pi} \overline{B}\rightarrow 0,
	\end{equation}
	where $\overline{B}$ is a formal abelian scheme over $\mathcal O_K$ with rigid generic fibre $B:=\overline{B}_\eta$, and $\overline{T}$ is the completion of a torus $T_{\mathcal O_K}$ of rank $r$ over $\mathcal O_K$.
	We set $T:=T_{\mathcal O_K}\otimes_{\mathcal O_K}K$ and denote its analytification also by $T$. Then the rigid generic fibre $\overline{T}_\eta$ of the formal torus $\overline{T}$ canonically embeds into $T$. This induces a pushout exact sequence in the category of rigid groups: More precisely, there exists a rigid group variety $E$ such that the following diagram commutes and the left square is a pushout:
		\begin{equation}\label{Raynaud diagram}
		\begin{tikzcd}
			0 \arrow[r] & \overline{T}_\eta \arrow[d, hook] \arrow[r] & \overline{E}_\eta \arrow[d, hook] \arrow[r] & \overline{B}_\eta \arrow[d,equal] \arrow[r] & 0 \\
			0 \arrow[r] & T \arrow[r] & E \arrow[r] & B \arrow[r] & 0.
		\end{tikzcd}
		\end{equation}
	
	The abelian variety $A$ can be uniformized in terms of $E$ as follows:
	
	\begin{definition}
		A subset $M$ of a rigid space $G$ is called \textbf{discrete} if the intersection of $M$ with any affinoid open subset of $G$ is a finite set of points.
		Let $G$ be a rigid group, then a \textbf{lattice} in $G$ of rank $r$ is a discrete subgroup $M$ of $G$ which is isomorphic to the constant rigid group $\underline{\mathbb Z^r}$. 
	\end{definition}
	
	\begin{theorem}\label{Raynaud uniformisation}
		There exists a lattice $M \subset E$ of rank equal to the rank $r$ of the torus such that the quotient $E/M$ exists as a rigid space and such that there is a natural isomorphism
		\[A=E/M\]
making $E\rightarrow E/M=A$ a rigid group homomorphism. 
	\end{theorem}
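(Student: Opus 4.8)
This is Raynaud's rigid-analytic uniformisation theorem, and I would deduce it from the theory of degenerating abelian varieties of Bosch--L\"utkebohmert (\cite{Bosch-Lut,Lut}). The discussion preceding the statement already extracts from $A$ the open subgroup $\overline A=\overline E_\eta$, the formal extension \eqref{formal Raynaud extension}, and the rigid extension $E$ via the pushout \eqref{Raynaud diagram}; what remains is to produce a discrete subgroup $M\subset E$ isomorphic to $\underline{\mathbb Z^r}$ such that $E/M$ is representable by a rigid space and is isomorphic, as a rigid group, to $A$. Granting this, the projection $E\to E/M=A$ is automatically the rigid group homomorphism asserted.

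To produce $M$, the plan is to bring in the dual abelian variety $\check A$, which also has semi-stable reduction and hence its own Raynaud extension $0\to\check T\to\check E\to\check B\to 0$, with $\check B$ dual to $B$ and $\check T$ a torus of rank $r$. Set $\Lambda:=\Hom(\check T,\mathbb G_m)\cong\mathbb Z^r$ and $X:=\Hom(T,\mathbb G_m)$. Poincar\'e/biextension duality between the degeneration data of $A$ and of $\check A$ provides a canonical homomorphism $\underline\Lambda\to E$ (lifting a homomorphism $\underline\Lambda\to B$), together with a non-degenerate bilinear pairing $X\times\Lambda\to K^\times$ which a choice of polarisation of $A$ renders positive-definite in the sense of that theory. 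I let $M\subset E$ be the image of $\underline\Lambda\to E$.

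The next step is to analyse the translation action of $M$ on $E$. Positive-definiteness of the pairing forces $M$ to be discrete in $E$ and to act freely and properly discontinuously---concretely, $M$ maps to a discrete cocompact subgroup of $E(K)/\overline E_\eta(K)$---so the quotient sheaf $E/M$ is representable by a smooth separated rigid group, $E\to E/M$ is \'etale, and $\overline E_\eta\hookrightarrow E\to E/M$ is an open immersion. Positivity also lets an ample line bundle on $E$ descend to $E/M$, so $E/M$ is proper and connected, hence an abeloid variety; and by construction its Raynaud extension is $E$ with period lattice $M$, i.e.\ it carries the same degeneration datum as $A$. The equivalence of categories between abeloid varieties and degeneration data then yields an isomorphism $E/M\xrightarrow{\ \sim\ }A$, necessarily identifying $\overline E_\eta\hookrightarrow E\to E/M$ with $\overline A\hookrightarrow A$ and turning $E\to E/M$ into the desired homomorphism $E\to A$.

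The main obstacle is the step asserting that the translation action of $M$ is properly discontinuous and that a polarisation produces a positive-definite pairing: this is precisely where the existence of a polarisation on $A$ is indispensable, and it is the technical heart of the Raynaud--Mumford analysis of degenerating abelian varieties. The remaining ingredients---the pushout construction of $E$, the bookkeeping of the period lattice, and the final identification through the classification---are comparatively soft. In the abeloid generality the present paper actually needs, one proceeds in the same way, the positivity that forces $E/M$ to be proper now being part of the very definition of an abeloid variety (and in any case one may first reduce to the projective, i.e.\ abelian-variety, case).
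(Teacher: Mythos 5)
The paper does not prove this statement: Theorem~\ref{Raynaud uniformisation} is recalled as part of the background on Raynaud extensions, and the version for abeloid varieties stated immediately afterwards (Theorem~\ref{Raynaud uniformisation for abeloids}) is cited directly to L\"utkebohmert's book \cite{Lut}, Theorem~7.6.4, with \cite{Bosch-Lut, Lut-survey} given for further background. So there is no in-paper argument to compare against.

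That said, your sketch is a faithful high-level account of the argument in the cited literature: realise the period lattice $M$ as the image in $E$ of the character lattice $\Lambda=\Hom(\check T,\mathbb G_m)$ of the dual torus via the degeneration pairing, use positivity of the pairing to get discreteness and proper discontinuity of the $M$-action (hence representability of $E/M$ with $E\to E/M$ \'etale and $\overline{E}_\eta\hookrightarrow E/M$ an open immersion), descend an ample bundle to see properness, and then identify $E/M\cong A$ through the equivalence of categories with degeneration data. The one point where your write-up is too glib is the closing sentence: in the abeloid setting the required positivity is \emph{not} simply ``part of the definition,'' since one cannot appeal to a polarisation. Handling this is precisely the additional technical content of \cite{Lut}, Chapter~7, and is considerably harder than the projective case. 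Since the paper treats the statement as a black box, this gap does not affect anything here, but you should not present the abeloid case as a formal consequence.
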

	
	The data of the extension~(\ref{formal Raynaud extension}) together with the lattice $M\subset E$ is what we refer to as a Raynaud uniformisation of $A$. This will be the only input we need to construct the perfectoid tilde-limit $A_\infty$. Consequently, our method generalises to the class of rigid groups which admit Raynaud uniformisation, namely to abeloid varieties:
	\begin{theorem}[L\"utkebohmert, \cite{Lut}, Theorem 7.6.4]\label{Raynaud uniformisation for abeloids}
		Let $A$ be an abeloid variety, that is, a connected smooth proper commutative rigid group over $K$. Then $A$ admits a Raynaud uniformisation.
	\end{theorem}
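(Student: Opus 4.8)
The statement is \cite[Theorem~7.6.4]{Lut}; since it is the one structural input of rigid geometry that we do not reprove, I will only sketch the shape of the argument and refer to \cite{Lut} for the considerable details.

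The plan is to pass to a well-chosen formal model of $A$ and read off the Raynaud data from its reduction. The technical heart, and what I expect to be the main obstacle, is the \emph{semi-stable reduction theorem for abeloid varieties}: that $A$ admits a formal model $\mathfrak A$ over $\Spf(\O_K)$ whose special fibre has semi-abelian identity component. Granting this, let $G^0$ denote the identity component of the reduction of $\mathfrak A$. It is a smooth connected commutative group scheme over the algebraically closed residue field, so Chevalley's structure theorem gives a short exact sequence $0\to L\to G^0\to \overline B_0\to 0$ with $L$ linear and $\overline B_0$ an abelian variety; the substance of semi-stability is exactly that $L$ is a torus $\overline T_0$, i.e.\ that there is no additive part. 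Excluding this additive part is where properness of $A$ enters in an essential way — morally an additive part of the reduction would force a subquotient of $A$ of a type incompatible with compactness — and making this rigorous requires a careful analysis of the reduction map and of the relatively compact formal subgroups of $A$. This is the part of \cite{Lut} that does the real work.

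Once semi-abelian reduction is available, the remaining steps are essentially formal. The torus $\overline T_0$ lifts uniquely to a formal torus $\overline T$ over $\O_K$ by rigidity and smoothness of tori, the abelian variety $\overline B_0$ lifts to a formal abelian scheme $\overline B$ by Grothendieck's algebraisation theorem, and the extension class lifts as well, producing a short exact sequence of formal groups $0\to \overline T\to \overline E\to \overline B\to 0$ exactly as in~(\ref{formal Raynaud extension}); here $\overline E$ is the formal model of an open rigid subgroup $\overline A:=\overline E_\eta\subseteq A$. Pushing out along the open immersion $\overline T_\eta\hookrightarrow T$ of the formal torus into the full analytic torus $T=T_{\mathcal O_K}\otimes_{\mathcal O_K}K$ then yields the rigid group $E$ and the commuting diagram~(\ref{Raynaud diagram}).

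It remains to produce the uniformisation $A=E/M$. The inclusion $\overline A=\overline E_\eta\hookrightarrow A$ is a homomorphism of rigid groups with open image, and using the group law on $A$ together with properness — which allows one to extend the homomorphism $\overline T_\eta\hookrightarrow \overline A\hookrightarrow A$ from the unit torus $\overline T_\eta$ to all of $T$ — one extends it to a homomorphism $v\colon E\to A$ that is an open immersion near the identity, hence a local isomorphism, hence \'etale. Its kernel $M=\ker(v)$ is therefore a discrete subgroup of $E$, lying in the "new" directions coming from $T$; finally, since $A=E/M$ is proper, $M$ must be cocompact in $T$, which forces $M$ to be a lattice of rank exactly $r=\dim T$. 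This furnishes the Raynaud uniformisation of $A$, with the data~(\ref{formal Raynaud extension}) together with $M\subset E$, as required.
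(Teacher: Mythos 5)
The paper does not prove this result --- it cites \cite[Theorem~7.6.4]{Lut} as a black box and treats Raynaud uniformisation of abeloid varieties as the one structural input it does not reprove --- so there is no internal argument to compare against. Your sketch is a broadly faithful account of the cited proof: the semi-stable reduction theorem for abeloid varieties is indeed the technical heart, and the subsequent steps (reading off the torus and abelian parts of the reduction via Chevalley's theorem, forming the formal Raynaud extension, pushing out along $\overline{T}_\eta\hookrightarrow T$ to obtain $E$, extending $\overline{E}_\eta\hookrightarrow A$ to a surjective homomorphism $E\to A$, and identifying the kernel as a lattice of rank $r$) match the structure of L\"utkebohmert's argument. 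One imprecision worth flagging: you invoke ``Grothendieck's algebraisation theorem'' to lift $\overline{B}_0$ to a formal abelian scheme $\overline{B}$ and then lift the extension class separately. Algebraisation goes the other way (from formal to algebraic), and in the reference one does not lift $\overline{B}_0$ and the extension independently: the formal group $\overline{E}$ is obtained directly as the maximal open formal subgroup of $A$ with semi-abelian reduction, and then $\overline{T}$ and $\overline{B}=\overline{E}/\overline{T}$ are extracted by the structure theory of formal semi-abelian group schemes, so the formal abelian scheme and the extension class come bundled with $\overline{E}$ rather than being lifted from the special fibre. A similar remark applies to extending the map $\overline{T}_\eta\to A$ to all of $T$, which rests on the theory of meromorphic functions and $\mathbb{G}_m$-fibrations in rigid geometry rather than on properness alone. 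These are details internal to the reference; the overall shape of your sketch is correct.
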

	
	In the situation of Raynaud uniformisation, since $M$ is discrete, the local geometry of $A$ is determined by the local geometry of $E$. We will therefore first study the $[p]$-multiplication tower of $E$ in the rest of this section and will then deduce properties of the $[p]$-multiplication tower of $A$ in the next section.

	 Our strategy is to study the local geometry of $E$ and $\overline{E}$ via $T$ and $B$. An obstacle in doing this is that the categories of formal or rigid groups are not abelian, which makes working with short exact sequences a subtle issue. Another issue is that we would like to work locally on $B$, but the notion of short exact sequences does not make sense if we replace $B$ by an open $U\subseteq B$ which might not itself have a group structure.
	Instead, we have the following crucial lemma, which says that one may regard Raynaud extensions as $T$-torsors of formal schemes.

	\begin{lemma}\label{formal Raynaud sequence is locally split}
		The short exact sequence (\ref{formal Raynaud extension}) admits local sections, that is there is a cover of $\overline{B}$ by formal open subschemes $\overline{U}_i$ such that there exist local sections $s:\overline{U}_i\rightarrow \overline{E}$ of $\pi$. In particular, one can cover $\overline{E}$ by formal open subschemes of the form $\overline{T}\times \overline{U}_i\hookrightarrow \overline{E}$.
	\end{lemma}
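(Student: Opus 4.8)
The plan is to recognise $\pi\colon\overline{E}\to\overline{B}$ as a torsor under the split formal torus $\overline{T}$, and then to invoke the fact that a torsor under a split torus is Zariski-locally trivial (Hilbert's Theorem~90).

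First I would unwind what exactness of (\ref{formal Raynaud extension}) gives us: in the category of fppf abelian sheaves it says exactly that $\overline{T}=\ker\pi$ and that $\pi$ is an fppf epimorphism. The latter provides an fppf cover of $\overline{B}$ over which $\pi$ admits a section, and then the former identifies $\overline{E}$ fppf-locally on $\overline{B}$ with $\overline{T}\times\overline{B}$ via the translation action; hence $\pi$ is an fppf $\overline{T}$-torsor. Since $K$ is algebraically closed we may split $\overline{T}\cong\widehat{\mathbb G}_m^{\,r}$, the $\varpi$-adic completion of $\mathbb G_{m,\O_K}^{\,r}$, so that the torsor corresponds to a class in $H^1_{\mathrm{fppf}}(\overline{B},\widehat{\mathbb G}_m)^{\oplus r}$. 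By Hilbert's Theorem~90 this group is $H^1_{\mathrm{Zar}}(\overline{B},\mathcal O_{\overline{B}}^{\times})^{\oplus r}=\operatorname{Pic}(\overline{B})^{\oplus r}$: concretely, $\overline{E}$ is the fibre product over $\overline{B}$ of the $\mathbb G_m$-torsors $\underline{\operatorname{Isom}}_{\overline{B}}(\mathcal O_{\overline{B}},\mathcal L_j)$ attached to $r$ line bundles $\mathcal L_1,\dots,\mathcal L_r$ on $\overline{B}$ --- indeed these are precisely the line bundles appearing in the Bosch--L\"utkebohmert construction of the Raynaud extension, so one may alternatively read them off directly from \cite{Bosch-Lut}.

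Now, each $\mathcal L_j$ is by definition locally free of rank one, so there is an open cover $\{\overline{U}_i\}$ of $\overline{B}$ over which all of $\mathcal L_1,\dots,\mathcal L_r$ are simultaneously trivial. A choice of trivialisations over $\overline{U}_i$ is a tuple of nowhere-vanishing sections, hence a section of the torsor $\pi^{-1}(\overline{U}_i)\to\overline{U}_i$, i.e.\ a section $s\colon\overline{U}_i\to\overline{E}$ of $\pi$. Finally, given such an $s$, the map $\overline{T}\times\overline{U}_i\to\pi^{-1}(\overline{U}_i)=\overline{E}\times_{\overline{B}}\overline{U}_i$, $(t,u)\mapsto t+s(u)$, is an isomorphism since $\overline{E}$ is a $\overline{T}$-torsor that $s$ trivialises over $\overline{U}_i$; as $\{\overline{U}_i\}$ covers $\overline{B}$, the $\pi^{-1}(\overline{U}_i)$ form the desired cover of $\overline{E}$ by formal opens of the shape $\overline{T}\times\overline{U}_i$.

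The only delicate point --- and the main obstacle --- is purely formal-geometric bookkeeping: one must make sense of $H^1_{\mathrm{fppf}}$, and of the implication ``fppf-locally free of rank one $\Rightarrow$ Zariski-locally free of rank one'', over the $\varpi$-adic formal scheme $\overline{B}$ rather than over an honest scheme. This causes no trouble, since $|\overline{B}|$ coincides with the topological space of the reduction $\overline{B}\otimes_{\O_K}\O_K/\varpi$ together with its lattice of open subschemes, so the classical Hilbert~90 applied to this reduction already produces a trivialising open cover, which is then upgraded $\varpi$-adically on affine opens by a topological Nakayama argument; or one bypasses the cohomological language entirely and uses the explicit line bundles underlying $\overline{E}$ coming from the structure theory of Raynaud extensions.
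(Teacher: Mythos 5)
Your argument is correct and supplies the substance behind the paper's proof, which is just a one-line citation to L\"utkebohmert's Proposition~A.2.5 (phrased there in terms of $\operatorname{Ext}(\overline{B},\overline{T})$). The underlying mechanism is exactly what you spell out: after splitting $\overline{T}\cong\widehat{\mathbb G}_m^{\,r}$, the extension class lands in $\operatorname{Pic}(\overline{B})^{\oplus r}$ via Hilbert~90 (equivalently, the Barsotti--Weil description of $\operatorname{Ext}^1(B,\mathbb G_m)$), and Zariski-local triviality of line bundles yields the local sections; your remark on the formal-scheme bookkeeping is also sound, since on an affine $\varpi$-adically complete base a rank-one projective module that is free modulo~$\varpi$ is already free.
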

	\begin{proof}
		This is proved in Proposition A.2.5 in~\cite{Lut}, where it is fomulated in terms of the group $\operatorname{Ext}(B,T)$. Also see \cite{BL}, \S 1.
	\end{proof}

	\begin{remark}
	In the following, we will freely work with fibre bundles of formal schemes and rigid and perfectoid spaces. For some background material on these we refer to Appendix~\ref{s:appendix}.
	\end{remark}
	
	The sequence~(\ref{formal Raynaud extension}) gives rise to a principal $\overline{T}$-bundle
	$\overline{E}\rightarrow \overline{B}$. The fact that $E$ is obtained from $\overline{E}_\eta$ via push-out along $\overline{T}_\eta\rightarrow T$ can be expressed in terms of the associated fibre bundle by saying that $E = T\times^{\overline{T}_\eta}\overline{E}_\eta$ in the sense of Definition~\ref{definition of Borel construction}.
	\begin{definition}
		We call a sequence of adic groups $0\to T\to E\xrightarrow{\pi} B\to 0$ an analytic-locally split short exact sequence if $T$ is the kernel of $\pi$ and $\pi:E\to B$ is a principal $T$-torsor in the analytic topology. This implies that $B$ is the categorial quotient in adic groups of $T\to E$.
	\end{definition}
	In particular, any Raynaud extension is an analytic-locally split short exact sequence. We can use this to prove the main result of this section, namely that $E_\infty$ is perfectoid. More generally:
	\begin{proposition}[Existence of $G_\infty$ closed under extensions]\label{p-F-formal tower exists for E}
		
		Let $0\to T\to E\to B\to 0$ be any analytic-locally split short exact sequence of adic groups over $K$. Assume that there are perfectoid tilde-limits $T_\infty \sim \varprojlim_{[p]} T$ and $B_\infty\sim \varprojlim_{[p]}B$. Then
		there is a perfectoid tilde-limit $E_\infty\sim \varprojlim_{[p]}E$. It fits into an analytic-locally split short exact sequence of perfectoid groups 
		\[0\to T_\infty\to E_\infty\to B_\infty\to 0.\]
	\end{proposition}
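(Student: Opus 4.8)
The plan is to use that $\pi\colon E\to B$ is a principal $T$-bundle in the analytic topology and to construct $E_\infty$ as a principal $T_\infty$-bundle over $B_\infty$, by propagating the trivialisations of $E\to B$ through the $[p]$-tower. Write $E_n=E$, $B_n=B$, $T_n=T$; multiplication by $p$ makes $(E_n)_n$ a tower of principal $T$-bundles over $(B_n)_n$, each transition map $[p]\colon E_{n+1}\to E_n$ being equivariant for $[p]\colon T\to T$ over $[p]\colon B_{n+1}\to B_n$. The construction is local on $B_\infty$: if $B_\infty$ has an open cover by opens $W$ pulled back from finite levels over each of which the restricted tower admits a perfectoid tilde-limit $E_{\infty,W}$, then these glue, because perfectoid tilde-limits are unique (Corollary~\ref{corollary: perfectoid tilde limit is unique}): restricting via Proposition~\ref{SW Proposition 2.4.3}, the $E_{\infty,W}$ agree canonically on overlaps and the cocycle condition is automatic. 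The glued space $E_\infty$ is perfectoid, and the two conditions defining a tilde-limit are local on $E_\infty$ (the homeomorphism on underlying spaces, and the density condition on an affinoid cover), so $E_\infty\sim\varprojlim_{[p]}E$. Granting this, the exactness and analytic-local splitting of $0\to T_\infty\to E_\infty\to B_\infty\to 0$ are formal: the maps exist by functoriality of perfectoid tilde-limits (Lemma~\ref{perfectoid tilde-limit is perfectoid group in a functorial way}), and the construction exhibits $E_\infty\to B_\infty$ as an analytic-locally trivial $T_\infty$-torsor whose kernel is $T_\infty$.

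To produce the local models I would use the associated-fibre-bundle formalism of Appendix~\ref{s:appendix} together with the good-reduction case already available. For a Raynaud extension, $E$ arises by change of fibre $E=T\times^{\overline T_\eta}\overline E_\eta$ (Definition~\ref{definition of Borel construction}) from the generic fibre of the formal principal $\overline T$-bundle $\overline E\to\overline B$ of Lemma~\ref{formal Raynaud sequence is locally split}; here $\overline E$ is a smooth commutative formal group scheme whose $[p]$-map is affine (because $\overline E\to\overline B$ is affine and $[p]\colon\overline B\to\overline B$ is finite), so Lemma~\ref{tilde-limit exists and is perfectoid in the good reduction case} applies to $\overline E$ and to $\overline T$ directly, giving perfectoid tilde-limits $\overline E_{\eta,\infty}:=(\varprojlim_{[p]}\overline E)^{\ad}_\eta$ and $\overline T_{\eta,\infty}:=(\varprojlim_{[p]}\overline T)^{\ad}_\eta$, the latter an open perfectoid subgroup of $T_\infty$ by Proposition~\ref{SW Proposition 2.4.3}; and $(\varprojlim_{[p]}\overline B)^{\ad}_\eta$ is, by uniqueness, our $B_\infty$. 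One then sets $E_\infty:=\overline E_{\eta,\infty}\times^{\overline T_{\eta,\infty}}T_\infty$ and checks, using the Appendix, that change of fibre commutes with passage to tilde-limits and preserves perfectoidness; locally over $B_\infty$ this bundle looks like $T_\infty\times(\text{open in }B_\infty)$, perfectoid by Lemma~\ref{affinoid tilde-limits commute with fibre products}. For a general analytic-locally split sequence, where no formal model is given, I would instead fix a cover $B=\bigcup_iU_i$ with local sections $s_i\colon U_i\to E$ and refine it, after pulling it back through the $B$-tower, to an open cover of $B_\infty$ by opens $W$ over which the local sections can be chosen \emph{compatibly with the transition maps}; granting such compatible trivialisations, the restricted $E$-tower over $W$ is the product of the tower $([p]\colon T\to T)$ with a tower of preimages in $(B_n)_n$, whose tilde-limit is $T_\infty\times W$ again by Lemma~\ref{affinoid tilde-limits commute with fibre products}.

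The hard part will be exactly this last point: trivialising the bundle compatibly through the $[p]$-tower, equivalently producing local sections of $E_\infty\to B_\infty$. The difficulty is that for $n>0$ the open $[p^n]^{-1}(U_i)$ in the $n$-th copy of $B$ is a preimage under the transition maps and need not lie in any single $U_j$, so the torsor $E\to B$ restricted to it is not literally trivial — only its pushforward along $[p^n]\colon T\to T$ is — and the system of lifting problems $[p]\colon\pi^{-1}(W_{n+1})\to\pi^{-1}(W_n)$ carries an obstruction of $\varprojlim^1$-type assembled from the $T[p^n]$-torsor structures. This is where perfectoidness of $B_\infty$ must be used: on a small enough affinoid perfectoid $W$ one arranges the relevant Picard groups to vanish and, up to refinement, compatible systems of $p$-power roots of the transition cocycles to exist, killing the obstruction — and it is precisely this that the Frobenius argument underlying Lemma~\ref{tilde-limit exists and is perfectoid in the good reduction case} circumvents when a formal model is present. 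Organising this step — either via the good-reduction/Frobenius route on a formal model, or by hand with the lifting problems and the associated-bundle formalism of the Appendix — is the only substantive work; everything else reduces to uniqueness of perfectoid tilde-limits and to Lemma~\ref{affinoid tilde-limits commute with fibre products} for the trivial bundle.
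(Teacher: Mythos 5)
Your strategy --- trivialise $E\to B$ locally, propagate the trivialisations through the $[p]$-tower, glue by uniqueness of perfectoid tilde-limits, and read off the short exact sequence by functoriality --- is indeed the paper's strategy, and the peripheral points (gluing via Corollary~\ref{corollary: perfectoid tilde limit is unique}, locality of the tilde-limit conditions, the product case via Lemma~\ref{affinoid tilde-limits commute with fibre products}, the group sequence via Proposition~\ref{SW Proposition 2.4.5} and Lemma~\ref{perfectoid tilde-limit is perfectoid group in a functorial way}) are handled the same way on both sides. The substantive difference lies in how the local models over $B_\infty$ are produced, and here your proposal diverges from the paper in an instructive way.

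The paper's proof factors $[p^n]\colon E\to E$ through the pullback $E\times_{B,[p^n]}B$, observes that $(E\times_{B,[p^n]}B)|_{U_n}$ is the pullback of $E|_U=T\times U$ along $[p^n]\colon U_n\to U$ and hence split, and then asserts that the restriction of the $[p]$-tower over $E|_U$ is the product tower $\cdots\xrightarrow{[p]\times[p]}T\times U_n\xrightarrow{[p]\times[p]}\cdots\to T\times U$, whose tilde-limit is $T_\infty\times U_\infty$. You instead flag precisely this step as the ``hard part'': the $n$-th term of the restricted tower is $E|_{U_n}$, and what actually follows from the factorisation is that the \emph{pushforward} of $E|_{U_n}$ along $[p^n]\colon T\to T$ is trivial, i.e.\ that the class $[E|_{U_n}]\in H^1(U_n,T)$ is killed by $[p^n]$ --- not that it vanishes. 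Even when each $E|_{U_n}$ is trivial, matching the trivialisations with the transition maps $[p]\colon E|_{U_{n+1}}\to E|_{U_n}$ carries an obstruction in $T(U_{n+1})/[p]T(U_{n+1})$ (a quotient of unit groups by $p$-th powers when $T$ is a torus), which is not automatically zero. The natural map of towers one \emph{does} get, $\alpha_n\colon E|_{U_n}\to (E\times_{B,[p^n]}B)|_{U_n}\cong T\times U_n$, $e\mapsto([p^n]e,\pi e)$, is a $T[p^n]$-cover and intertwines $[p]$ on the source with $\mathrm{id}_T\times[p]$ --- not $[p]\times[p]$ --- on the target; its limit therefore records a nontrivial $T_pT$ worth of information, and does not identify the restricted $E$-tower with the naive product tower. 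Your diagnosis that producing compatible local sections (equivalently, a local splitting of the $T_\infty$-torsor $E_\infty\to B_\infty$) is the real content is therefore exactly right, and the two routes you sketch are the sensible ones: the formal-model/Frobenius argument is precisely the paper's own Remark after the Proposition, and does resolve the Raynaud case cleanly.

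What is missing from your proposal is that neither route is actually carried out. The Frobenius route requires checking that the change-of-fibre construction $\mathfrak T_i\times^{\overline T}\overline E$ yields formal models whose transitions factor through Frobenius mod $p$, and in any case only covers Raynaud extensions, not the arbitrary analytic-locally split sequences in the statement of the Proposition. The ``by hand'' route --- killing the cocycle obstructions on small affinoid perfectoid opens of $B_\infty$ by extracting compatible $p$-power roots --- is the right idea but is only gestured at; making it precise (what cohomology group controls the obstruction, why it dies in the perfectoid limit, and why the resulting system still satisfies the density condition in the definition of tilde-limit) is genuine work that remains to be done. So the proposal identifies the correct key difficulty, aligns with the fallback in the paper's Remark, but is not a complete proof.
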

	\begin{proof}
	We can split $[p^n]:E\to E$ into two morphisms as follows:
			\begin{center}
				\begin{tikzcd}
				0 \arrow[r] & T \arrow[r] \arrow[d, "{[p^n]}"] & E \arrow[r, "\pi"] \arrow[d, "{[p^n]\times \pi}"] & B \arrow[d,equal] \arrow[r] & 0 \\
				0 \arrow[r] & T \arrow[r] \arrow[d,equal] & {E\times_{B,[p^n]} B} \arrow[r] \arrow[d] & B \arrow[d, "{[p^n]}"] \arrow[r] & 0 \\
				0 \arrow[r] & T \arrow[r] & E \arrow[r] & B \arrow[r] & 0
				\end{tikzcd}
				\end{center}
			Here the bottom morphism of short exact sequences is the base-change of the $T$-bundle $E\to B$ along $[p^n]:B\to B$, whereas the top morphism of short exact sequences is the change of fibre along $[p^n]:T\to T$. It is clear that the vertical maps in the middle compose to $[p^n]:E\to E$.
			
			Let now $U\subseteq B$ be an open subspace on which $E\to B$ is split, then $E|_U=T\times U$. Let $U_n$ be the pullback of $U$ along $[p^n]$. Then it follows from the universal property of the fibre product that $E\times_{B,[p^n]}B|_{U_n}\to U_n$ is again split. Consequently, the pullback of the entire diagram to $U$ becomes
					\begin{center}
							\begin{tikzcd}
							0 \arrow[r] & T \arrow[r] \arrow[d, "{[p]}"] & T\times U_n \arrow[r, "\pi"] \arrow[d, "{[p]\times \id}"] & U_n \arrow[d,equal] \arrow[r] & 0 \\
							0 \arrow[r] & T \arrow[r] \arrow[d,equal] & T\times U_n \arrow[r] \arrow[d, "{\id \times [p]}"] & U_n \arrow[d, "{[p]}"] \arrow[r] & 0 \\
							0 \arrow[r] & T \arrow[r] & T\times U \arrow[r] & U \arrow[r] & 0.
							\end{tikzcd}
						\end{center}
				It follows that the inverse system \[\cdots \xrightarrow{[p]}E\xrightarrow{[p]}E\]
			 	restricts over the open $E|_U\subseteq E$ on the base to the inverse system
				\[\xrightarrow{[p]\times [p]}T\times U_n\xrightarrow{[p]\times [p]}T\times U_{n-1}\xrightarrow{[p]\times [p]}\cdots \xrightarrow{[p]\times [p]}E_{|U}. \]
				To prove the theorem, it suffices to show that this system has a perfectoid tilde-limit.
				As there is a perfectoid tilde-limit $B_\infty\sim \varprojlim_{[p]} B$, by pullback to $U$ there is also a perfectoid tilde-limit $U_\infty\sim \varprojlim U_n$. Since there is also a perfectoid tilde-limit $T_\infty\sim \varprojlim_{[p]} T$,  Lemma~\ref{affinoid tilde-limits commute with fibre products} says that the above system has perfectoid tilde-limit $T_\infty\times U_\infty$, as desired.
				
				By Proposition~\ref{SW Proposition 2.4.5}, the maps $0\to T\to E\to B\to 0$ in the limit over $[p]$ now produces maps $0\to T_\infty \to E_\infty\to B_\infty\to 0$. By the above local description of $E_\infty$, the latter is an analytic-locally split short exact sequence.
	\end{proof}
	
	\begin{remark}
		In the case of Raynaud extensions, there is an alternative proof that also constructs a formal model $\mathfrak E_\infty$ of $E_\infty$, like in Lemma~\ref{tilde-limit exists and is perfectoid in the good reduction case}. For this, one first takes a sequence of formal models 
		\[\cdots \to \mathfrak T_{2}\xrightarrow{[\mathfrak p]_1}\mathfrak T_1\xrightarrow{[\mathfrak p]_1} \mathfrak T_0\]
		of $\cdots\xrightarrow{[p]} T\xrightarrow{[p]} T$. This can be done in such a way that each $[\mathfrak p]_i$ reduces to the relative Frobenius mod $p$. Then $\mathfrak T_\infty:=\varprojlim_{[\mathfrak p]_i}\mathfrak T_i$ is a formal model of the perfectoid space $T_\infty$ (giving an alternative proof that $T_\infty$ is a perfectoid tilde-limit). When we set $\mathfrak E_i:=\mathfrak T_i\times^{\overline{T}}\overline{E}$, we get an inverse system
		\[\cdots \to \mathfrak E_{2}\xrightarrow{[\mathfrak p]_1}\mathfrak E_1\xrightarrow{[\mathfrak p]_1} \mathfrak E_0\]
		with transition maps that factor through the relative Frobenius map mod $p$. Thus the generic fibre of $\mathfrak E_\infty:=\varprojlim_{[\mathfrak p]_i}\mathfrak E_i$ is a perfectoid tilde-limit of $\cdots \xrightarrow{[p]}E\xrightarrow{[p]}E$.
	\end{remark}
	
	\begin{remark}\label{general fields for E}
	With some work, the arguments in this section can be extended to any perfectoid base field. For instance, the Raynaud uniformisation of Theorem \ref{Raynaud uniformisation} might only be defined over a finite extension $L$ of $K$. Our argument then gives a perfectoid space over the (necessarily perfectoid) field $L$. We can then use Galois descent to get a perfectoid space over our original field $K$. This uses that the quotient of a perfectoid space by a finite group often remains perfectoid: see Theorem 1.4 of \cite{Hansen_quotients} for details. Finally, one checks that this Galois descent commutes with tilde-limits. 
	\end{remark}

	
	\section{The case of abeloid varieties}\label{The case of abeloid varieties}
	We now prove Theorem~\ref{thm:main_thm_intro}, building on the preceding sections. Recall our setup: Let $A$ be an abeloid variety over $K$. Let $E$ be the Raynaud extension associated to $A$ from Proposition~\ref{Raynaud uniformisation for abeloids}, which is an extension of an abeloid variety $B$ of good reduction by a split rigid torus $T$ of rank $r$, and $M\subset E$  is a lattice of rank $r$ such that $A=E/M$. 

By Proposition~\ref{Raynaud uniformisation for abeloids}, the quotient map $\pi\colon E\to A$ is locally split in the analytic topology on $A$: As the action of $M$ on $E$ is totally discontinuous, for every point  $x\in A$ there is an open neighbourhood $U'$ of $E$ such that $\pi$ maps isomorphically onto an open $U:=\pi(U')$ containing $x$. Here we are careful to distinguish $U'\subset E$ and $U\subset A$, even though the two are isomorphic via $\pi$.

We fix from now on a cover $\mathfrak U$ of $A$ by opens $U$ of this form.

The pullback of $U'$ along $[p]\colon A\to A$ will in general be bigger than the pullback of $U$ along $[p]:E\to E$: e.g. in characteristic 0, the first is an \'etale $A[p]$-torsor, whereas  the latter is an \'etale $E[p]$-torsor, and by the Snake Lemma we have a short exact sequence
\[0\to E[p]\to A[p]\to M/pM\to 0\]

To relate the pullbacks, we subdivide the tower 
\[
\cdots\xrightarrow{[p]}A \xrightarrow{[p]}A\xrightarrow{[p]}A
\]
into two partial towers. For this we make some auxiliary choices: Since $K$ is algebraically closed, we can choose lattices $M_n\subseteq E$ such that $M_0=M$ and $[p]\colon E\rightarrow E$ restricts to isomorphisms $M_{n+1}\rightarrow M_n$ for all $n$.
	
	\begin{remark}\label{remark: Definition of the D_n}
		Such a choice is equivalent to the choice of subgroups $D_n\subseteq A[p^n]$ of order $p^{rn}$ for all $n$ such that $pD_{n+1}=D_n$ and $D_n+E[p^n]=A[p^n]$. Namely,
		given the lattices $M_{n}$, we obtain the desired torsion subgroups by setting $D_n:=M_{n}/M$. This is because any such lattice gives a splitting of the short exact sequence $0\rightarrow E[p^n]\rightarrow A[p^n]\rightarrow M/p^nM \rightarrow 0$.
		
		Conversely, given subgroups $D_n\subseteq A[p^n]$ with properties as above, we recover $M_n$ as the kernel of $E\rightarrow A\rightarrow A/D_n$.
		
		One might call the choice of $D_n$ for all $n$ a partial anticanonical $\Gamma_0(p^\infty)$-structure, because if $B$ admits a canonical subgroup (that is, if it satisfies a condition on its Hasse invariant), the choice of a (full) anticanonical $\Gamma_0(p^\infty)$-structure on $A$ is equivalent to the choice of a partial anticanonical $\Gamma_0(p^\infty)$-structure on $A$ and an anticanonical $\Gamma_0(p^\infty)$-structure on $B$. Note however that $A$ always has a partial anticanonical subgroup even if $B$ does not have a canonical subgroup.
	\end{remark}
	
	Following the remark, denote by $D_n$ the torsion subgroup $M_n/M\subset A$. The quotient $A_n:=A/D_n = E/M_n$ is then another abeloid variety over $K$ and the quotient map $v^n\colon A=E/M\rightarrow A_n=E/M_n$ is an isogeny of degree $p^{rn}$  through which  $[p^n]\colon A\rightarrow A$ factors. The $[p]$-multiplication tower now splits into two towers, one written vertically, the other horizontally:
		\begin{equation}\label{p-multiplication tower of E/M splits into vertical and horizontal tower}
		\begin{tikzcd}[column sep={1.1cm,between origins},row sep={0.6cm,between origins}]
			\ddots \arrow[rd] &  &  & \vdots &  & \vdots \\
			& A \arrow[rr,"v"] \arrow[rrdd, "{[p]}"'] &  & A_1 \arrow[rr,"v"] \arrow[dd,"{[p]_E}"] &  & A_2 \arrow[dd,"{[p]_E}"] \\
			&  &  &  &  &  \\
			&  &  & A \arrow[rrdd, "{[p]}"'] \arrow[rr,"v"] &  & A_1 \arrow[dd,"{[p]_E}"] \\
			&  &  &  &  &  \\
			&  &  &  &  & A.
		\end{tikzcd}
		\end{equation}
		Since each $D_n=M_n/M$ is finite \'etale, all horizontal maps are finite \'etale. The vertical tower on the other hand fits into a commutative diagram which compares it to the $[p]$-tower of $E$:
		\begin{equation}\label{F-tower for E/M}
		\begin{tikzcd}[column sep={1.1cm,between origins},row sep={0.5cm,between origins}]
			&\vdots&\vdots&\vdots&\\
			0 \arrow[r] & M_1 \arrow[dd, "\cong"] \arrow[r] & E \arrow[dd, "{[p]}"] \arrow[r] & A_1 \arrow[dd, "{[p]_E}"] \arrow[r] & 0 \\
			\\
			0 \arrow[r] & M \arrow[r] & E \arrow[r] & A \arrow[r] & 0.
		\end{tikzcd}
		\end{equation}
		\begin{definition}
			Let $M_\infty:=\varprojlim_{n\in\N} M_n$ be the limit of the left vertical tower. 
		\end{definition}
		We note that $M_\infty$ is an actual limit, not just a tilde-limit, because the transition maps are isomorphisms. In particular, the projection $M_\infty\to M$ is an isomorphism as well. 
		By Proposition \ref{SW Proposition 2.4.5}, we get a natural map $M_\infty\to E_\infty$. 
		\begin{proposition}\label{M_infty->E_infty->A'_infty}
			There is a perfectoid tilde-limit $A'_\infty\sim \varprojlim_{n\in\N} A_n$. It fits into an analytic-locally split short exact sequence of perfectoid groups 
			\[0\to M_\infty\to E_\infty\to A'_\infty\to 0.\]
		\end{proposition}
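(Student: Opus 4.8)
The plan is to mimic the proof of Proposition~\ref{p-F-formal tower exists for E}: use the analytic-local splitting of $\pi\colon E\to A$ along the cover $\mathfrak U$ to reduce the vertical tower to a product situation, then invoke Lemma~\ref{affinoid tilde-limits commute with fibre products} together with the already-constructed $E_\infty$. Concretely, fix $U\in\mathfrak U$ with its distinguished lift $U'\subseteq E$, so $\pi\colon U'\xrightarrow{\sim}U$. The diagram~(\ref{F-tower for E/M}) shows the vertical tower $\cdots\to A_1\xrightarrow{[p]_E}A$ is, over the base, the quotient of the constant tower $\cdots\to E\xrightarrow{[p]}E$ by the tower of lattices $\cdots\to M_1\xrightarrow{\cong}M$. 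First I would observe that because each $M_n$ acts totally discontinuously on $E$ and $M_n\supseteq M$, the open $U'$ (shrinking it if necessary, which is harmless since we only need a cover) maps isomorphically onto an open $U'_n\subseteq A_n$ under the quotient $E\to A_n$; moreover these identifications are compatible with $v\colon A_n\to A_{n-1}$ and with $[p]_E$, so that the vertical tower restricted over $U$ is simply the constant tower $\cdots\xrightarrow{[p]}U'\xrightarrow{[p]}U'$ where $U'\subseteq E$ is open.

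Now apply Proposition~\ref{SW Proposition 2.4.3} to the open immersion $U'\hookrightarrow E$ together with the perfectoid tilde-limit $E_\infty\sim\varprojlim_{[p]}E$ from Proposition~\ref{p-F-formal tower exists for E}: the preimage $U'_\infty\subseteq E_\infty$ is a perfectoid tilde-limit of $\varprojlim_{[p]}U'$, hence of the restriction of the vertical tower over $U$. Since the $U$ cover $A$ and hence (via the identifications above) the $U'_n$ are compatible with transition maps, these local perfectoid tilde-limits glue: one checks that the gluing data for the $A_n$ pulls back through $E_\infty$, producing a perfectoid space $A'_\infty$ with $A'_\infty\sim\varprojlim_n A_n$. (This gluing is the formal analogue of the argument already used implicitly to globalise $E_\infty$ in Proposition~\ref{p-F-formal tower exists for E}; one uses that tilde-limits can be checked on an open cover and that $|A'_\infty|=\varprojlim|A_n|$ since the transition maps are finite.) Finally, Proposition~\ref{SW Proposition 2.4.5} applied to the compatible system of short exact sequences in~(\ref{F-tower for E/M}) yields maps $0\to M_\infty\to E_\infty\to A'_\infty\to 0$; the outer terms are perfectoid ($M_\infty$ is discrete, isomorphic to the constant group $M$), and exactness plus the analytic-local splitting are immediate from the local description $E_\infty|_{U}\cong M_\infty\times A'_\infty|_{U}$ coming from $E|_{U'}\cong M\times U'$ traced through the tower.

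The main obstacle I anticipate is the globalisation/gluing step: making precise that the local perfectoid tilde-limits $U'_\infty$, which a priori live inside $E_\infty$, descend to a well-defined perfectoid space $A'_\infty$ mapping to the $A_n$. One must verify that on overlaps $U\cap V$ the two identifications of $E_\infty$-opens with the relevant cover of $A'_\infty$ agree — i.e. that the difference is translation by an element of $M$, which is locally constant — and then appeal to the fact that perfectoid spaces, being adic spaces, can be glued along open covers and that the tilde-limit property (homeomorphism on topological spaces plus density of functions on an affinoid cover) is local on the target. Given that $M$ acts by a discrete, hence locally on $A$ trivial, group of translations, this is conceptually routine but requires care in bookkeeping; the short exact sequence then falls out for free.
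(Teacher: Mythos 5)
Your overall strategy is the same as the paper's: work locally on $A$ via the cover $\mathfrak U$, identify the vertical tower over $U$ with a tower of opens in $E$, and then use Proposition~\ref{SW Proposition 2.4.3} applied to $E_\infty$. However, the first "observation" contains a genuine error. You claim that $U'$ (after a single shrinkage) maps isomorphically onto an open of $A_n=E/M_n$ for \emph{every} $n$. This is false: the lattices $M=M_0\subset M_1\subset\cdots$ form a strictly increasing chain, and for $n$ large $M_n$ contains nonzero elements arbitrarily close to the identity of $E$ (e.g.\ in the case $E=\mathbb G_m^{\an}$, $M=q^{\mathbb Z}$, the lattice $M_n$ is generated by a $p^n$-th root of $q$, which tends to $1$). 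Hence no fixed nonempty open $U'\subset E$ injects into $E/M_n$ for all $n$, and one cannot repair this by shrinking $U'$ once and for all. Relatedly, the "constant tower $\cdots\xrightarrow{[p]}U'\xrightarrow{[p]}U'$'' does not make sense as written, since $[p]$ is not an endomorphism of $U'$.

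The correct local statement, which is what the paper proves and what you need, is that the pullback $U'_n:=[p^n]^{-1}(U')\subseteq E$ maps isomorphically onto an open $U_n:=\pi_n(U'_n)\subseteq A_n$: given $x,y\in U'_n$ with $x-y\in M_n$, one has $[p^n]x-[p^n]y\in[p^n]M_n=M$ and both $[p^n]x,[p^n]y\in U'$, whence $[p^n]x=[p^n]y$; then $x-y\in M_n\cap E[p^n]=0$ because $[p^n]|_{M_n}$ is injective. With this identification, the vertical tower over $U$ becomes the inverse system $(U'_n)_n$ with transition maps $[p]$, and — conveniently — your application of Proposition~\ref{SW Proposition 2.4.3} to $U'\hookrightarrow E$ already produces a perfectoid tilde-limit of exactly this tower, namely $U_\infty=U'\times_E E_\infty\sim\varprojlim_n U'_n$. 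So once the faulty claim is replaced by the statement about $U'_n$, the remainder of your argument (gluing the local tilde-limits and deducing the analytic-locally split short exact sequence from the compatible local splittings $M_n\times U'_n\to U_n$) goes through and matches the paper's proof.
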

		\begin{proof}
			We work locally on opens $U'\subset E$ mapping isomorphically to $U$ in our cover $\mathfrak U$ of $A$. Write $\pi_n\colon E\to A_n$ for the quotient map. Since the rows in~\eqref{F-tower for E/M} are exact, and the transition maps on the left are isomorphisms, it follows that for each $n\in \mathbb{N}$, the quotient map $\pi_n$  sends the pullback $U'_n:=[p^n]^{-1}(U')$ isomorphically onto $U_n:=\pi_n(U'_n)\subseteq A_n$. Thus~\eqref{F-tower for E/M} is locally of the form
				\begin{equation}\label{F-tower for E/M-local}
				\begin{tikzcd}
				0 \arrow[r] & M_1\arrow[d, "\cong"] \arrow[r] &  M_1\times U_1' \arrow[d, "{[p]}"] \arrow[r] &U_1 \arrow[d, "{[p]_E}"] \arrow[r] & 0 \\
				0 \arrow[r] & M \arrow[r] & M\times U' \arrow[r] & U \arrow[r] & 0.
				\end{tikzcd}
				\end{equation}
			Let $U_\infty$ be the pullback of $U'$ along $E_\infty\to E$. We have $U_\infty\sim \varprojlim U'_n\cong \varprojlim U_n$. The system $(U_n)_{n\in \mathbb{N}}$ thus has a perfectoid tilde-limit. This shows that $\varprojlim A_n$ has a perfectoid tilde-limit. We can therefore apply Proposition \ref{SW Proposition 2.4.5} to get a morphism $E_\infty\rightarrow A'_\infty$, obtaining the desired short exact sequence in the limit over diagram \eqref{F-tower for E/M} since the transition maps in \eqref{F-tower for E/M-local} respect the splitting. 
		\end{proof}
	
We will keep the notation introduced in the above proof: $U'$ is an open of $E$ mapping isomorphically to $U\subset A$. The open $U'_n:=[p^n]^{-1}(U')\subset E$ maps isomorphically to its image $U_n\subset A_n$ and we have a commutative diagram with exact rows
\[
 		\begin{tikzcd}
		0 \arrow[r] & M_n \arrow[d, equal] \arrow[r] & M_n\times U'_n \arrow[d,hook] \arrow[r] &  U_n\arrow[d, hook] \arrow[r] & 0 \\
		0 \arrow[r] & M_n \arrow[r] & E \arrow[r,"{\pi_n}"] & A_n \arrow[r] & 0.
		\end{tikzcd}
\]
	
	To construct a tilde-limit for $\varprojlim A$, we use the fact that the horizontal maps in diagram~(\ref{p-multiplication tower of E/M splits into vertical and horizontal tower}) are all finite \'etale. They are even finite covering maps, in the following sense:
	\begin{lemma}\label{horizontal map is covering map}
		For any $n\geq 0$, the preimage of $U_n\subset A_n$ under the horizontal map $v^{n}\colon A\rightarrow A_n$ is isomorphic to $p^{rn}$ disjoint copies of $U_n$. More canonically, it is isomorphic to $D_{n}\times U_n$, where $D_n=M_n/M$ (see Remark~\ref{remark: Definition of the D_n}).
	\end{lemma}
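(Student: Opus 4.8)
The plan is to trace through the identity $v^n\circ\pi=\pi_n$ of maps $E\to A_n$, where $\pi\colon E\to A=E/M$ and $\pi_n\colon E\to A_n=E/M_n$ are the two quotient maps, using the local description of $\pi_n$ recorded in the commutative diagram immediately preceding the lemma. That diagram says precisely that over $U_n$ the map $\pi_n$ is the trivial $\underline{M_n}$-torsor $\underline{M_n}\times U'_n\to U_n$ realised inside $E$, the trivialising section being $(\pi_n|_{U'_n})^{-1}$. Both $\pi$ and $\pi_n$ are analytic-locally trivial torsors under the discrete groups $\underline M\subseteq\underline{M_n}$ coming from the local splittings used in the Raynaud uniformisation (this is recorded for $\pi$ in the paragraph fixing the cover $\mathfrak U$, and for $\pi_n$ it is exactly the displayed diagram above).

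The argument then runs as follows. Since $v^n\circ\pi=\pi_n$, the open subspace $\pi_n^{-1}(U_n)=\underline{M_n}\times U'_n\subseteq E$ equals $\pi^{-1}\big((v^n)^{-1}(U_n)\big)$; it is stable under translation by $\underline M\subseteq\underline{M_n}$, and therefore $(v^n)^{-1}(U_n)$ is its quotient by $\underline M$. I would then identify this $\underline M$-action explicitly: translation by $\mu\in M$ on $E$ sends $m+u'\mapsto(\mu+m)+u'$ for $m\in M_n$, $u'\in U'_n$, so under the trivialisation it is translation by $\mu$ on the $\underline{M_n}$-factor via the inclusion $M\hookrightarrow M_n$, and the identity on $U'_n$. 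Hence
\[(v^n)^{-1}(U_n)\;=\;(\underline{M_n}\times U'_n)/\underline M\;=\;\underline{M_n/M}\times U'_n\;=\;\underline{D_n}\times U'_n,\]
and composing with the isomorphism $\pi_n\colon U'_n\xrightarrow{\ \sim\ }U_n$ gives the asserted $(v^n)^{-1}(U_n)\cong\underline{D_n}\times U_n$. Since $[p^n]\colon E\to E$ restricts to $M_n\xrightarrow{\ \sim\ }M$ and is the multiplication-by-$p^n$ map, we have $M=p^nM_n$, so $D_n=M_n/M\cong(\Z/p^n\Z)^r$ has order $p^{rn}$ and $\underline{D_n}\times U_n$ is $p^{rn}$ disjoint copies of $U_n$. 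For the final (``more canonical'') clause, the same computation shows that the deck action of $D_n=\ker(v^n)$ on $(v^n)^{-1}(U_n)$ corresponds to translation on the $\underline{D_n}$-factor, so the identification is $D_n$-equivariant.

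The only point demanding care — and the one I would treat as the main, if minor, obstacle — is the formal bookkeeping: that $\pi_n^{-1}(U_n)$ is literally $\pi^{-1}\big((v^n)^{-1}(U_n)\big)$ as open subspaces of $E$, and that restricting the $\underline M$-quotient map $\pi$ to an $\underline M$-stable open again exhibits the image as the categorical quotient by $\underline M$. Both follow once one knows $\pi$ is an analytic-locally trivial $\underline M$-torsor and $\pi_n$ an analytic-locally trivial $\underline{M_n}$-torsor, together with the inclusion $M\subseteq M_n$ of the lattices fixed in Remark~\ref{remark: Definition of the D_n}; no further analytic input is needed since all maps involved are (finite) \'etale.
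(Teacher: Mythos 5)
Your proof is correct and follows essentially the same route as the paper: the paper also identifies $(v^n)^{-1}(U_n)$ as the quotient of $\pi_n^{-1}(U_n)=\underline{M_n}\times U'_n$ by the translation action of $\underline{M}\subseteq\underline{M_n}$, via the same commutative diagram of short exact sequences restricted over $U_n$, and concludes from $D_n=M_n/M\cong(\Z/p^n\Z)^r$ and $U'_n\cong U_n$. You merely spell out the intermediate steps (the $\underline{M}$-stability of $\pi_n^{-1}(U_n)$ and the explicit computation of the $\underline{M}$-action on the trivialisation) that the paper leaves implicit.
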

	\begin{proof}
		For the first part, we observe that the map $v^n$ fits into a commutative diagram
			\begin{equation}\label{v-tower for E/M}
			\begin{tikzcd}
			0 \arrow[r] & M \arrow[d, hook] \arrow[r] & E \arrow[d,equal] \arrow[r] &  A\arrow[d, "{v^{n}}"] \arrow[r] & 0 \\
			0 \arrow[r] & M_n \arrow[r] & E \arrow[r] & A_n \arrow[r] & 0
			\end{tikzcd}
			\end{equation}
	where the map on the left is the natural inclusion. Upon restriction to $U_n\subset A_n$, this becomes
					\begin{equation}\label{v-tower for E/M-local}
		\begin{tikzcd}
		0 \arrow[r] & M \arrow[d, hook] \arrow[r] & M_n\times U'_n \arrow[d,equal] \arrow[r] &  (v^{n})^{-1}(U_n)\arrow[d, "{v^{n}}"] \arrow[r] & 0 \\
		0 \arrow[r] & M_n \arrow[r] & M_n\times U'_n \arrow[r] & U_n \arrow[r] & 0
		\end{tikzcd}
		\end{equation}
	and the claim follows the fact that $M$ is a discrete lattice of rank $r$, and from $U_n'\cong U_n$.
	\end{proof}
	\begin{definition}
The $[p]$-multiplication on $E$ maps $M_{n+1}$ onto $M_n$ and therefore the $[p]$-multiplication tower of $A$ induces a tower
 \[\cdots \xrightarrow{[p]}D_{n+1}=M_{n+1}/M\xrightarrow{[p]}D_n=M_n/M\rightarrow\cdots.\]
  Since $K$ is algebraically closed, the finite \'etale groups $D_n$ are already constant.  By Lemma~\ref{l:pro-finite-perfectoid-spaces}, there is a profinite perfectoid group $D_\infty$ such that
  \[D_\infty\sim \varprojlim_n D_n.\]
 \end{definition}
The quotient maps $M_n\to D_n=M_n\otimes_\Z \Z/p^n$ in the limit give rise to a closed immersion of perfectoid groups $M_\infty\hookrightarrow D_\infty= M_\infty\otimes_{\Z}\Z_p$.

Theorem~\ref{thm:main_thm_intro} is now part of the following theorem:	
	\begin{theorem}\label{tilde-limit of tilde-limits of partial towers is tilde-limit of whole tower}
		\begin{enumerate}
		\item There is a perfectoid space  $A_\infty$ which is a tilde-limit of $\varprojlim_{[p]}A$.
		\item It is independent up to canonical isomorphism of the auxiliary choice of lattices $M_n$ with $D_n=M_n/M$, but it remembers the choice as a pro-finite \'etale closed subgroup $D_\infty \subseteq A_\infty$. 
		\item The preimage of any $U\in \mathfrak U$ under the projection $A_\infty \rightarrow A$ is isomorphic to $D_\infty \times U_\infty$. 
		
		\item 	There is a natural map of analytic-locally split short exact sequences of perfectoid groups		
		\begin{center}
			\begin{tikzcd}
				0 \arrow[r] & M_{\infty} \arrow[r] \arrow[d, hook] & E_\infty \arrow[d, hook] \arrow[r] & A'_\infty \arrow[d,equal] \arrow[r] & 0 \\
				0 \arrow[r] & D_\infty \arrow[r] & A_\infty \arrow[r] & A'_\infty\arrow[r] & 0.
			\end{tikzcd}
		\end{center}
		\item One can describe $A_\infty$ as the associated fibre bundle
		\[A_\infty = D_\infty\times^{M_\infty}E_\infty.\]
		In particular, we have an analytic-locally split short exact sequence of perfectoid groups
		\[0\rightarrow M_\infty\rightarrow D_\infty \times E_\infty \rightarrow A_\infty\rightarrow 0\]
		where the map on the left is the antidiagonal embedding of $M_\infty$ into $D_\infty\times E_\infty$.
		\end{enumerate}
	\end{theorem}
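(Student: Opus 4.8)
The plan is to construct $A_\infty$ by gluing local pieces along the cover $\mathfrak U$, and then to read off assertions (2)--(5) from this construction together with the uniqueness of perfectoid tilde-limits (Corollary~\ref{corollary: perfectoid tilde limit is unique}).

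For (1) and (3), first fix $U\in\mathfrak U$; refining the cover we may assume $U$, hence $U'\cong U$ and each $U'_n=[p^n]^{-1}(U')\subseteq E$, affinoid (the $U'_n$ are finite over $U'$ since $[p]\colon E\to E$ is finite, being locally on $B$ a product of the finite maps $[p]$ on $T$ and on $B$). By Proposition~\ref{SW Proposition 2.4.3} the $[p]$-multiplication tower of $A$ restricts over $U$ to the inverse system $(V_\ell)_{\ell\ge 0}$ with $V_\ell:=[p^\ell]^{-1}(U)\subseteq A$ and transition maps $[p]$. Factor $[p^\ell]=q_\ell\circ v^\ell\colon A\xrightarrow{v^\ell}A_\ell\xrightarrow{q_\ell}A$, where $q_\ell\colon A_\ell=E/M_\ell\to E/M=A$ is induced by $[p^\ell]\colon E\to E$, i.e.\ is the composite of the vertical maps $[p]_E$ in~\eqref{p-multiplication tower of E/M splits into vertical and horizontal tower}. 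Iterating the local description~\eqref{F-tower for E/M-local} from the proof of Proposition~\ref{M_infty->E_infty->A'_infty} gives $q_\ell^{-1}(U)=U_\ell$, so $V_\ell=(v^\ell)^{-1}(U_\ell)$, and Lemma~\ref{horizontal map is covering map} identifies $V_\ell$ canonically with $D_\ell\times U_\ell$. Chasing~\eqref{F-tower for E/M-local} and~\eqref{v-tower for E/M-local} through one step of $[p]$ shows that, under these identifications, the transition $V_{\ell+1}\to V_\ell$ is the product of the transition $D_{\ell+1}\to D_\ell$ of the tower defining $D_\infty$ with the transition $U_{\ell+1}\to U_\ell$ of the tower defining $U_\infty$. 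Since $D_\infty\sim\varprojlim_\ell D_\ell$ (Lemma~\ref{l:pro-finite-perfectoid-spaces}) and $U_\infty\sim\varprojlim_\ell U_\ell$ are perfectoid tilde-limits, Lemma~\ref{affinoid tilde-limits commute with fibre products} yields a perfectoid tilde-limit $D_\infty\times U_\infty\sim\varprojlim_\ell V_\ell$. For $U,W\in\mathfrak U$, the two resulting perfectoid tilde-limits restrict over $U\cap W$ to perfectoid tilde-limits of the same system, hence are canonically isomorphic (Corollary~\ref{corollary: perfectoid tilde limit is unique}, Proposition~\ref{SW Proposition 2.4.3}); gluing produces a perfectoid space $A_\infty$ over $K$ with a map to $A$. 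The homeomorphism onto $\varprojlim|A|$ and the density condition of the definition of tilde-limit are checked on the cover $\{A_\infty|_U=D_\infty\times U_\infty\}_{U\in\mathfrak U}$, refined to affinoids, where they hold by construction. This proves (1), and (3) is the local description just obtained.

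For (5), next observe that for each $\ell$ there is an isomorphism $A=E/M\xrightarrow{\sim}(D_\ell\times E)/M_\ell=D_\ell\times^{M_\ell}E$, $e\mapsto[(0,e)]$, where $M_\ell$ acts on $D_\ell\times E$ antidiagonally (through $M_\ell\twoheadrightarrow D_\ell$ and by translation on $E$): it is bijective on points because $M\subseteq M_\ell$, and it is a morphism of $D_\ell$-torsors over $A_\ell=E/M_\ell$ intertwining the translation $D_\ell$-actions, hence an isomorphism of rigid groups. These are compatible with the transition maps --- $[p]$ on $E/M$, and the product of the surjection $D_{\ell+1}\to D_\ell$ with $[p]\colon E\to E$ on the target --- because $[p]\colon M_{\ell+1}\xrightarrow{\sim}M_\ell$ intertwines the relevant actions. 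Passing to the tilde-limit --- the associated-bundle construction commutes with it here: locally on $A$ (resp.\ $A_\ell$) both sides are of the shape $D_\infty\times(\text{open})$ (resp.\ $D_\ell\times(\text{open})$) by (3), and one reduces to Lemma~\ref{affinoid tilde-limits commute with fibre products}; see Appendix~\ref{s:appendix} --- one gets that $D_\infty\times^{M_\infty}E_\infty=(D_\infty\times E_\infty)/M_\infty$, a perfectoid space as the quotient of a perfectoid space by the free, totally discontinuous action of $M_\infty$, is a tilde-limit of $\varprojlim_{[p]}A$; by Corollary~\ref{corollary: perfectoid tilde limit is unique} it is $A_\infty$. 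The defining $M_\infty$-torsor $D_\infty\times E_\infty\to A_\infty$, with $M_\infty$ embedded antidiagonally, is the analytic-locally split short exact sequence of (5). For (4), the projection $D_\infty\times E_\infty\to E_\infty$ is $M_\infty$-equivariant and so descends to $A_\infty=(D_\infty\times E_\infty)/M_\infty\to E_\infty/M_\infty=A'_\infty$; over a trivialising $V\subseteq A'_\infty$, where $E_\infty|_V\cong M_\infty\times V$, one has $A_\infty|_V\cong(D_\infty\times M_\infty\times V)/M_\infty\cong D_\infty\times V$, so $A_\infty\to A'_\infty$ is an analytic-locally split $D_\infty$-torsor with kernel $(D_\infty\times M_\infty)/M_\infty\cong D_\infty$. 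Together with $M_\infty\hookrightarrow D_\infty$ and $E_\infty\hookrightarrow A_\infty$, $e\mapsto[(0,e)]$, this is the asserted map of short exact sequences.

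Finally, (2): by (1), $A_\infty$ is \emph{the} perfectoid tilde-limit of $\varprojlim_{[p]}A$, a datum not involving the lattices $M_n$, so Corollary~\ref{corollary: perfectoid tilde limit is unique} gives the independence up to canonical isomorphism; the subgroup $D_\infty=\ker(A_\infty\to A'_\infty)$ of (4) is closed and pro-finite-\'etale, and it does depend on the choice of the $M_n$ (equivalently of the $D_n\subseteq A[p^n]$, Remark~\ref{remark: Definition of the D_n}), since the quotient $A'_\infty\sim\varprojlim_n A/D_n$ does. The main obstacle throughout is the bookkeeping that interleaves the two towers of~\eqref{p-multiplication tower of E/M splits into vertical and horizontal tower}: checking that the finite-level identifications $V_\ell\cong D_\ell\times U_\ell$ and $A\cong D_\ell\times^{M_\ell}E$ are compatible with the transition maps \emph{as products}, and that the local perfectoid tilde-limits glue to an honest tilde-limit of the \emph{global} $[p]$-tower (verifying the topological and density conditions against the affinoid cover). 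The claim that the associated-bundle construction commutes with this tilde-limit, used in (5), is of the same character, and I would treat it via the fibre-bundle formalism of Appendix~\ref{s:appendix}.
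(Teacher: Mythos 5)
Your proposal is correct and follows essentially the same route as the paper: both arguments hinge on the local identification of the restricted $[p]$-tower over $U\in\mathfrak U$ with the product system $(D_\ell\times U_\ell)_\ell$ via Lemma~\ref{horizontal map is covering map}, then invoke Lemma~\ref{affinoid tilde-limits commute with fibre products} and glue. The only (cosmetic) difference is the order of (4) and (5) --- you establish the finite-level isomorphism $A\cong D_\ell\times^{M_\ell}E$ and pass to the limit to get (5), deducing (4) afterwards, whereas the paper reads off (4) directly from the local description and treats (5) as an immediate consequence.
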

	\begin{remark}
	We think of part (5) as the analogue of the Raynaud uniformisation
		\[0\to M\to E\to A\to 0.\]
	Here we note that while the map $E\to A$ is a quotient, in the limit over $[p]$ it becomes an immersion $E_\infty\hookrightarrow A_\infty$: The reason is that the projective system $(M,[p])$ has vanishing $\lim$ but non-vanishing $\mathrm{Rlim}^1$, for instance, when considered as abelian sheaves on perfectoid spaces for the pro-\'etale topology in the sense of \cite{etale_cohomology_of_diamonds} (assuming that $K$ is of characteristic $0$). A toy example of this phenomenon would be the inverse system over $[p]$ on the short exact sequence of groups 
	$0 \to\mathbb Z \to\mathbb R \to \mathbb R/\mathbb Z \to 0$
	whose limit yields an exact sequence
	\begin{center}
		\begin{tikzcd}
			0 \arrow[r] & 0 \arrow[r] & \mathbb R \arrow[r] & \varprojlim_{[p]}\mathbb R/\mathbb Z \arrow[r] & \varprojlim^1_{[p]}\mathbb Z = \mathbb Z_p/\mathbb Z\arrow[r] & 0.
		\end{tikzcd}
	\end{center}
	We therefore think the quotient $D_\infty/M_\infty=M_\infty\otimes_{\Z}(\Z_p/\Z$ implicit in part (5) as being an incarnation of $\mathrm{Rlim}^1_{[p]}M_\infty$.
\end{remark}
	\begin{proof}[Proof of Theorem~\ref{tilde-limit of tilde-limits of partial towers is tilde-limit of whole tower}]
We keep the notation from the proof of Proposition~\ref{M_infty->E_infty->A'_infty}: We have a cover of $A_n$ by open subsets $U_n$ and a  perfectoid open subspace $U_\infty\subseteq E_\infty$ for which $U_\infty\sim \varprojlim U_n$.
	 
By Lemma~\ref{horizontal map is covering map}, the restriction of diagram~(\ref{p-multiplication tower of E/M splits into vertical and horizontal tower}) to the open $U$ of the bottom $A$ becomes
\begin{equation*}
		\begin{tikzcd}[column sep={1.1cm,between origins},row sep={0.6cm,between origins}]
			\ddots  &  &  & \vdots &  & \vdots \\
			& D_2\times U_2 \arrow[rr,"v"] \arrow[rrdd, "{[p]}"'] &  & v^{-1}(U_2) \arrow[rr,"v"] \arrow[dd,"{[p]_E}"] &  & U_2 \arrow[dd,"{[p]_E}"] \\
			&  &  &  &  &  \\
			&  &  & D_1\times U_1 \arrow[rrdd, "{[p]}"'] \arrow[rr,"v"] &  & U_1 \arrow[dd,"{[p]_E}"] \\
			&  &  &  &  &  \\
			&  &  &  &  & U.
		\end{tikzcd}
\end{equation*}
Hence the restriction of the tower $\cdots\xrightarrow{[p]}A \xrightarrow{[p]}A\xrightarrow{[p]}A$ to $U$ becomes the inverse system 
	 \[\cdots\rightarrow D_{n+1}\times U_{n+1}\rightarrow D_{n}\times U_n\rightarrow \cdots.\]
	
	By Lemma~\ref{affinoid tilde-limits commute with fibre products} this inverse system has perfectoid tilde-limit $D_\infty \times U_\infty$. These local tilde-limits glue together to give the desired tilde-limit $A_\infty$. This proves parts (1), (2) and (3), and shows that the second row of part (4) is locally split and in particular exact.
	
	The first row in part (4) is from Proposition~\ref{M_infty->E_infty->A'_infty}. Part (5) follows immediately from part (4).
	\end{proof}
	\begin{remark}
		When working over a general perfectoid base field, the lattices $M_n$ may no longer be defined over $K$. Instead, one can show that the natural map $A[p^n]\times U_n\to V_n$ is an \'etale $E[p^n]$-torsor for the diagonal action where $V_n$ is the pullback of $U$ along $[p^n]\colon A\to A$. The point is that this torsor is split when $K$ is algebraically closed.
	\end{remark}

	\section{Applications}
	In this section, we give three applications of our main result. For all of these, we assume that $K$ is of characteristic $0$, i.e.\ $K$ is an algebraically closed non-archimedean field extension of $\Q_p$.
	\subsection{Uniformisation}
	Our first application is a ``$p$-adic uniformisation'' of abelian varieties.
	Recall that any abelian variety $A$ over $\mathbb C$ of dimension $d$ has a uniformisation in terms of a complex torus $A\cong \mathbb C^d/\Lambda$ for some $2d$-dimensional lattice $\Lambda\subseteq \mathbb C^d$. More canonically, it admits the presentation
\[
 A\cong \operatorname{Lie} A/H_1(A,\Z).
\]
	
	We have the following analogue of this over $K$: Let $A$ be an abeloid variety over $K$ of dimension $d$, considered as a rigid space. Then in the limit over $n$, the short exact sequences
	\[ 0\to A[p^n]\to A\to A\to 0\]
	give rise to a short exact sequence of sheaves on perfectoid $K$-algebras with the pro-\'etale topology
	\[0\to T_pA \to A_\infty \to A\to 0.\]
	Using the language of diamonds from \cite{etale_cohomology_of_diamonds}, we then have:
	\begin{corollary}
		The diamond $A^{\diamond}$ associated to $A$ has a natural presentation
		\[A^{\diamond} = A_\infty/T_pA \]
		given by the perfectoid space $A_\infty$ with its pro-\'etale subgroup $T_pA$.
	\end{corollary}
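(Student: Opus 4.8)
The plan is to deduce the corollary from the short exact sequence of pro-\'etale sheaves
\[0\to T_pA\to A_\infty\to A\to 0\]
displayed just before it: once this is established, exactness is by definition the statement that $A^\diamond$ is the quotient $A_\infty/T_pA$ for the pro-\'etale topology. The substance of the proof is therefore to prove this sequence; if one takes it as given from the preceding paragraph, the corollary is a one-line reformulation.

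First I would pass from the tilde-limit to an honest limit of sheaves. Recall that for an analytic adic space $X$ over $K$ the associated diamond $X^\diamond$ of \cite{etale_cohomology_of_diamonds} is, as a $v$-sheaf (equivalently pro-\'etale sheaf) on perfectoid $K$-algebras, the functor sending $(B,B^+)$ to $X(B,B^+)$; two such sheaves coincide as soon as they agree on affinoid perfectoids, and limits of sheaves are computed valuewise. Since $A_\infty$ is perfectoid, the sheaf $(B,B^+)\mapsto A_\infty(B,B^+)$ is $A_\infty^\diamond$ itself, and Proposition~\ref{SW Proposition 2.4.5} gives $A_\infty(B,B^+)=\varprojlim_n A(B,B^+)$ for every affinoid perfectoid $(B,B^+)$ over $K$, the limit running along $[p]$. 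Hence $A_\infty^\diamond=\varprojlim_{[p]}A^\diamond$ as pro-\'etale sheaves, with the projection $\pi_\infty\colon A_\infty^\diamond\to A^\diamond$ onto the bottom of the tower equal to the inverse limit of the isogenies $[p^n]\colon A\to A$.

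Second I would identify $\pi_\infty$ with a torsor under $T_pA=\varprojlim_n A[p^n]$. Each isogeny $[p^n]\colon A\to A$ is a finite \'etale surjection, hence a torsor under the finite group $A[p^n]$, which is constant since $K$ is algebraically closed of characteristic $0$; these torsors are compatible along the transition maps $[p]$ of the tower and along the multiplication maps $A[p^{n+1}]\xrightarrow{[p]}A[p^n]$. Passing to the limit, $\pi_\infty$ is a cofiltered limit of finite \'etale torsors, hence a pro-\'etale cover and a torsor under the profinite group $T_pA$; its kernel, as a map of abelian sheaves, is the fibre over the zero section, namely the pro-\'etale subgroup $T_pA\subset A_\infty$ obtained as the limit of the closed immersions $A[p^n]\hookrightarrow A$. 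This is precisely the asserted short exact sequence, and, $\pi_\infty$ being a $T_pA$-torsor, it identifies $A^\diamond$ with $A_\infty/T_pA$, proving the corollary.

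The main obstacle is not any deep input but making the first two steps precise: one needs that applying $(-)^\diamond$ turns the tilde-limit $A_\infty\sim\varprojlim_{[p]}A$ into a genuine limit of $v$-sheaves — which is exactly Proposition~\ref{SW Proposition 2.4.5} together with the fact that such sheaves are determined by their values on affinoid perfectoids — and that a cofiltered limit of compatible finite \'etale $G_n$-torsors is a pro-\'etale torsor under $\varprojlim_n G_n$. Both are standard; granting them, everything else is formal.
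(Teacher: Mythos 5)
Your proposal is correct and takes the same approach as the paper: the paper deduces the corollary from the short exact sequence $0\to T_pA\to A_\infty\to A\to 0$ of pro-\'etale sheaves stated immediately before it, asserting without further proof that this arises from $0\to A[p^n]\to A\xrightarrow{[p^n]}A\to 0$ in the limit over $n$. Your write-up simply unpacks that assertion, via Proposition~\ref{SW Proposition 2.4.5} to turn the tilde-limit into a valuewise limit of sheaves and the observation that a compatible tower of finite \'etale $A[p^n]$-torsors has limit a pro-\'etale $T_pA$-torsor, which is exactly the reasoning the paper leaves implicit.
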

	Here we think of $T_pA=H^{\et}_1(A,\Z_p)$ as the analogue of $H_1(A,\Z)$ in the complex setting.

	Of course this $p$-adic uniformisation of $A$ is very closely related to the uniformisation of the associated $p$-divisible group $A[p^\infty]$ described in \cite{SW} and \cite[\S4]{survey}: Indeed, in the language used there, we have a morphism of short exact sequences
	
	\begin{center}
		\begin{tikzcd}
			0 \arrow[r] & T_p(A[p^\infty]) \arrow[r] \arrow[d, equal] & \widetilde{A[p^\infty]} \arrow[d, hook] \arrow[r] & A[p^\infty] \arrow[d] \arrow[r] & 0 \\
			0 \arrow[r] & T_pA \arrow[r] & A_\infty \arrow[r] & A\arrow[r] & 0.
		\end{tikzcd}
	\end{center}
	\begin{remark}
	We note that for two abelian varieties $A$ and $B$ of dimension $d$, the universal covers $A_\infty$ and $B_\infty$ are different in general, so that this is only a ``uniformisation'' in a rather weak sense. However, they are canonically isomorphic if $A$ and $B$ are abelian varieties of good reduction with the same special fibre, or if $A$ and $B$ are $p$-power isogeneous, so that in these cases we can really think of $T_pA$ as a a $2d$-dimension $\mathbb Z_p$-lattice determining $A$.
	\end{remark}
	 There is a second closely related uniformisation which we discuss in \S5.3 below.
	\subsection{Stein property}
	As a second application, we can combine our main theorem with work of Reinecke to deduce the following Artin vanishing result:
	\begin{corollary}[Reinecke]
		Let $A$ be an abeloid variety over $K$. Let $L$ be a constructible sheaf of $\mathbb F_p$-modules on $A_{\et}$. Then for any $i>\dim A$,
		\[\textstyle\varinjlim_{n\in \mathbb N}H_{\et}^i(A,[p^n]^{\ast}L)=0.\]
	\end{corollary}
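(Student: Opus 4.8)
The plan is to reduce the statement to a cohomology vanishing assertion on the perfectoid space $A_\infty$ and then invoke work of Reinecke. First I would observe that the transition maps in the direct system are the pullback maps $H^i_{\et}(A,[p^n]^\ast L)\to H^i_{\et}(A,[p^{n+1}]^\ast L)$ arising from $[p]\colon A\to A$, so the colimit computes $H^i_{\et}$ of the pro-(finite étale) tower $\varprojlim_{[p]}A$. By Theorem~\ref{thm:main_thm_intro} this tower is represented by the perfectoid space $A_\infty$, and since the transition maps $[p]\colon A\to A$ are finite étale, a standard comparison (continuity of étale cohomology along tilde-limits with qcqs transition maps, as in \cite{huber2013etale} or \cite{perfectoid}, \S7) gives
\[
 \textstyle\varinjlim_{n}H^i_{\et}(A,[p^n]^\ast L)=H^i_{\et}(A_\infty,L_\infty),
\]
where $L_\infty$ is the pullback of $L$ to $A_\infty$, which is again a constructible (indeed, after passing to a stratification, locally constant) sheaf of $\F_p$-modules. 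So it suffices to prove that $H^i_{\et}(A_\infty,L_\infty)=0$ for $i>\dim A=\dim A_\infty$.

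Next I would invoke Reinecke's Artin-type vanishing theorem for perfectoid spaces: for a ``large'' perfectoid space that behaves like a Stein space — concretely, one that admits a presentation as described in Theorem~\ref{tilde-limit of tilde-limits of partial towers is tilde-limit of whole tower}, or more directly using the local structure $A_\infty|_U\cong D_\infty\times U_\infty$ together with the fact that $B_\infty$, being the adic generic fibre of a limit of formal affine(-over-base) group schemes (Lemma~\ref{tilde-limit exists and is perfectoid in the good reduction case}), is covered by affinoid perfectoids — one has $H^i_{\et}(A_\infty,\mathcal F)=0$ for $i>\dim A_\infty$ and any constructible $\F_p$-sheaf $\mathcal F$. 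The point is that $A_\infty$ has no ``compact directions'' left of positive cohomological dimension: the torus direction $T_\infty$ has become a perfectoid ``punctured polydisc'' of the type appearing in the tilting of $\mathbb{G}_m$, and $B_\infty$ is the perfectoid cover of an abelian variety of good reduction, which is affinoid-like in the relevant sense. I would cite Reinecke's result in the precise form needed, so that this step is a black box.

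The main obstacle is the second step: one must either extract from Reinecke's work exactly the vanishing statement for the particular perfectoid spaces $A_\infty$ at hand, or reprove it using the structural description in \S4. If done by hand, the key inputs would be (i) the affinoid perfectoid local pieces $D_\infty\times U_\infty$ with $U_\infty\subseteq E_\infty$, (ii) a Mayer--Vietoris / Čech spectral sequence argument controlling the cohomological dimension of $A_\infty$ via that of its affinoid perfectoid pieces (each of which has $\F_p$-cohomological dimension $\le \dim$ by the Artin--Grothendieck vanishing for affinoid adic spaces, or its perfectoid analogue), together with the fact that $A_\infty$ is covered by finitely many such pieces up to the profinite-discrete factor $D_\infty$, which contributes no higher cohomology. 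The first step is essentially routine given the machinery of tilde-limits already recalled in \S2, so I would keep it brief and concentrate the exposition on quoting Reinecke correctly.
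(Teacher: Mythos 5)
Your proof takes essentially the same route as the paper's: Theorem~\ref{thm:main_thm_intro} supplies the perfectoid tilde-limit $A_\infty$, and the vanishing is then outsourced to Reinecke's theorem (which the paper cites as \cite[Theorem 3.3]{Reinecke} and which, as stated there, already takes the tower $\cdots\to A\xrightarrow{[p]}A$ as input and directly returns the colimit vanishing, so your separate continuity step is in fact bundled into that citation). Your sketch of why the vanishing on $A_\infty$ should hold — Stein-like behaviour, \v{C}ech reduction to affinoid perfectoid pieces via the local structure $D_\infty\times U_\infty$ — matches what the paper explains in the remark immediately following the corollary.
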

	\begin{proof}
	Due to Theorem~\ref{thm:main_thm_intro}, we can apply \cite[Theorem 3.3]{Reinecke} to the system $\dots \rightarrow A\xrightarrow{[p]}A$.
	\end{proof}
	A theorem of Artin and Grothendieck states if $X$ is an affine algebraic variety over $K$, then $H_{\et}^i(X,L)=0$ for any constructible $\mathbb F_p$-module $L$ and any $i>\dim A$. However, the rigid analogue of this statement is false in general. The point of the Corollary is that an analogue of this vanishing statement is true for the pullback of $L$ to $A_\infty$ in the following sense: Consider the morphism of sites $\nu\colon A_{\proet}\to A_{\et}$. Then by regarding $A_\infty$ as an object in $A_{\proet}$ via the pro-\'etale morphism $A_\infty\to A$, one can show
	\[H^i_{\proet}(A_\infty,\nu^{\ast}L)=\varinjlim_{n\in\N} H^i_{\et}(A,[p^n]^{\ast}L). \]
	By results from \cite{p-adic_Hodge}, the space $A_\infty$ has a ``Stein space''-like property in the sense that we have $H^j(V,\nu^{\ast}L)=0$ for any affinoid perfectoid $V\subseteq A_\infty$ and any $j>0$. One can use this to reduce to a computation in \v{C}ech cohomology, which shows that the left hand side vanishes for $i>\dim A$.
	\subsection{Universal perfectoid covers of curves}
	
As a third application, we describe how one can obtain universal perfectoid pro-\'etale covers of curves over $K$. This was first observed by Hansen \cite{Hansen-blog}.
	
	We start by recalling some background in a more general setting: Let $C$ be a connected smooth projective scheme over $K$, which we consider as an analytic adic space. By \cite[Theorem 3.1]{LutRiemann}, GAGA induces an equivalence of categories between finite \'etale covers of the scheme $C$ and finite \'etale covers of the adic space $C$. We can therefore fix a geometric base point $x:\Spa({K},\O_{{K}})\to C$ and study the usual \'etale fundamental group $\pi_1(C,x)$ using the language of adic spaces. 
	
	To prepare our discussion, we recall from \cite[\S3]{p-adic_Hodge} a few facts on the pro-finite-\'etale site of $C$: This is the category $C_{\profet}=\mathrm{pro-}C_{\fet}$ of small cofiltered inverse systems $(X_i)_{i\in I}$ in the finite \'etale site $C_{\fet}$. An object in $C_{\profet}$ is called perfectoid if there is a perfectoid tilde-limit $X_\infty\sim \varprojlim X_i$. Let $C_{\profet}^{\perf}$ be the full subcategory of perfectoid objects, and let $\mathrm{Perf}_C$ be the category of perfectoid spaces over $C$. Then the argument in the proof of \cite[Lemma 8.2.3]{berkeley} shows:
\begin{lemma}\label{l:profet-perf-tilde-limit-fully-faithful}
	Sending perfectoid pro-\'etale objects to their tilde-limits defines a fully faithful functor
	\[C_{\profet}^{\perf}\to  \mathrm{Perf}_{C},\quad (X_i)_{i\in I}\mapsto X_\infty\sim \varprojlim_{i\in I} X_i.\]
\end{lemma}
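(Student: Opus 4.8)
The plan is to reduce the statement to a known full-faithfulness result. Recall that, given objects $\mathbf{X}=(X_i)_{i\in I}$ and $\mathbf{Y}=(Y_j)_{j\in J}$ of $C_{\profet}^{\perf}$ with perfectoid tilde-limits $X_\infty$ and $Y_\infty$, the Hom-set in the pro-category is by definition
\[
 \operatorname{Hom}_{C_{\profet}}(\mathbf{X},\mathbf{Y})=\varprojlim_{j\in J}\varinjlim_{i\in I}\operatorname{Hom}_{C_{\fet}}(X_i,Y_j),
\]
so the content is to show that applying the tilde-limit functor induces a bijection of this set with $\operatorname{Hom}_{\mathrm{Perf}_C}(X_\infty,Y_\infty)$. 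First I would note that the $Y_j$ are all finite \'etale over $C$, hence the structure maps $Y_\infty\to Y_j$ exhibit $Y_\infty$ as a tilde-limit along finite \'etale transition maps; I would want to use Proposition~\ref{SW Proposition 2.4.5} to compute maps into $Y_\infty$. The key point is that any morphism of perfectoid spaces $X_\infty\to Y_\infty$ over $C$ is, composed with $Y_\infty\to Y_j$, a map of perfectoid spaces $X_\infty\to Y_j$ over $C$; since $Y_j$ is finite \'etale over $C$ and $X_\infty$ is perfectoid, such a map factors through some finite stage $X_i$ by the standard spreading-out/finite-presentation argument (a finite \'etale $C$-scheme is of finite presentation, and $X_\infty\sim\varprojlim X_i$ with affinoid transition maps, so any map from $X_\infty$ to it descends to some $X_i$ after possibly enlarging $i$, uniquely up to further enlargement). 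This is exactly the argument in the proof of \cite[Lemma~8.2.3]{berkeley}, which is what the statement invokes, so the proof should essentially consist of citing that argument and checking it applies verbatim.

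Concretely, the steps I would carry out are: \textbf{(1)} fix $\mathbf{X}$, $\mathbf{Y}$ and reduce to showing, for each fixed $j$, that $\varinjlim_i \operatorname{Hom}_{C_{\fet}}(X_i,Y_j)\xrightarrow{\ \sim\ }\operatorname{Hom}_{\mathrm{Perf}_C}(X_\infty,Y_j)$, and then assemble over $j\in J$ by passing to the limit, using that $\operatorname{Hom}_{\mathrm{Perf}_C}(X_\infty,Y_\infty)=\varprojlim_j\operatorname{Hom}_{\mathrm{Perf}_C}(X_\infty,Y_j)$ since $Y_\infty\sim\varprojlim_j Y_j$ and $X_\infty$ is perfectoid (apply Proposition~\ref{SW Proposition 2.4.5} with $(B,B^+)$ ranging over affinoid perfectoid $C$-algebras, noting a map of perfectoid spaces is determined by its functor of points on affinoid perfectoids); \textbf{(2)} for fixed $j$, prove surjectivity: given $f\colon X_\infty\to Y_j$ over $C$, cover $X_\infty$ by affinoid perfectoids $\operatorname{Spa}(R,R^+)$ each of which, by the tilde-limit property, is $\sim\varprojlim_i \operatorname{Spa}(R_i,R_i^+)$ with $\varinjlim R_i\to R$ dense; since $Y_j\to C$ is finite, $f$ corresponds over each affinoid to a finite $R$-algebra generated by finitely many elements satisfying monic polynomials with coefficients pulled back from $C$, hence defined already over some $R_i$ after clearing denominators and using density, giving descent of $f$ to some $X_i$; \textbf{(3)} prove injectivity: two maps $X_i\rightrightarrows Y_j$ over $C$ that agree after composing with $X_\infty\to X_i$ agree after composing with $X_{i'}\to X_i$ for some $i'\geq i$, because the locus where two finite \'etale $C$-maps agree is open and closed, and $|X_\infty|=\varprojlim|X_i|$; \textbf{(4)} check compatibility of the descended maps as $j$ varies to get a genuine element of the pro-category Hom. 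One should also remark that the functor is well-defined on objects by Corollary~\ref{corollary: perfectoid tilde limit is unique}, which makes $X_\infty$ canonical.

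The main obstacle, such as it is, is the descent/spreading-out step \textbf{(2)}: one must argue carefully that a morphism of \emph{perfectoid} spaces $X_\infty\to Y_j$ with $Y_j$ finite \'etale over $C$ descends to a finite stage, using the tilde-limit presentation rather than an honest limit of adic spaces. The subtlety is that $X_\infty\sim\varprojlim X_i$ is only a tilde-limit, so one does not literally have $\mathcal{O}(X_\infty)=\varinjlim\mathcal{O}(X_i)$, only density; one then has to promote a finite \'etale algebra defined over the dense subring to an actual descent, which works because finite \'etale algebras are rigid (no infinitesimal automorphisms) and their defining data lives in finitely many elements. This is handled cleanly in the cited proof of \cite[Lemma~8.2.3]{berkeley}, and I expect no new difficulties arise in the present generality. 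I would therefore keep the proof short, reproducing the structure above and referring to \cite{berkeley} for the descent of finite \'etale algebras along dense ring maps, while spelling out the reduction in step \textbf{(1)} since that uses Proposition~\ref{SW Proposition 2.4.5} in a way specific to our setup.
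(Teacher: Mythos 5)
Your proposal is correct and follows the same route the paper takes: the paper's ``proof'' is literally the clause ``the argument in the proof of \cite[Lemma 8.2.3]{berkeley} shows,'' and your steps (1)--(4) are exactly an unpacking of that argument --- compute $\operatorname{Hom}$ in the pro-category, reduce via Proposition~\ref{SW Proposition 2.4.5} to maps into a single finite \'etale $Y_j$, spread out along the dense image $\varinjlim R_i \to R$ using finite presentation of finite \'etale algebras, and assemble. The only thing I would flag is a matter of exposition rather than a gap: since the paper offers no details at all, your more explicit write-up is if anything a useful amplification, and your care in step (2) about the tilde-limit only giving density (not an honest colimit of rings) is precisely the point the Berkeley argument handles.
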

	We call the objects $X_\infty \to C$ in the essential image the pro-finite-\'etale perfectoid covers of $C$.
	\begin{proposition}[Proposition~3.5, \cite{p-adic_Hodge}]
		There is an equivalence of categories
		\[ F:C_{\mathrm{prof\acute{e}t}}\to \pi_{1}(C,x)\mathrm{-pfSets},\quad (Y_i)_{i\in I} \mapsto F(X):=\varprojlim_{i\in I}|Y_i\times_{C}x|=\varprojlim_{i\in I}\Hom_C(x,Y_i)\]
		from the pro-finite-\'etale site of $C$ to the category of profinite sets with continuous $\pi_{1}(C,x)$-action.
	\end{proposition}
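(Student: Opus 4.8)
The plan is to show that $F$ is an equivalence by exhibiting a quasi-inverse and checking the usual conditions (essential surjectivity, full faithfulness), using the classical theory of the étale fundamental group adapted to the pro-finite-étale setting. First I would recall that $\pi_1(C,x)$ is, by definition, the automorphism group of the fibre functor $\omega\colon C_{\fet}\to \mathrm{FinSets}$, $Y\mapsto \Hom_C(x,Y)$, and that Grothendieck's Galois theory gives an equivalence $C_{\fet}\simeq \pi_1(C,x)\text{-}\mathrm{FinSets}$ onto the category of \emph{finite} continuous $\pi_1(C,x)$-sets. The functor $F$ in the statement is nothing but the pro-extension of $\omega$: an object of $C_{\profet}=\mathrm{pro\text{-}}C_{\fet}$ is a cofiltered system $(Y_i)_{i\in I}$, and $F((Y_i))=\varprojlim_i \omega(Y_i)$, a profinite set with continuous $\pi_1(C,x)$-action. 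So the claim is essentially the formal statement that passing to pro-objects on both sides of an equivalence of categories $C_{\fet}\simeq \pi_1\text{-}\mathrm{FinSets}$ yields an equivalence $\mathrm{pro\text{-}}C_{\fet}\simeq \mathrm{pro\text{-}}(\pi_1\text{-}\mathrm{FinSets})$, together with the identification $\mathrm{pro\text{-}}(\pi_1\text{-}\mathrm{FinSets})\simeq \pi_1\text{-}\mathrm{pfSets}$ (continuous actions of the profinite group $\pi_1(C,x)$ on profinite sets).

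The key steps, in order: (1) Invoke GAGA via \cite[Theorem~3.1]{LutRiemann} to identify $C_{\fet}$ for the adic space with $C_{\fet}$ for the scheme, so that Grothendieck's Galois theory applies and gives $\omega\colon C_{\fet}\xrightarrow{\ \sim\ }\pi_1(C,x)\text{-}\mathrm{FinSets}$. (2) Observe that $\mathrm{pro\text{-}}(-)$ is a $2$-functor on categories, so an equivalence of categories induces an equivalence of their pro-categories; hence $F$ restricted to systems is an equivalence $C_{\profet}=\mathrm{pro\text{-}}C_{\fet}\xrightarrow{\ \sim\ }\mathrm{pro\text{-}}(\pi_1(C,x)\text{-}\mathrm{FinSets})$. (3) Identify $\mathrm{pro\text{-}}(\pi_1(C,x)\text{-}\mathrm{FinSets})$ with $\pi_1(C,x)\text{-}\mathrm{pfSets}$: a cofiltered system of finite continuous $\pi_1$-sets has an inverse limit which is a profinite set carrying a continuous $\pi_1$-action, and conversely every profinite set $S$ with continuous $\pi_1$-action is canonically the limit of its finite continuous quotients $S=\varprojlim_j S_j$; one checks the morphism sets match on the nose because a continuous map between profinite sets is determined by the induced maps on finite quotients. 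Composing (2) and (3) gives that $F$ is an equivalence, and unwinding the composite shows it is exactly the functor $(Y_i)\mapsto \varprojlim_i|Y_i\times_C x|=\varprojlim_i\Hom_C(x,Y_i)$ of the statement.

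Concretely, full faithfulness reduces to the computation $\Hom_{C_{\profet}}((Y_i),(Z_j))=\varprojlim_j\varinjlim_i\Hom_{C_{\fet}}(Y_i,Z_j)=\varprojlim_j\varinjlim_i\Hom_{\pi_1}(\omega Y_i,\omega Z_j)=\Hom_{\pi_1\text{-}\mathrm{pfSets}}(F(Y),F(Z))$, where the middle equality is Grothendieck's theorem applied level-wise and the outer equalities are the definitions of morphisms in a pro-category and the fact that a $\pi_1$-equivariant continuous map of profinite sets is the limit of its compatible system of maps on finite equivariant quotients. Essential surjectivity: given $S\in\pi_1(C,x)\text{-}\mathrm{pfSets}$, write $S=\varprojlim_j S_j$ with $S_j$ finite continuous $\pi_1$-sets, lift each $S_j$ to $Y_j\in C_{\fet}$ via the quasi-inverse of $\omega$, and observe $(Y_j)_j\in C_{\profet}$ has $F((Y_j))\cong S$ by construction.

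The main obstacle I anticipate is purely bookkeeping: making precise the identification $\mathrm{pro\text{-}}(\pi_1\text{-}\mathrm{FinSets})\simeq \pi_1\text{-}\mathrm{pfSets}$, i.e.\ checking that the inverse-limit functor from the pro-category of finite continuous $G$-sets ($G=\pi_1(C,x)$ a profinite group) to the category of profinite continuous $G$-sets is an equivalence. This requires that (a) the action on the limit is continuous (true because it is a limit of continuous finite actions), (b) every profinite continuous $G$-set is pro-(finite continuous $G$-set) — this uses that a profinite continuous $G$-set, being a cofiltered limit of finite sets on which $G$ acts through finite quotients, has cofinally many finite $G$-stable quotients — and (c) the $\Hom$ comparison above. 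None of this is deep, but it is the step where one must be careful about continuity of actions and about cofinality of the indexing systems; everything else is a direct application of Grothendieck's Galois theory and the GAGA equivalence already cited. Once this is in place, the proposition follows formally.
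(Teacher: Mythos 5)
The paper does not prove this proposition: it is quoted verbatim as Proposition~3.5 of \cite{p-adic_Hodge} (with the GAGA input from \cite[Theorem~3.1]{LutRiemann} recorded in the preceding paragraph), so there is no internal argument here to compare against. Your reconstruction is nevertheless correct and is in substance the argument backing the cited result: GAGA identifies adic and algebraic $C_{\fet}$, Grothendieck's Galois theory gives $C_{\fet}\simeq \pi_1(C,x)\text{-}\mathrm{FinSets}$, one passes to pro-categories, and one identifies $\mathrm{pro}\text{-}(\pi_1(C,x)\text{-}\mathrm{FinSets})$ with continuous profinite $\pi_1(C,x)$-sets. You correctly isolate the one non-formal point, namely the cofinality of finite $G$-equivariant quotients among all finite quotients of a continuous profinite $G$-set $S$. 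Be aware, though, that the parenthetical justification you give for it (``being a cofiltered limit of finite sets on which $G$ acts through finite quotients'') presupposes the conclusion. The honest argument: given a finite quotient $q\colon S\to S_0$, continuity of $G\times S\to S_0$ and compactness of $G\times S$ produce an open normal subgroup $N\le G$ and a finite clopen partition of $S$ such that $q(gs)$ depends only on the coset $gN$ and the part containing $s$; then $s\mapsto(q(gs))_{gN}\in S_0^{G/N}$ is a continuous $G$-equivariant map with finite image refining $S_0$. With that detail supplied, your proposal is complete and agrees with the argument in the cited source.
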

	This restricts to the usual equivalence of finite \'etale covers to finite sets with continuous $\pi_{1}(C,x)$-action. 
	In particular, for every open subgroup $H\subseteq \pi_1(C,x)$, there is a corresponding finite \'etale morphism $C_H\to C$ from a connected scheme $C_H$, considered as an analytic adic space. For any two open subgroups $H_1\subseteq H_2\subseteq \pi_1(C,x)$, there is a natural map $C_{H_1}\to C_{H_2}$. For varying $H$, one therefore has a filtered inverse system $(C_H)_{H\subseteq \pi_1(C,x)}$ which we may regard as an object in $C_{\text{pro\'et}}$.
	\begin{corollary}[Hansen,\cite{Hansen-blog}]\label{c:universal-covers-of-curves}
		Let $C$ be a connected smooth projective curve of genus $g\geq 1$ over $K$, considered as an analytic adic space.
		\begin{enumerate}
		\item There is a perfectoid tilde-limit $\tilde{C} \sim \varprojlim_{H}C_H$ where $H$ ranges over the open subgroups of $ \pi_1(C,x)$. 
		\item The morphism $\tilde{C}\to C$ is a pro-\'etale $\pi_1(C,x)$-torsor. It is universal with this property in the sense that it represents the functor sending	 pro-finite-\'etale perfectoid covers $X\to C$ to the $\pi_1(C,x)$-module $F(X)=\mathrm{Hom}_C(x,X)$.
		\item For any $X\in C_{\mathrm{prof\acute{e}t}}$, for example for any finite \'etale $X\to C$, there is a natural isomorphism
		\[ X = \underline{F(X)}\times^{\pi_1(C,x)}\tilde{C}:=(\underline{F(X)}\times \tilde{C})/\pi_1(C,x).\]
		Here the right hand side is the categorical quotient in adic spaces for the antidiagonal action.
	\end{enumerate}
	\end{corollary}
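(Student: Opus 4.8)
The approach, following Hansen, is to realise $C$ inside an abelian variety and transport the main theorem along the embedding. Let $J$ be the Jacobian of $C$, an abelian variety of dimension $g$, and let $j\colon C\hookrightarrow J$ be the Abel--Jacobi morphism with $j(x)=0$; since $g\geq 1$ this is a closed immersion. I would use two classical facts: $j$ induces a \emph{surjection} $\pi_1(C,x)\twoheadrightarrow\pi_1(J,0)$ which identifies its target with the abelianisation of $\pi_1(C,x)$, and $\pi_1(J,0)=TJ=\varprojlim_N J[N]$. By Theorem~\ref{thm:main_thm_intro} there is a perfectoid tilde-limit $\varprojlim_{[p]}J$; adjoining the prime-to-$p$ multiplications --- which are finite \'etale --- and using that a cofiltered limit of finite \'etale covers of a perfectoid space is again perfectoid (a completed filtered colimit of perfectoid algebras being perfectoid), one obtains a perfectoid space $\tilde J\sim\varprojlim_{[N]}J$. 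Since the $[N]\colon J\to J$ are cofinal among the connected finite \'etale covers of $J$, the map $\tilde J\to J$ is the universal pro-finite-\'etale perfectoid cover of $J$, a pro-\'etale $TJ$-torsor.

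Next I would pull $\tilde J$ back along $j$. Writing $C_N:=C\times_{J,[N]}J$, the second projection identifies $C_N$ with the Zariski-closed subspace $[N]^{-1}(j(C))\subseteq J$ compatibly with the transition maps, so that $C^{\mathrm{ab}}:=\varprojlim_N C_N\cong \tilde J\times_J C$ is the base change of the closed immersion $j$ along the bottom projection $\tilde J\to J$. \emph{The crucial point --- and, I expect, the main obstacle --- is that $C^{\mathrm{ab}}$ is again perfectoid}, even though it is a priori only a Zariski-closed subspace of the perfectoid space $\tilde J$. I would check this on the affinoid charts produced in the proof of Theorem~\ref{tilde-limit of tilde-limits of partial towers is tilde-limit of whole tower}, over which $\tilde J$ is built from $T_\infty$, $B_\infty$ and $D_\infty$ and hence carries an integral model whose reduction modulo $\varpi$ is relatively perfect over $\O_K/\varpi$: since $p$-power roots of an \'etale-local system of coordinates on $J$ become available on $\tilde J$, a freshman's-dream computation shows the Frobenius on $\varinjlim_N\O^+(C_N)/\varpi$ is surjective, and --- using that each $C_N$ is reduced, so that the completed colimit of the integral coordinate rings stays uniform --- \cite[Theorem~5.2]{perfectoid} then gives that $C^{\mathrm{ab}}$ is perfectoid. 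By the first classical fact, $C^{\mathrm{ab}}\to C$ is the pro-finite-\'etale cover attached to the kernel $N_C$ of $\pi_1(C,x)\twoheadrightarrow TJ$, so that $N_C=\pi_1(C^{\mathrm{ab}})$.

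It then remains to pass from $C^{\mathrm{ab}}$ to $\tilde C$ and to prove (1)--(3). The base change $\tilde C\times_C C^{\mathrm{ab}}=\varprojlim_H(C_H\times_C C^{\mathrm{ab}})$ is a cofiltered limit of finite \'etale covers of the perfectoid space $C^{\mathrm{ab}}$, hence perfectoid; since it is isomorphic over $C$ to $\tilde C\times_K\underline{TJ}$, with $\underline{TJ}$ the profinite perfectoid space of Lemma~\ref{l:pro-finite-perfectoid-spaces}, base changing along a point of $\underline{TJ}$ realises $\tilde C$ as a fibre product of perfectoid spaces over $\underline{TJ}$, which is perfectoid by \cite[Prop.~6.18]{perfectoid}; this proves (1). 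That $\tilde C\to C$ is a pro-\'etale $\pi_1(C,x)$-torsor is immediate from the construction: with $H$ ranging over open \emph{normal} subgroups, $\varprojlim_H\pi_1(C,x)/H=\pi_1(C,x)$ acts freely and transitively on geometric fibres. For the universality in (2) and the formula in (3), I would combine the full faithfulness of $C_{\profet}^{\perf}\to\mathrm{Perf}_C$ (Lemma~\ref{l:profet-perf-tilde-limit-fully-faithful}) with the equivalence $F\colon C_{\profet}\xrightarrow{\ \sim\ }\pi_1(C,x)\mathrm{-pfSets}$, under which $\tilde C$ corresponds to $\pi_1(C,x)$ with left translation: then $\Hom_C(\tilde C,X)=\Hom_{\pi_1(C,x)}(\pi_1(C,x),F(X))=F(X)$ gives (2), and for any $X\in C_{\profet}$ both $X$ and $\underline{F(X)}\times^{\pi_1(C,x)}\tilde C$ have fibre functor $F(X)$ and are therefore isomorphic; the quotient $(\underline{F(X)}\times\tilde C)/\pi_1(C,x)$ exists in adic spaces as the inverse limit over open normal $H$ of the finite-group quotients $(\underline{F(X)}\times C_H)/(\pi_1(C,x)/H)$.
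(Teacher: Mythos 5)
Your overall strategy agrees with the paper's: embed $C$ in its Jacobian $J$, pull back a perfectoid tilde-limit along the closed immersion, then construct $\tilde{C}$ as a pro-finite-\'etale cover of the resulting perfectoid space and finish via the equivalence $F\colon C_{\profet}\to \pi_1(C,x)$-pfSets. The differences in the second half are cosmetic: the paper first builds $C_\infty\sim\varprojlim_n [p^n]^{-1}(C)$ from the $[p]$-tower on $J$, then shows $C_{H,\infty}\to C_\infty$ is finite \'etale and takes the limit over $H$, whereas you adjoin all prime-to-$p$ multiplications from the outset to form $C^{\mathrm{ab}}$ and then deduce perfectoidness of $\tilde{C}$ by a detour through $\tilde{C}\times_C C^{\mathrm{ab}}\cong \tilde{C}\times_K\underline{TJ}$. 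Both are acceptable, and your treatment of (2) and (3) via the fibre-functor equivalence is essentially the same as, if more compressed than, the paper's Galois-descent argument.

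The genuine gap is at the step you correctly flag as the main obstacle: showing that the Zariski-closed subspace $C^{\mathrm{ab}}\subseteq\tilde J$ is perfectoid. The paper handles this by invoking \cite[Lemma II.2.2]{torsion}, which says that for a Zariski-closed subset $Z$ of an affinoid perfectoid space there is a universal affinoid perfectoid space mapping to it with underlying space $Z$; this is exactly what lets one pull back a perfectoid tilde-limit along a closed immersion. You instead attempt a direct argument: Frobenius surjectivity on $\varinjlim_N\O^+(C_N)/\varpi$ via a freshman's-dream computation, plus the assertion that reducedness of each $C_N$ forces the completed colimit of the integral rings to stay uniform. Neither half is established, and the second is precisely the hard part: passing from a perfectoid $R$ to $R/I$ does not stay perfectoid in any naive sense, and reducedness of the finite levels does not give boundedness of the power-bounded elements in the completed colimit. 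The entire content of Scholze's lemma is to produce the correct completion that fixes this, and your sketch neither proves that nor cites it. As written, the proof of (1) therefore has a real hole, and you should either invoke \cite[Lemma II.2.2]{torsion} as the paper does, or supply a complete proof of that statement.
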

\begin{remark}
Parts (2) and (3) say that we may reasonably regard $\tilde{C}\to C$ as the ``universal cover'' of $C$, in analogy with this notion in topology.
\end{remark}
The proof also works in the case that $C$ is an abelian variety. In this case, the \'etale fundamental group is simply the absolute Tate module  $\pi_1(A,x)=TA:=\varprojlim_{N\in \N} A[N](K)$. We then have:
\begin{corollary}\label{c:universal-covers-of-abelian-varieties}
	Let $A$ be an abelian variety over $K$, then there is a perfectoid tilde-limit \[\tilde{A}\sim \varprojlim_{[N]}A\]
	and the analogous statements of Corollary~\ref{c:universal-covers-of-curves}.2 and 3 hold for the $TA$-torsor $\tilde{A}\to A$. In particular, there is a natural isomorphism
	\[A^{\diamond}=\tilde{A}/TA =\tilde{A}/\pi_1(A,x).\]
\end{corollary}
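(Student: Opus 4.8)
The plan is to obtain $\tilde A$ from the perfectoid space $A_\infty\sim\varprojlim_{[p]}A$ of Theorem~\ref{thm:main_thm_intro} by adjoining the prime-to-$p$ part, which is a pro-finite-\'etale cover. By Lemma~\ref{perfectoid tilde-limit is perfectoid group in a functorial way}, $A_\infty$ is a perfectoid group, so for every $m$ coprime to $p$ the multiplication map $[m]\colon A_\infty\to A_\infty$ is a homomorphism of perfectoid groups. Using Proposition~\ref{SW Proposition 2.4.5} and the fact that $[p]$ acts invertibly on prime-to-$p$ torsion, one computes $\ker([m]\colon A_\infty\to A_\infty)=\varprojlim_{[p]}A[m]\cong A[m]$, which---since $K$ is algebraically closed of characteristic $0$---is the constant finite group $\underline{(\Z/m\Z)^{2\dim A}}$; the same input shows $[m]$ is surjective. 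Thus $[m]\colon A_\infty\to A_\infty$ is an isogeny of perfectoid groups with finite constant kernel, hence finite \'etale.

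Next I would show that the cofiltered system $(A_\infty,[m])_{\gcd(m,p)=1}$ of finite \'etale covers admits a perfectoid tilde-limit. Cover $A_\infty$ by affinoid perfectoids $\Spa(R,R^+)$; the preimage of such an open under $[m]$ is a finite \'etale cover of $\Spa(R,R^+)$, hence again affinoid perfectoid by \cite[Theorem~7.9]{perfectoid}. Passing to the (uncompleted) colimit of the corresponding rings of integral elements modulo $\varpi$, which is perfect up to almost isomorphism, and invoking \cite[Theorem~5.2]{perfectoid} together with Proposition~\ref{SW Proposition 2.4.2}, these local pieces assemble to a perfectoid tilde-limit over each affinoid of $A_\infty$; the pieces glue to a perfectoid tilde-limit $\tilde A\sim\varprojlim_{[m],\,\gcd(m,p)=1}A_\infty$. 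Since $A_\infty\sim\varprojlim_{[p]}A$ and the index poset $\{p^k m : k\geq 0,\ \gcd(m,p)=1\}$ is all of $(\N,\mid)$, transitivity of tilde-limits---checked directly from the definition of tilde-limit---gives the desired $\tilde A\sim\varprojlim_{[N]}A$, which is a perfectoid group by Lemma~\ref{perfectoid tilde-limit is perfectoid group in a functorial way}.

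It remains to identify $\tilde A$ with the universal cover and to deduce $A^\diamond=\tilde A/TA$. By Proposition~\ref{SW Proposition 2.4.5}, $\tilde A(R,R^+)=\varprojlim_N A(R,R^+)$ for perfectoid $(R,R^+)$, so $\ker(\tilde A\to A)=\varprojlim_N A[N]=TA=\pi_1(A,x)$, which by Lemma~\ref{l:pro-finite-perfectoid-spaces} is the profinite perfectoid group $\underline{TA}$; being a pro-finite-\'etale cover, $\tilde A\to A$ is in particular a pro-\'etale $\underline{TA}$-torsor. The subgroups $N\cdot TA$ are cofinal among the open subgroups of $TA\cong\widehat{\Z}^{2\dim A}$, and the finite \'etale cover of $A$ attached to $N\cdot TA$ is $[N]\colon A\to A$; hence $\tilde A\sim\varprojlim_H A_H$, and the formal arguments establishing parts (2) and (3) of Corollary~\ref{c:universal-covers-of-curves} carry over verbatim. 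Finally, since $\tilde A\to A$ is a pro-\'etale $\underline{TA}$-torsor, the quotient $\tilde A/\underline{TA}$ as a pro-\'etale sheaf (equivalently, as a diamond) recovers $A^\diamond$---exactly as for the presentation $A^\diamond=A_\infty/T_pA$---giving $A^\diamond=\tilde A/TA=\tilde A/\pi_1(A,x)$.

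The main obstacle is the perfectoidness of the prime-to-$p$ cover. Unlike the $[p]$-tower, the transition maps $[m]$ do not factor through Frobenius modulo $\varpi$, so one cannot argue as in Lemma~\ref{tilde-limit exists and is perfectoid in the good reduction case}; it is essential to first pass to the perfectoid space $A_\infty$ and then exploit the stability of perfectoidness under (pro-)finite-\'etale covers. Once $\tilde A$ is in hand, parts (2) and (3) are purely formal consequences of the corresponding statements for curves, combined with the identification $\pi_1(A,x)=TA$.
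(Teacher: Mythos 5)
Your proposal is correct and takes essentially the same approach as the paper's: start from $A_\infty\sim\varprojlim_{[p]}A$ provided by Theorem~\ref{thm:main_thm_intro}, observe that the remaining prime-to-$p$ covers become pro-finite-\'etale over the perfectoid space $A_\infty$, and conclude perfectoidness from there. The paper presents a single proof covering curves and abelian varieties at once, working abstractly in $C_{\profet}$ with arbitrary open subgroups $H\leq\pi_1$ and citing the fact that pro-finite-\'etale covers of perfectoid objects are perfectoid; yours is the direct specialisation to the cofinal subsystem of $[N]$-multiplications, which is a cleaner packaging for the abelian case.

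One step should be tightened. You assert that $[m]\colon A_\infty\to A_\infty$ is finite \'etale because ``it is an isogeny of perfectoid groups with finite constant kernel,'' but that implication is not among the facts available to you in the perfectoid setting. The clean argument --- and the one the paper implicitly runs via the isomorphism $C_{G_{n+1}\cap H}\cong C_{G_n\cap H}\times_{C_{G_n}}C_{G_{n+1}}$ --- is to note that for $\gcd(m,p)=1$ the square
\begin{center}
\begin{tikzcd}
A \arrow[r,"{[p^n]}"] \arrow[d,"{[m]}"'] & A \arrow[d,"{[m]}"]\\
A \arrow[r,"{[p^n]}"] & A
\end{tikzcd}
\end{center}
is Cartesian: the comparison map $z\mapsto([p^n]z,[m]z)$ into the fibre product has kernel $A[p^n]\cap A[m]=0$ and is surjective since $[m]$ is invertible on $A[p^n]$. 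Passing to the limit over $n$, the map $[m]\colon A_\infty\to A_\infty$ is then the pullback of the finite \'etale morphism $[m]\colon A\to A$ along $A_\infty\to A$, hence finite \'etale. With this substitution the rest of your argument goes through.
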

\begin{proof}[Proof of Cor.~\ref{c:universal-covers-of-curves} and Cor.~\ref{c:universal-covers-of-abelian-varieties}]
To ease notation, let us abbreviate $G:=\pi_1(C,x)$.
	
We construct $\tilde{C}$ in two steps. 
The choice of the base point $x$ gives an embedding $\iota\colon C\rightarrow A$ of $C$ into its Jacobian. Let $C_n$ be the pullback of $C$ along the map $[p^n]\colon A\rightarrow A$. Combining our main theorem with \cite[Lemma II.2.2]{torsion}, we can pull back perfectoid tilde-limits along closed immersions and hence get a perfectoid space $C_\infty\sim \varprojlim C_n$ with a Zariski closed embedding $C_\infty\rightarrow A_\infty$.

We now use the fact that pro-\'etale covers of perfectoid spaces are again perfectoid to construct a perfectoid cover $\tilde{C}$ of $C_\infty$ that packages up the entire \'etale fundamental group of $C$. As we are assuming that $K$ has characteristic 0, the maps $[p^n]\colon A\rightarrow A$ are finite \'etale, so the induced covers $C_n\rightarrow C$ are finite \'etale. The inverse system 
\[\cdots \rightarrow C_n \rightarrow \cdots \rightarrow C_1\rightarrow C\] 
therefore corresponds to a chain of subgroups
\[\cdots < G_n <\cdots < G_1 < G=\pi_1(C,x).\]

For any open subgroup $H$ of $G$ corresponding to the finite \'etale cover $C_H\rightarrow C$, we have a decreasing sequence of positive integers 
\[\cdots \leq [G_n:G_n\cap H] \leq \cdots \leq [G_1:G_1\cap H]\leq [G:G\cap H]\]
which stabilises for $n\gg 0$.
So there is an integer $d$ such that for all $n\gg0 $, we have $[G_n:G_n\cap H]=d$. Translating back to the language of finite \'etale covers, we see that for such $n$, the map
\[C_{G_{n+1}\cap H}\rightarrow C_{G_n\cap H}\times_{C_{G_n}} C_{G_{n+1}}\]
coming from the universal property of the fibre product is an isomorphism: Both spaces are finite \'etale covers of $C_{G_{n+1}}$ of degree $d$, so the map is a finite \'etale cover of degree 1. This implies that the natural morphism $\varprojlim C_{G_n \cap H}\rightarrow \varprojlim C_{G_n}$ of objects of $C_{\profet}$ is finite \'etale in the sense of \cite[Definition 3.9]{p-adic_Hodge}. To simplify notation, we write this morphism as $C_{H,\infty}\rightarrow C_\infty$ (via Lemma~\ref{l:profet-perf-tilde-limit-fully-faithful}, one can also think of this as the corresponding map of perfectoid spaces).

We can now rewrite in $C_{\profet}$:
\[\displaystyle\varprojlim_{H\rightarrow 1} C_H=\varprojlim_{H\rightarrow 1}\displaystyle\varprojlim_{n\rightarrow \infty} C_{G_n\cap H}=\varprojlim_{H\rightarrow 1}C_{H,\infty}.\]
As the $C_{H,\infty}$ have compatible finite \'etale maps to $C_\infty$, we obtain a morphism in $C_{\profet}$
\[\varprojlim_{H\rightarrow 1}C_{H,\infty}\rightarrow C_\infty.\]

By \cite[Lemma 4.6]{p-adic_Hodge}, pro-finite-\'etale covers of perfectoid objects are again perfectoid, giving us the desired perfectoid space 
\[ \tilde{C}\sim\varprojlim_{H\rightarrow 1} C_H.\]
This completes the construction of $\tilde{C}$, and thus proves part (1).

To see part (2), we note that we can write $G=\varprojlim_N G/N$ where $N$ ranges through the normal open subgroups. These are precisely the subgroups for which $C_N\to C$ is already a finite \'etale $G/N$-torsor. Concretely, this means that the following natural morphism is already an isomorphism:
\[G/N\times_C C_N\to C_N\times_CC_N.\]
We note that we also have $\tilde{C} \sim \varprojlim_NC_N$, as normal open subgroups are cofinal in the inverse system of all open subgroups. In the limit, this shows that $\tilde{C}$ is a pro-finite-\'etale $G$-torsor.

To see that $F(X)=\Hom_C(\tilde{C},X)$, we recall that for any Galois cover $C_N\to C$ with a finite Galois map $C_N\to X$, we have $F(X)=\Hom_C(C_N,X)$. It therefore suffices to see that
\[\Hom_C(\tilde{C},X)=\varinjlim_{N}\Hom_C(C_N,X).\]
But this follows from Lemma~\ref{l:profet-perf-tilde-limit-fully-faithful}.

For (3), write $S=F(X)$, then it suffices to prove that the natural morphism \[\rho:\underline{S}\times\tilde{C}\to X\]
is a pro-finite-\'etale $G$-torsor for the antidiagonal action. Indeed, this implies that $X$ is the categorical quotient by the action of $G$: This is because the torsor property implies $\O_X = (\rho_{\ast}\O_{\underline{S}\times\tilde{C}})^{G}$ by combining \cite[{Lemma~2.24}]{CHJ} and \cite[{Theorem~8.2.3}]{KedlayaLiu-II}.

 Since connected components of $X$ correspond to $G$-orbits of $S$, we may reduce to the case where $X$ is connected and $G$ acts transitively on $\underline{S}$. By writing $X$ as a system of finite \'etale covers, we may further reduce to the case that $S$ is finite.  Fix $s\in S$ and let $H\subseteq G$ be the stabiliser of $s$, then $X=C_H$. It now suffices to show that for any normal open subgroup $N\subseteq G$ with $N\subseteq H$, the natural morphism
\[G/H\times C_N\to C_H\]
is a $G/N$-torsor, as the desired result will follow in the limit $N\to 1$. But this follows by Galois descent from the diagram
\begin{center}
\begin{tikzcd}
	G/H\times C_N \arrow[r]           & C_H           \\
	G/N \times C_N\arrow[r] \arrow[u] & C_N \arrow[u]
\end{tikzcd}
\end{center}
which is Cartesian as $C_N\to C_H$ is a finite \'etale $H/N$-torsor.
\end{proof}

	
		\appendix
	\section{Fibre bundles of adic spaces}\label{s:appendix}
	In this appendix we review the theory of fibre bundles in the setting of analytic adic spaces. We will always implicitly assume that finite products of the adic spaces we work with exist: This is for instance the case when we work with rigid or perfectoid spaces, the cases we are most interested in.

	\begin{notation}
		In the following, if $\pi\colon E\rightarrow B$ is a morphism of adic spaces, then for an open subspace $U\subseteq B$ we denote $E|_U:=\pi^{-1}(U)\subseteq E$.
	\end{notation}
	\begin{definition}\label{definition principal T-bundle}
		Let $T$ be an adic group. Throughout we will assume that $T$ is commutative. 
		
		Let $F$ be an adic space with an action $m\colon T\times F\rightarrow F$.
		A morphism $\pi\colon E\rightarrow B$ of adic spaces is called a \textbf{fibre bundle with fibre $F$ and structure group $T$} if there is a cover $\mathfrak U$ of $B$ of open subspaces $U_i\subseteq B$ with isomorphisms $\varphi_i:F\times U_i \xrightarrow{\sim} E|_{U_i}$ which satisfy the following conditions:
		\begin{enumerate}[label=(\alph*)]
			\item For every $U_i\in \mathfrak U$, the following diagram commutes:
			\begin{center}
				\begin{tikzcd}
					F\times U_{i} \arrow[r, "\varphi_i"] \arrow[rd, "p_2"] & E|_{U_{i}} \arrow[d, "\pi"] & \phantom{T\times U_{ij}} \\
					& U_{i} & 
				\end{tikzcd}
			\end{center}
			\item For every two $U_i,U_j\in \mathfrak U$ with intersection $U_{ij}$, the commutative diagram
			\begin{center}
				\begin{tikzcd}
					F\times U_{ij} \arrow[r, "\varphi_i"] \arrow[rd, "p_2"] & E|_{U_{ij}} \arrow[d, "\pi"] & F\times U_{ij} \arrow[ld, "p_2"] \arrow[l, "\varphi_j"'] \\
					& U_{ij} & 
				\end{tikzcd}
			\end{center}
			produces an isomorphism $\phi_{ij}:=\varphi_j^{-1}\circ\varphi_i\colon F\times U_{ij}\rightarrow F\times U_{ij}$ with the following property: There exists a morphism $\psi_{ij}:U_{ij}\rightarrow T$ such that $\phi_{ij}$ coincides with the composite
			\[F\times U_{ij} \xrightarrow{\psi_{ij}\times \operatorname{id}\times\operatorname{id}} T\times F\times U_{ij}\xrightarrow{m\times \operatorname{id}} F\times U_{ij}.\]
		\end{enumerate}
	\end{definition}
	\begin{definition}
		When $F=T$ with the action on itself by left multiplication, then a fibre bundle $\pi\colon E\rightarrow B$ with fibre $T$ and structure group $T$ is called a \textbf{$T$-torsor}.
	\end{definition}
	
	\begin{example}
		The short exact sequence $0\rightarrow \overline{T}\rightarrow \overline{E}\xrightarrow{\pi} \overline{B}\rightarrow 0$ from \S\ref{Raynaud extensions as principal bundles of formal and rigid spaces} yields a $T$-torsor $\overline{E}\xrightarrow{\pi} \overline{B}$ by Lemma~\ref{formal Raynaud sequence is locally split}. Moreover, for any open subspace $U\subseteq \overline{B}$, the map $E|_U\rightarrow U$ is a $T$-torsor.
	\end{example}
	
	The $\phi_{ij}$ from condition (b) are determined by the maps $\psi_{ij}\colon U_{ij}\rightarrow T$. By glueing, one sees:
	\begin{lemma}\label{equivalent characterisation of principal $T$-bundle}
		Suppose we are given adic spaces $F$ and $B$ and an adic group $T$ with an action on $F$. Then fibre bundles $\pi\colon E\rightarrow B$ with fibre $F$ and structure group $T$ are equivalent to the data (up to refinement) of a cover $\mathfrak U$ of $B$ by open subspaces and morphisms $\psi_{ij}\colon U_{ij}\rightarrow T$ for all $U_i,U_j\in \mathfrak U$ that satisfy the cocycle condition $\psi_{jk}|_{U_{ijk}}\cdot \psi_{ij}|_{U_{ijk}}=\psi_{ik}|_{U_{ijk}}$ on the intersection $U_{ijk}:=U_i\cap U_j\cap U_k$.
	\end{lemma}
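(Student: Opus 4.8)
The plan is to set up the standard dictionary between fibre bundles and gluing cocycles, in both directions. First I would treat the direction ``bundle $\Rightarrow$ cocycle''. Given a fibre bundle $\pi\colon E\to B$ with fibre $F$ and structure group $T$, Definition~\ref{definition principal T-bundle} already supplies a cover $\mathfrak U=\{U_i\}$ and trivialisations $\varphi_i\colon F\times U_i\xrightarrow{\sim}E|_{U_i}$, and condition~(b) supplies for each pair a morphism $\psi_{ij}\colon U_{ij}\to T$ realising $\phi_{ij}=\varphi_j^{-1}\circ\varphi_i$ through the action $m$. On a triple overlap $U_{ijk}$ one has $\phi_{jk}\circ\phi_{ij}=\varphi_k^{-1}\varphi_j\varphi_j^{-1}\varphi_i=\varphi_k^{-1}\varphi_i=\phi_{ik}$; unwinding the defining formula for the $\phi$'s and using that $m$ is a $T$-action shows $\phi_{ik}$ is also induced by $\psi_{jk}\cdot\psi_{ij}$, and hence, provided $T$ acts faithfully on $F$ — which holds in all the situations we use, in particular whenever $\pi$ is a $T$-torsor — that $\psi_{jk}|_{U_{ijk}}\cdot\psi_{ij}|_{U_{ijk}}=\psi_{ik}|_{U_{ijk}}$. (When the action is not faithful one simply records the $\psi_{ij}$ as part of the bundle datum.)

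For the converse ``cocycle $\Rightarrow$ bundle'', suppose we are given $\mathfrak U=\{U_i\}$ and morphisms $\psi_{ij}\colon U_{ij}\to T$ satisfying the cocycle condition. Set $\phi_{ij}\colon F\times U_{ij}\to F\times U_{ij}$ equal to the composite $F\times U_{ij}\xrightarrow{\psi_{ij}\times\id\times\id}T\times F\times U_{ij}\xrightarrow{m\times\id}F\times U_{ij}$. Taking $i=j=k$ in the cocycle condition gives $\psi_{ii}=e$, hence $\phi_{ii}=\id$; taking $k=i$ and using that $m$ is a group action gives $\phi_{ji}\circ\phi_{ij}=\id$, so each $\phi_{ij}$ is an isomorphism with inverse $\phi_{ji}$; and on triple overlaps the cocycle condition together with the action axiom yields $\phi_{jk}\circ\phi_{ij}=\phi_{ik}$. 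Note these three identities follow directly from the action axioms and need no faithfulness. Since finite products of the adic spaces in question exist by assumption, each $F\times U_i$ is an adic space, and since adic spaces can be glued along open covers, the datum $(\{F\times U_i\},\{\phi_{ij}\})$ glues to an adic space $E$ with open immersions $\varphi_i\colon F\times U_i\hookrightarrow E$ whose overlaps are identified via the $\phi_{ij}$. The second projections $F\times U_i\to U_i\subseteq B$ agree on overlaps because each $\phi_{ij}$ commutes with the projection to $U_{ij}$, so they glue to a morphism $\pi\colon E\to B$ with $\pi^{-1}(U_i)=E|_{U_i}$; conditions~(a) and~(b) of Definition~\ref{definition principal T-bundle} then hold by construction, with the given $\psi_{ij}$.

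Finally I would check that the two constructions are mutually inverse up to refinement: forming the bundle from a cocycle and then re-extracting the cocycle returns, on the same cover, the same $\psi_{ij}$; extracting a cocycle from a bundle and re-gluing returns a bundle canonically isomorphic to the original over the same cover; and any two fibre bundles, resp.\ cocycle data, that become equal after passage to a common refinement of their covers are identified, which is exactly the ``up to refinement'' of the statement. The only genuine subtlety — everything else being bookkeeping about gluing adic spaces along an open cover — is the one already flagged: a given $\phi_{ij}$ determines its $\psi_{ij}$ only when the $T$-action on $F$ is faithful. Since this holds for $T$-torsors and for the change-of-fibre constructions used in \S\ref{Raynaud extensions as principal bundles of formal and rigid spaces}, there is no loss in practice, and in general one keeps the $\psi_{ij}$ as part of the structure.
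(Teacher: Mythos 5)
Your proof is correct and is the standard gluing dictionary; the paper itself gives no argument beyond the remark ``By glueing, one sees:'', so your write-up simply fills in exactly what was left implicit. Your flag about faithfulness is a genuine point the paper glosses over: when the $T$-action on $F$ is not faithful, the $\phi_{ij}$ do not determine the $\psi_{ij}$, and the ``equivalence'' as stated only holds if one either assumes faithfulness (which covers the $T$-torsor case the paper actually uses, and hence Definition~\ref{definition of Borel construction}) or takes the $\psi_{ij}$ to be part of the bundle datum — which is how you correctly resolve it.
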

	\begin{lemma}
		Let $E\rightarrow B$ be a fibre bundle with fibre $F$ and structure group $T$. Then the natural $T$-action on $F\times U_{i}$ for each $i$ via the first factor glue to a natural $T$-action on $E$.
	\end{lemma}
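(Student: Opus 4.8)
The plan is to transport the first-factor $T$-action on each $F\times U_i$ through the trivialisation $\varphi_i$ to obtain an action $m_i\colon T\times E|_{U_i}\to E|_{U_i}$, and then to check that the $m_i$ agree on overlaps so that they glue to a global action $m_E\colon T\times E\to E$. Concretely, for each $U_i\in\mathfrak U$ define $m_i$ as the composite
\[ T\times E|_{U_i}\xrightarrow{\operatorname{id}\times\varphi_i^{-1}} T\times F\times U_i\xrightarrow{m\times\operatorname{id}} F\times U_i\xrightarrow{\varphi_i} E|_{U_i}.\]

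The heart of the argument is to show that $m_i$ and $m_j$ restrict to the same morphism $T\times E|_{U_{ij}}\to E|_{U_{ij}}$. Unwinding the definitions, this amounts to showing that the transition isomorphism $\phi_{ij}=\varphi_j^{-1}\circ\varphi_i\colon F\times U_{ij}\to F\times U_{ij}$ is equivariant for the first-factor $T$-action, i.e. commutes with $m\times\operatorname{id}$. By condition (b) of Definition~\ref{definition principal T-bundle}, $\phi_{ij}$ is the composite of $m$ with $\psi_{ij}\colon U_{ij}\to T$ in the first variable, that is, ``multiplication by $\psi_{ij}$'' along the fibre. Equivariance then follows from the hypothesis that $T$ is commutative together with the associativity of the action $m$: informally, $t\cdot(\psi_{ij}(u)\cdot f) = (t\psi_{ij}(u))\cdot f = (\psi_{ij}(u)t)\cdot f = \psi_{ij}(u)\cdot(t\cdot f)$, which is made precise by a diagram chase using the associativity and commutativity constraints for $m$ and the group law of $T$. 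This is the only place the commutativity of $T$ enters, and I expect it to be the main (though modest) obstacle — really just a careful diagram chase.

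Once the $m_i$ agree on overlaps they glue: the open subspaces $T\times E|_{U_i}$ form a cover of $T\times E$, the $m_i$ are compatible on pairwise intersections by the previous step, hence compatible on triple intersections as well, so the gluing lemma for morphisms of adic spaces yields a unique $m_E\colon T\times E\to E$ restricting to each $m_i$. It remains to verify the action axioms: compatibility with the multiplication $T\times T\times E\to E$ and with the unit section $e\colon\operatorname{Spa}(K,\O_K)\to T$. Each is an equality of morphisms out of $T\times T\times E$, respectively $E$, that may be checked after restricting to the cover by the $T\times T\times E|_{U_i}$, respectively the $E|_{U_i}$, where via $\varphi_i$ it reduces to the corresponding axiom for the first-factor $T$-action on $F\times U_i$, which holds since $m$ is an action of $T$ on $F$. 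The qualifier ``natural'' in the statement is then just the observation that $m_E$ restricts to $m_i$ on each $E|_{U_i}$ by construction.
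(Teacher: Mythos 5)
Your proof is correct and is the fully spelled-out version of the paper's one-line argument, which simply says the claim is ``immediate from condition (b).'' You correctly identify that the substance is the equivariance of the transition maps $\phi_{ij}$ for the first-factor $T$-action, which follows from condition (b) plus the standing commutativity assumption on $T$, after which gluing and the action axioms are routine.
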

	\begin{proof}
		This is immediate from condition (b).
	\end{proof}

	\begin{definition} \label{definition of Borel construction}
		Let $\pi\colon E\rightarrow B$ be a $T$-torsor. Let $F$ be an adic space with an action by $T$. Since the data in Lemma~\ref{equivalent characterisation of principal $T$-bundle} are completely independent of the fibre, the morphisms $\psi_{ij}\colon U_{ij}\rightarrow T$ by Lemma~\ref{equivalent characterisation of principal $T$-bundle} define a fibre bundle with fibre $F$ and structure group $T$ that we denote by $F\times^T E$. This is called the \textbf{associated bundle} or Borel--Weil construction.
	\end{definition}

\bibliographystyle{acm}
\bibliography{Arizona}

\begin{thebibliography}{10}

\bibitem{Bhatt}
{\sc Bhatt, B.}
\newblock The {H}odge--{T}ate decomposition via perfectoid spaces.
\newblock In {\em {P}erfectoid {S}paces: {L}ectures from the 2017 {A}rizona
  {W}inter {S}chool}, B.~Cais, Ed., vol.~242 of {\em Mathematical Surveys and
  Monographs}. American Mathematical Society, 2019.

\bibitem{Bosch-Lut}
{\sc Bosch, S., and L\"{u}tkebohmert, W.}
\newblock Stable reduction and uniformization of abelian varieties. {II}.
\newblock {\em Invent. Math. 78}, 2 (1984), 257--297.

\bibitem{BL}
{\sc Bosch, S., and L\"{u}tkebohmert, W.}
\newblock Degenerating abelian varieties.
\newblock {\em Topology 30}, 4 (1991), 653--698.

\bibitem{CHJ}
{\sc Chojecki, P., Hansen, D., and Johansson, C.}
\newblock {O}verconvergent modular forms and perfectoid {S}himura curves.
\newblock {\em Doc. Math. 22\/} (2017), 191--262.

\bibitem{Hansen_quotients}
{\sc Hansen, D.}
\newblock Quotients of adic spaces by finite groups.
\newblock {\em Math. Res. Letters\/}.
\newblock To appear.

\bibitem{Hansen-blog}
{\sc Hansen, D.}
\newblock Perfectoid universal covers of curves.
\newblock Blog entry,
  \url{https://arithmetica.wordpress.com/2015/09/27/perfectoid-universal-covers-of-curves/},
  2015.

\bibitem{huber2013etale}
{\sc Huber, R.}
\newblock {\em \'{E}tale cohomology of rigid analytic varieties and adic
  spaces}.
\newblock Aspects of Mathematics, E30. Friedr. Vieweg \& Sohn, Braunschweig,
  1996.

\bibitem{KedlayaLiu-II}
{\sc {Kedlaya}, K.~S., and {Liu}, R.}
\newblock {Relative {$p$}-adic Hodge theory, II: Imperfect period rings}.
\newblock {\em Preprint, arXiv:1602.06899\/} (2016).

\bibitem{LutRiemann}
{\sc L{\"{u}}tkebohmert, W.}
\newblock Riemann's existence problem for a $p$-adic field.
\newblock {\em Invent Math 111\/} (1993), 309--330.

\bibitem{Lut-survey}
{\sc L{\"{u}}tkebohmert, W.}
\newblock From {T}ate's elliptic curve to abeloid varieties.
\newblock {\em Pure Appl. Math. Q. 5}, 4, Special Issue: In honor of John Tate.
  Part 1 (2009), 1385--1427.

\bibitem{Lut}
{\sc L{\"{u}}tkebohmert, W.}
\newblock {\em Rigid geometry of curves and their {J}acobians}, vol.~61 of {\em
  Ergebnisse der Mathematik und ihrer Grenzgebiete. 3. Folge. A Series of
  Modern Surveys in Mathematics}.
\newblock Springer, Cham, 2016.

\bibitem{Pilloni-Stroh}
{\sc Pilloni, V., and Stroh, B.}
\newblock Cohomologie coh\'{e}rente et repr\'{e}sentations {G}aloisiennes.
\newblock {\em Ann. Math. Qu\'{e}. 40}, 1 (2016), 167--202.

\bibitem{Reinecke}
{\sc Reinecke, E.}
\newblock The cohomology of the moduli space of curves at infinite level.
\newblock {\em Preprint, arXiv:1911.07392\/} (2019).

\bibitem{perfectoid}
{\sc Scholze, P.}
\newblock Perfectoid spaces.
\newblock {\em Publ. Math. Inst. Hautes \'{E}tudes Sci. 116\/} (2012),
  245--313.

\bibitem{p-adic_Hodge}
{\sc Scholze, P.}
\newblock {$p$}-adic {H}odge theory for rigid-analytic varieties.
\newblock {\em Forum Math. Pi 1\/} (2013), e1, 77.

\bibitem{survey}
{\sc Scholze, P.}
\newblock Perfectoid spaces: a survey.
\newblock In {\em Current developments in mathematics 2012}. Int. Press,
  Somerville, MA, 2013, pp.~193--227.

\bibitem{scholzeICMproceedings}
{\sc Scholze, P.}
\newblock Perfectoid spaces and their applications.
\newblock {\em Proceedings of the ICM\/} (2014).

\bibitem{torsion}
{\sc Scholze, P.}
\newblock On torsion in the cohomology of locally symmetric varieties.
\newblock {\em Ann. of Math. (2) 182}, 3 (2015), 945--1066.

\bibitem{etale_cohomology_of_diamonds}
{\sc Scholze, P.}
\newblock \'{E}tale cohomology of diamonds.
\newblock {\em Preprint, arXiv:1709.0734\/} (2017).
\newblock Preprint.

\bibitem{SW}
{\sc Scholze, P., and Weinstein, J.}
\newblock Moduli of $p$-divisible groups.
\newblock {\em Camb. J. Math. 1}, 2 (2013), 145--237.

\bibitem{berkeley}
{\sc Scholze, P., and Weinstein, J.}
\newblock Berkeley lectures on $p$-adic geometry, revised version.
\newblock \url{http://www.math.uni-bonn.de/people/scholze/Berkeley.pdf}, 2017.

\end{thebibliography}

\end{document}